\newtheorem{defi}{Definition}[section]
\newtheorem{remark}[defi]{Remark}
\newtheorem{thm}[defi]{Theorem}
\newtheorem{lemma}[defi]{Lemma}
\newtheorem{corollary}[defi]{Corollary}
\newtheorem{prop}[defi]{Proposition}
\newcommand{\bijar}[1][]{%
 \ar[#1]
 \ar@<0.7ex>@{}[#1]|-*=0[@]{\sim}} 
\newcommand{\wed}{\operatorname{wed}}
\title{Blocks with a generalized quaternion defect group and three simple modules over a $2$-adic ring }
\author{Florian Eisele}
\begin{document}

\maketitle

\begin{abstract}
	We show that two blocks of generalized quaternion defect with three simple modules over a sufficiently large 
	$2$-adic ring $\OO$ are Morita-equivalent if and only if the corresponding blocks over the residue field of $\OO$ are Morita-equivalent. As a corollary we show that any two blocks defined over $\OO$ with three simple modules
	and the same generalized quaternion defect group
	are derived equivalent.
\end{abstract}

\section{Introduction}

Let $(K,\OO,k)$ be a $2$-modular system.  We assume that $K$ is complete and that $k$ is algebraically closed. The aim of this article is to prove the following result:
\begin{thm}\label{thm_main}
	Assume $3 \leq n \in \N$ and $K\supseteq \Q(\zeta_{2^{n-1}}+\zeta_{2^{n-1}}^{-1})$, where $\zeta_{2^{n-1}}$ denotes a $2^{n-1}$-th root of unity.
	Let $\Lambda$ be a block of $\OO G$ and $\Gamma$ be a block of $\OO H$ for finite groups $G$ and $H$. If the defect groups of 
	$\Lambda$ and $\Gamma$ are both isomorphic to the generalized quaternion group $Q_{2^n}$ and  $\Lambda$ and $\Gamma$ both have three isomorphism classes of simple modules, then the following hold:
	\begin{enumerate}
		\item $\Lambda$ and $\Gamma$ are Morita-equivalent if and only if the $k$-algebras $k\otimes \Lambda$ and $k\otimes \Gamma$ are Morita-equivalent.
		\item $\Lambda$ and $\Gamma$ are derived equivalent.
	\end{enumerate} 
\end{thm}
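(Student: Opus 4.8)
The plan is to prove part (1) by reducing everything to a classification of $\OO$-orders, and then to deduce part (2) from part (1) together with the known derived equivalences over $k$.

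Both $\Lambda$ and $\Gamma$ are Morita-equivalent to their basic algebras, and Morita equivalence of $\OO$-algebras reduces modulo the maximal ideal of $\OO$ to Morita equivalence of $k$-algebras; hence in (1) the implication ``$\Rightarrow$'' is immediate and only ``$\Leftarrow$'' needs proof. For that it suffices to establish: \emph{if $\Lambda$ is a block over $\OO$ with defect group $Q_{2^n}$ and three simple modules, then its basic algebra is determined up to isomorphism by the basic algebra of $k\otimes\Lambda$.} So the first step is to recall, from the classification of tame blocks (Erdmann's algebras of quaternion type), that the basic algebra of $k\otimes\Lambda$ lies on a short explicit list depending only on $n$. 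I would also record here that the decomposition matrix of such a block is, up to permutation of rows and columns, the unique $7\times 3$ matrix compatible with a $Q_{2^n}$ defect group and $l(B)=3$ (so that $k(B)=7$), and that the hypothesis $K\supseteq\Q(\zeta_{2^{n-1}}+\zeta_{2^{n-1}}^{-1})$ guarantees — using that $k$ is algebraically closed, so $K$ already contains the maximal unramified extension of $\Q_2$ — that $K\otimes\Lambda$ is split, i.e. a product of matrix algebras $\prod_{i=1}^{7}\mathrm{Mat}_{n_i}(K)$ with the $n_i$ read off from the defect group and the decomposition matrix.

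The heart of the argument is then a classification of the $\OO$-orders that can occur: I would show that, for each basic $k$-algebra $A$ on the list, there is, up to isomorphism of $\OO$-orders, exactly one symmetric $\OO$-order $\Lambda_A$ which is a full $\OO$-lattice in the prescribed split semisimple $K$-algebra, reduces to $A$ modulo the maximal ideal, and realizes the prescribed decomposition map. Concretely, I would present the basic algebra of $\Lambda$ by generators and relations lifting Erdmann's presentation of $A$, now with structure constants in $\OO$; the decomposition matrix pins down which ordinary characters each projective indecomposable lattice sees and with what multiplicities, and the existence of a symmetrizing form forces enough relations among the structure constants to cut the naive moduli of lifts down to a point. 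The expected main obstacle is exactly this step: a priori Erdmann's presentation involves a scalar parameter, and over $\OO$ one must rule out a genuine family of non-isomorphic lifts. I would handle this by exhibiting, for any two candidate lifts, an explicit $\OO$-algebra isomorphism between them, obtained by rescaling the offending parameter by a unit of $\OO$ (available because $\OO$ is complete local with large residue field), or, where the parameter is rigid, by showing that all but one value violates either self-duality of the order or integrality of the decomposition map. This is where the generalized quaternion case genuinely differs from the better-understood dihedral and semidihedral situations, because of the $2$-torsion in the defect group and the relations of length $2^{n-2}$, so I expect the bookkeeping here to be the most delicate part of the proof.

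Granting (1), part (2) follows by a lifting argument. By (1) it suffices to show that the finitely many $\OO$-Morita classes occurring for a fixed $n$ are pairwise derived equivalent. Over $k$ this is known: the corresponding basic algebras are algebras of quaternion type with three simple modules and the same Cartan determinant, and these were shown (by Holm, building on Erdmann) to be derived equivalent via explicit two-term tilting complexes. I would then lift these derived equivalences to $\OO$: the relevant tilting complexes can be taken to consist of $p$-permutation bimodules, so that Rickard's theorem on splendid equivalences lifts them from $k$ to $\OO$; alternatively one lifts a tilting complex directly and observes that the obstruction to lifting its endomorphism algebra vanishes, since by part (1) the only candidate for that algebra over $\OO$ is the basic algebra of the target block, which we already know exists. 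Combining this with part (1) yields that any two blocks as in the statement are derived equivalent, completing the proof.
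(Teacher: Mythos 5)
Your high-level plan (reduce to basic algebras, classify $\OO$-lifts, lift derived equivalences) matches the paper's strategy, but there are two substantive gaps and one numerical slip.

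The most serious gap is that you have no mechanism to pin down the lift. You propose to lift Erdmann's presentation to $\OO$ and argue that the decomposition matrix plus symmetry collapse the moduli of lifts to a point, possibly after rescaling a unit. The paper does something essentially different and stronger: its uniqueness theorem (Theorem~\ref{thm_unique_lift}) does \emph{not} prove that a symmetric order reducing to $Q(3\mathcal K)^c$ with the given decomposition matrix is unique; it proves uniqueness \emph{given that the center is prescribed as a literal subring of} $Z(A)$, where $A$ is the fixed split semisimple $K$-algebra. The proof uses $e_i\Lambda e_i = e_i Z(\Lambda)e_i$ (a consequence of Proposition~\ref{prop_epi_center}) to recover the diagonal blocks exactly from the given center, and then uses cyclicity of the off-diagonal pieces together with symmetry to recover the whole order up to conjugation. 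To use this for blocks, the paper then needs the perfect isometry of Cabanes--Picaronny to furnish an isomorphism of centers, and the harder point — all of Section~5 and Section~6 — is arranging that this isomorphism is compatible with the chosen Wedderburn embeddings. That requires computing the group of self-isometries preserving the projective lattice (Proposition~\ref{prop_self_iso}), showing enough of them are perfect (Proposition~\ref{prop_self_perfect_iso}, Lemma~\ref{lemma_new_selfiso}), and combining them in Corollary~\ref{corollary_iso_factorization} and Proposition~\ref{prop_wed_emb_perf_iso}. Your proposal never addresses this at all; as written, you would only know the centers are abstractly isomorphic, which is explicitly noted in the paper to be insufficient. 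Without this you cannot conclude uniqueness.

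A second gap is in the transfer between the three Morita classes. You propose to redo the classification of lifts for each of $Q(3\mathcal A)_2^c$, $Q(3\mathcal B)^c$, $Q(3\mathcal K)^c$ separately. The paper instead proves uniqueness only for $Q(3\mathcal K)^c$ (where the $\SL_2(q)$, $q\equiv 3\bmod 4$, model is available) and then transfers it via Rickard's lifting theorem along Holm's two-term tilting complexes (Propositions~\ref{prop_tilted_admissible} and~\ref{prop_lift_transfer}). This transfer is itself delicate: it needs the notion of \emph{admissible} lift (Definition~\ref{defi_admissible}) because the endomorphism ring of a lifted tilting complex is not a priori a block, and it needs Corollary~\ref{corollary_full_permutation_auto} (certain automorphisms of the $k$-algebra lift to the $\OO$-order) as an input hypothesis of Proposition~\ref{prop_lift_transfer}. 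Your sketch for part (2) — invoking splendid equivalences or $p$-permutation bimodules — is not what the paper does and would require separate justification that such representatives exist for these tilting complexes; the paper avoids this entirely. Your alternative formulation, ``by part (1) the only candidate for that algebra over $\OO$ is the basic algebra of the target block,'' is essentially circular unless one has already set up the admissibility framework.

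Finally, a factual error: you assert the decomposition matrix is the unique $7\times 3$ matrix, i.e.\ $k(B)=7$. For defect group $Q_{2^n}$ the block has $5+2^{n-2}$ ordinary irreducible characters, which is $7$ only when $n=3$; for $n>3$ the matrix has more rows (and the number of those extra rows, $l=2^{n-2}-1$, is tracked carefully throughout the paper, e.g.\ in the self-isometry computation where $\operatorname{Sym}(\{7,\ldots,6+l\})$ appears).
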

The problem of classifying blocks of quaternion defect over $\OO$ arises naturally from a well-known classification result of Erdmann in \cite{TameClass}, where such blocks are classified over $k$. If the structure of these blocks is as narrowly restricted over $k$ as it is revealed to be in \cite{TameClass}, does it also follow that their structure is equally restricted over $\OO$? 
A priori it is not even clear that the number of Morita-equivalence classes of blocks over $\OO$ reducing to a single
Morita-equivalence class of blocks over $k$ is finite. However, Theorem \ref{thm_main} tells us that in our case this number is not merely finite, but in fact equal to one. In a way this was to be expected, considering the result of \cite{KessarQuat}, which proves the first part of Theorem \ref{thm_main} for one of the three possible Morita-equivalence classes of blocks of defect $Q_8$. The results of 
\cite{Olsson} concerning character values of blocks of quaternion defect and the later refinement of those results to perfect isometries between such blocks in \cite{CabanesPicaronny} also hint towards Theorem \ref{thm_main} being true, as they already show that all of the blocks over $\OO$ which are claimed to be isomorphic respectively derived equivalent in Theorem \ref{thm_main} do at the very least have isomorphic centers. Moreover, a derived equivalence between the principal block of 
$\OO \SL_2(q)$ and its Brauer correspondent has been shown to exist in \cite{GonardDiss}.

The proof of Theorem \ref{thm_main} builds mainly upon the results in \cite{TameClass} and \cite{CabanesPicaronny}.
For the case of defect $Q_8$ we also make use of the result in \cite{KessarQuat}, as it can be seen already from the 
decomposition matrix (see \eqref{decomp}) that this case is somewhat different as there are more symmetries. Here is a rough outline of the proof:
\begin{enumerate}
	\item First we prove that one of the algebras in Erdmann's classification \cite{TameClass}, namely the basic algebra of the principal block of $k\SL_2(q)$ for $q\equiv 3 \mod 4$, has a unique (or, rather, ``at most one'') 
	symmetric $\OO$-order with split semisimple $K$-span reducing to it, provided the center of the order is prescribed. Roughly speaking, 
	we do this by showing that the endomorphisms of the projective indecomposable modules 
	can be recovered as the projection of the (prescribed) center  to certain Wedderburn components. Moreover,
	the homomorphisms between different projective indecomposables viewed as bimodules over the endomorphism rings of those projective modules can also be obtained in this way. Which projections we have to take can be read off from the decomposition matrix, which we know. After that it comes down to conjugation and exploiting the symmetry of the order.
	\item The result of \cite{CabanesPicaronny} gives us a perfect isometry between any one of the blocks we are interested in and the principal block of $\OO \SL_2(q)$ for appropriately chosen $q\equiv 3 \mod 4$. In particular, this determines the center of the block in question, which is needed to apply the result of the first step.
	However, this is where a technical difficulty arises:
	above we assume that both the center of the block and the decomposition matrix are known. The $K$-span of the center is a semisimple commutative  $K$-algebra whose Wedderburn components can be associated with the rows of the decomposition matrix. In order to perform the above step properly we need to know which Wedderburn component of the center is associated with which row of the decomposition matrix. 
	The problem we are facing here boils down to the following: given a basic $\OO$-order $\Lambda$ which reduces to 
	the $k$-algebra treated in the first step as well as the basic order $\Gamma$ of the principal block of $\OO \SL_2(q)$, we have two separate isometries between the Grothendieck groups of their $K$-spans. On the one hand an isometry coming from the perfect isometry between blocks, which induces an isomorphism between the centers. On the other hand an isometry which preserves the decomposition matrix, and therefore induces an algebra isomorphism between the $K$-spans of $\Lambda$ and $\Gamma$. 
	We need to show that we can choose these two isometries equal to each other.
	The first isometry is determined up to perfect self-isometries of $\Gamma$, while 
	the second isometry is determined up to self-isometries preserving the decomposition matrix, which come from algebra automorphisms of the $K$-span of $\Gamma$. What both isometries have in common is that they map the sublattice of the Grothendieck group of $K\otimes \Lambda$ generated by the $K$-spans of projective $\Lambda$-modules onto the analogously defined sublattice of the Grothendieck group of $K\otimes \Gamma$. Therefore they differ
	by a self-isometry of the Grothendieck group of $K\otimes \Gamma$ which preserves the sublattice generated by the projective $\Gamma$-modules. Thus, in order to show that we can choose the above two isometries equal to one another, we merely have to show that the group of perfect self-isometries of $\Gamma$ and the group of 
	self-isometries of $\Gamma$ stabilizing the decomposition matrix taken together generate the group of self-isometries 
	of $\Gamma$ which preserve the lattice generated by projective modules. This is a fairly concrete problem, and proving this comes down to explicitly determining these groups of self-isometries.
	\item In the last step we generalize the result of the first two steps to all blocks of quaternion defect with three simple modules using the fact that each such block (over $k$) is derived equivalent to the principal block of $k\SL_2(q)$ for some $q\equiv 3 \mod 4$. The latter block is of course the same block we studied in the first two steps. Now we exploit the fact that a one-sided tilting complex $T$ over a $k$-algebra lifts to a tilting complex $\widehat T$ over an $\OO$-order (which we may choose freely) reducing to said $k$-algebra. The endomorphism ring of the lifted tilting complex $\widehat T$ is also reasonably well behaved, and in particular reduces to the endomorphism ring of $T$. Hence we can start with an arbitrary block (over $\OO$) of quaternion defect with three simple modules, to obtain, as the endomorphism ring of some tilting complex, an $\OO$-order reducing to the basic algebra of the principal block of $k\SL_2(q)$. This 
	$\OO$-order is then determined up to isomorphism by the first two steps. We can use this to determine the original block up to Morita-equivalence. Obviously there are a lot of technical pitfalls here that we just skimmed over in this short explanation.
\end{enumerate}
Of course there still remain some important open questions surrounding tame blocks, and blocks of quaternion defect in particular. For blocks with three simple modules one might ask whether the algebra $Q(3\mathcal B)^c$ from \cite{TameClass} actually occurs as a block of some group ring for each $c$. The answer to this, unfortunately, does not follow from our result.
Moreover, the classification of blocks of quaternion defect with two simple modules is still not entirely satisfactory, even over $k$. 

\section{The algebra $Q(3 \mathcal K)^c$} \label{section_q3k}

In this section we are going to look at a specific basic algebra of a block of defect $Q_{2^n}$ over an algebraically closed field of characteristic two, namely the algebra $Q(3 \mathcal K)^c$ from \cite{TameClass}, where $c=2^{n-2}$. This algebra is the basic algebra of the principal block of $k\SL_2(q)$ for $q\equiv 3 \mod 4$, where $q$ depends on $n$ (more about that later). The point of this section is to extract from the presentation of that algebra given in \cite{TameClass} the properties that we are going to need to prove Theorem \ref{thm_unique_lift}. Therefore a lot of what we are going to prove here will be elementary.

Let $\bar{\Lambda}$ denote the algebra $Q(3 \mathcal K)^c$ from Erdmann's classification (see the annex of \cite{TameClass}) with parameters $a=b=2$ and $c=2^{n-2}$ for some $3\leq n\in \N$. That is, $\bar{\Lambda}=kQ/I$, where $Q$ is the following quiver
\begin{equation}
Q=
\xygraph{
	!{<0cm,0cm>;<1cm,0cm>:<0cm,1cm>::}
	!{(0,1.25) }*+{\bullet_{1}}="a"
	!{(3,1.25) }*+{\bullet_{2}}="b"
	!{(1.5,-1.25) }*+{\bullet_{3}}="c"
	"b" :@/_/_{\gamma} "a"
	"a" :@/_/_{\beta} "b"
	"b" :@/_/_{\delta} "c"
	"c" :@/_/_{\eta} "b"
	"a" :@/_/_{\kappa} "c"
	"c" :@/_/_{\lambda} "a"
}
\end{equation}
and $I$ is the ideal generated by the following relations:
\begin{equation}\label{eq_def_rel}
\begin{array}{rcl}
\beta\delta&=& \kappa\lambda\kappa \\
\eta\gamma&=&\lambda\kappa\lambda \\
\delta\lambda&=&\gamma\beta\gamma \\ 
\kappa\eta&=&\beta\gamma\beta \\
\lambda\beta&=&(\eta\delta)^{2^{n-2}-1}\eta \\
\gamma\kappa&=&(\delta\eta)^{2^{n-2}-1}\delta \\
\gamma\beta\delta&=&0\\
\delta\eta\gamma&=&0\\
\lambda\kappa\eta&=&0
\end{array}
\end{equation}
\begin{prop}\label{prop_backbackforth}
	The following elements are contained in $I$:
	\begin{equation}
	\lambda\beta\gamma,\ \beta\gamma\kappa,\  \gamma\beta\delta,\ \eta\gamma\beta,\ \beta\delta\eta,\ \delta\eta\gamma,\ \eta\delta\lambda,\ \kappa\eta\delta,\ \delta\lambda\kappa,\ \lambda\kappa\eta,\ \kappa\lambda\beta,\ \gamma\kappa\lambda 
	\end{equation}
\end{prop}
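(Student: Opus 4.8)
The plan is to derive each of the twelve listed monomials from the given generators of $I$ in \eqref{eq_def_rel}, using only formal manipulation: multiplying a given relation on the left or right by an arrow, and substituting. Three of the twelve ($\gamma\beta\delta$, $\delta\eta\gamma$, $\lambda\kappa\eta$) are literally among the generators, so nothing is to be done there. The remaining nine come in symmetry-related families, and I would exploit the evident cyclic symmetry of the quiver and relations: there is a symmetry of order $3$ permuting the three ``two-cycles'' $(\beta,\gamma)$, $(\delta,\eta)$, $(\kappa,\lambda)$ cyclically (at least among the first four and the last three relations), and a ``flip'' symmetry exchanging the two arrows in each two-cycle while reversing composition order. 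So it suffices to produce one monomial from each orbit and then quote symmetry; concretely I would do $\lambda\beta\gamma$ and $\beta\gamma\kappa$ carefully and let the rest follow.

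For $\lambda\beta\gamma$: start from the generator $\lambda\beta=(\eta\delta)^{2^{n-2}-1}\eta$ and multiply on the right by $\gamma$, giving $\lambda\beta\gamma=(\eta\delta)^{2^{n-2}-1}\eta\gamma$. Now substitute $\eta\gamma=\lambda\kappa\lambda$, so the right-hand side becomes $(\eta\delta)^{2^{n-2}-2}\eta\delta\lambda\kappa\lambda$; since $\delta\lambda=\gamma\beta\gamma$, this contains the factor $\gamma\beta\gamma$, and after one more step one lands on a word containing $\gamma\beta\delta$ (or $\delta\eta\gamma$), which lies in $I$ by the generating relations — provided $n\geq 3$ so that the exponent $2^{n-2}-1\geq 1$ and there is at least one $\eta\delta$ to consume. (In the edge case $n=3$ the exponent is $1$ and the computation is even shorter.) For $\beta\gamma\kappa$: use $\gamma\kappa=(\delta\eta)^{2^{n-2}-1}\delta$, multiply on the left by $\beta$ to get $\beta\gamma\kappa=\beta(\delta\eta)^{2^{n-2}-1}\delta$, which contains $\beta\delta\eta$ if the exponent is $\geq 1$; and $\beta\delta\eta$ will itself have been shown to lie in $I$ (it is one of the twelve), so I would order the twelve so that this one is treated first via the relation $\beta\delta=\kappa\lambda\kappa$, giving $\beta\delta\eta=\kappa\lambda\kappa\eta$, and $\kappa\eta=\beta\gamma\beta$ reduces this to $\kappa\lambda\beta\gamma\beta$, which contains $\lambda\beta\gamma$ — already shown in $I$. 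So the logical order is: first $\lambda\beta\gamma$ (and its two symmetric siblings $\eta\delta\lambda$, $\delta\lambda\kappa$... wait, I must recheck which monomials are siblings), then the ``$\beta\delta\eta$-type'' ones, then the ``$\beta\gamma\kappa$-type'' ones.

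The main obstacle I anticipate is purely bookkeeping: getting the cyclic/flip symmetry exactly right so that ``by symmetry'' is genuinely justified and not merely suggestive — the relations in \eqref{eq_def_rel} are not perfectly symmetric under a naive rotation (the first four and the last three transform differently under the involution, and the three zero-relations sit in a single orbit), so I would state the symmetry carefully as an automorphism of $kQ$ fixing $I$ and verify it on generators once. Secondly, I must make sure every reduction I perform actually terminates inside $I$ rather than looping: since every substitution either introduces one of the three given zero-monomials ($\gamma\beta\delta$, $\delta\eta\gamma$, $\lambda\kappa\eta$) as a subword, or introduces a previously-established member of the list as a subword, the dependency graph among the twelve is acyclic and the argument closes. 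No hard ideas are needed — the content is entirely in organizing the substitutions and the symmetry.
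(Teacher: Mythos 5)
Your fallback plan of carrying out the substitutions directly is sound and matches what the paper does, which is simply to verify all twelve memberships by hand (and your sketched reductions of $\lambda\beta\gamma$, $\beta\delta\eta$ and $\beta\gamma\kappa$ are correct). But the organizing principle you lean on — an order-$3$ cyclic symmetry of $I$ permuting $(\beta,\gamma)$, $(\delta,\eta)$, $(\kappa,\lambda)$ — does not exist once $c=2^{n-2}>2$. The cyclic quiver automorphism sends the relation $\lambda\beta=(\eta\delta)^{c-1}\eta$ to $\beta\delta=(\kappa\lambda)^{c-1}\kappa$, which is not among the generators of $I$ and is genuinely false in $\bar{\Lambda}$ for $c\geq 3$: one checks from the relations that $(\kappa\lambda)^{c-1}\kappa=0$ there (it is a path longer than anything allowed in $e_1\bar{\Lambda}$), whereas $\beta\delta=\kappa\lambda\kappa\neq 0$. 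So that automorphism does not preserve $I$, and no amount of ``stating the symmetry carefully'' will rescue it; the vertex $1$ is genuinely distinguished by the Cartan matrix $\bigl(\begin{smallmatrix}4&2&2\\2&2+c&c\\2&c&2+c\end{smallmatrix}\bigr)$.

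What does exist is a $\Z/2\times\Z/2$: the quiver automorphism $\tau$ swapping vertices $2$ and $3$, and the anti-automorphism $\sigma$ swapping each arrow with its reverse. But to know that either of these preserves $I$ you must first show that the images under $\sigma$ or $\tau$ of the three degree-$3$ relations $\gamma\beta\delta$, $\delta\eta\gamma$, $\lambda\kappa\eta$ lie in $I$ — and those images are exactly some of the twelve elements you are trying to prove lie in $I$. So the symmetry argument has a built-in circularity that forces you to do several of the twelve by hand before you can quote symmetry. Moreover, under $\langle\sigma,\tau\rangle$ the twelve elements split into three orbits of size four, and your two chosen representatives $\lambda\beta\gamma$ and $\beta\gamma\kappa$ lie in the \emph{same} orbit, so they would not suffice to bootstrap the other two orbits even if the symmetry were already in hand. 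The safe and in fact quickest route is the paper's: derive each of the nine non-generator monomials by one or two substitutions, keeping track of which earlier ones are being reused (the paper uses exactly such a dependency chain, e.g.\ $\lambda\beta\gamma\in I$ is established first and then reused for $\eta\gamma\beta$, $\beta\delta\eta$ and $\eta\delta\lambda$).
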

\begin{proof} This is just a calculation (in each case we highlight the part of the path that we will substitute in the next step using one of the relations given in \eqref{eq_def_rel}):
	\begin{multicols}{2}
	\begin{equation}\label{eq_lbg}
	\mathbold{\lambda\beta}\gamma = (\eta\delta)^{c-1}\eta \gamma = (\eta\delta)^{c-2}\eta  \mathbold{\delta\eta\gamma} = 0
	\end{equation}
	\begin{equation}\label{eq_bgk}
	\begin{array}{rcl}
	\beta\mathbold{\gamma\kappa} &=&\beta (\delta\eta)^{c-1} \delta = \mathbold{\beta\delta} (\eta\delta)^{c-1}\\ &=&
	\kappa\lambda\kappa (\eta\delta)^{c-1} = \kappa\mathbold{\lambda\kappa\eta}\delta (\eta\delta)^{c-2}	\\&=&0
	\end{array}
	\end{equation}
	\begin{equation}
	\mathbold{\gamma\beta\delta} = 0
	\end{equation}
	\begin{equation}
	\begin{array}{rcl}
	\mathbold{\eta\gamma}\beta = \lambda\mathbold{\kappa\lambda \beta}=0 \textrm{ (uses \eqref{eq_klb}) }
	\end{array}
	\end{equation}
	\begin{equation}
	\mathbold{\beta\delta}\eta = \kappa\lambda\mathbold{\kappa\eta} = \kappa\mathbold{\lambda\beta\gamma}\beta = 0 \textrm{ (uses \eqref{eq_lbg})}
	%= \kappa (\eta\delta)^{2^{n-2}-1}\eta\gamma\beta = \kappa (\eta\delta)^{2^{n-2}-2}\eta\mathbold{\delta\eta\gamma}\beta=0
	\end{equation}
	\begin{equation}
	\mathbold{\delta \eta\gamma} = 0  
	\end{equation}
	\begin{equation}
	\eta\mathbold{\delta\lambda} = \mathbold{\eta\gamma}\beta\gamma = \lambda\kappa\mathbold{\lambda\beta\gamma} =0 \textrm{ (uses \eqref{eq_lbg})}
	\end{equation}
	\begin{equation}\label{eq_ked}
	\mathbold{\kappa\eta}\delta = \beta\mathbold{\gamma\beta\delta}=0 
	\end{equation}
	\begin{equation}
	\mathbold{\delta\lambda}\kappa = \gamma\mathbold{\beta\gamma\kappa} = 0 \textrm{ (uses \eqref{eq_bgk})}
	\end{equation}
	\begin{equation}
	\mathbold{\lambda\kappa\eta}=0
	\end{equation}
	\begin{equation}\label{eq_klb}
	\begin{array}{rcl}
	\kappa\mathbold{\lambda\beta} &=& \kappa (\eta\delta)^{c-1} \eta \\&=& \mathbold{\kappa\eta\delta} (\eta\delta)^{c-2} \eta \\&=& 0 \textrm{ (uses \eqref{eq_ked})}
	\end{array}  
	\end{equation}
	\begin{equation}
	\begin{array}{rcl}
	\mathbold{\gamma\kappa}\lambda &=& (\delta\eta)^{c-1} \mathbold{\delta\lambda}= (\delta\eta)^{c-1} \gamma\beta\gamma \\&=&
	(\delta\eta)^{c-2}\mathbold{\delta\eta \gamma}\beta\gamma =0 
	\end{array}
	\end{equation}
	\end{multicols}
\end{proof}
\begin{remark}\label{remark_dichotomy_paths}
	Think of the quiver $Q$ as a triangle and of paths in $Q$ as walks along the edges of that triangle.
	Then Proposition \ref{prop_backbackforth} tells us that going two steps forward in any direction and then going one step back in the opposite direction gives us a path which is zero in $\bar{\Lambda}$. And, in the same vein, going one step in any direction followed by two steps in the opposite direction also yields zero. This implies
	that there are two kinds of non-zero paths in $\bar \Lambda$:
	\begin{enumerate}
		\item Paths going back and forth between two vertices.
		\item Paths walking around the triangle without changing direction.
	\end{enumerate}
\end{remark}
\begin{prop}\label{prop_central_el}
	The elements $\beta\gamma+\gamma\beta$, $\delta\eta+\eta\delta$ and $\lambda\kappa+\kappa\lambda$ lie in the center of $\bar{\Lambda}$.
\end{prop}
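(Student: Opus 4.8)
The plan is to check directly that each of the three elements commutes with a generating set of $\bar\Lambda$ as a $k$-algebra, namely with the primitive idempotents $e_1,e_2,e_3$ attached to the vertices of $Q$ and with the six arrows $\beta,\gamma,\delta,\eta,\kappa,\lambda$. Consider first $z=\beta\gamma+\gamma\beta$. Commutation with the idempotents is automatic, since $\beta\gamma\in e_1\bar\Lambda e_1$ and $\gamma\beta\in e_2\bar\Lambda e_2$, so that $e_iz=ze_i$ for $i=1,2,3$. For the arrows one simply computes $az$ and $za$ for each arrow $a$. Most of the products that arise are zero paths for the trivial reason that the target of one arrow fails to match the source of the next; the few that survive this are killed by the relations. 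Concretely, one finds $\beta z=\beta\gamma\beta=z\beta$ and $\gamma z=\gamma\beta\gamma=z\gamma$; for $\delta$ one has $z\delta=\gamma\beta\delta=0$ by \eqref{eq_def_rel} and $\delta z=0$, so $\delta z=z\delta$; and for the remaining arrows $\kappa,\lambda,\eta$ both $az$ and $za$ vanish, once one knows that $\beta\gamma\kappa$, $\lambda\beta\gamma$ and $\eta\gamma\beta$ are zero, which is exactly Proposition \ref{prop_backbackforth}.

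The argument for $z=\delta\eta+\eta\delta$ has precisely the same shape: $\delta\eta\in e_2\bar\Lambda e_2$ and $\eta\delta\in e_3\bar\Lambda e_3$, so commutation with the idempotents is clear; $\delta z=\delta\eta\delta=z\delta$ and $\eta z=\eta\delta\eta=z\eta$; $z\gamma=\delta\eta\gamma=0$ by \eqref{eq_def_rel} gives $\gamma z=z\gamma=0$; and for $\beta,\kappa,\lambda$ both products vanish, this time using that $\beta\delta\eta$, $\kappa\eta\delta$ and $\eta\delta\lambda$ are zero by Proposition \ref{prop_backbackforth}. Finally, for $z=\lambda\kappa+\kappa\lambda$ I would either repeat the computation verbatim or, more cheaply, observe that the assignment $\beta\leftrightarrow\kappa$, $\gamma\leftrightarrow\lambda$, $\delta\leftrightarrow\eta$ — the reflection of the triangle $Q$ fixing the vertex $1$ — maps the ideal $I$ into itself: each defining relation in \eqref{eq_def_rel} is sent to a defining relation, with the single exception of $\delta\eta\gamma$, which is sent to $\eta\delta\lambda\in I$ by Proposition \ref{prop_backbackforth}. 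Hence this assignment induces an algebra automorphism of $\bar\Lambda$, which carries central elements to central elements and sends $\beta\gamma+\gamma\beta$ to $\lambda\kappa+\kappa\lambda$, so the third case reduces to the first.

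I expect no genuine obstacle here; the statement amounts to a finite computation, as the elementary character of this section already signals. The only thing requiring care is the bookkeeping — keeping straight, for each product of an arrow with $z$, whether it vanishes because it is not a legitimate path or because a relation kills it. A convenient way to organize this is via Remark \ref{remark_dichotomy_paths}: multiplying $\beta\gamma+\gamma\beta$, a path bouncing back and forth along the edge between vertices $1$ and $2$, by an arrow either extends it to a longer bouncing path along the same edge (the terms that survive, and which match on both sides) or else produces a ``two steps forward, one step back'' or ``one step forward, two steps back'' configuration, which is zero by Proposition \ref{prop_backbackforth}; the same picture governs the other two elements, each attached to one of the remaining two edges of the triangle.
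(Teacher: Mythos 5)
Your proof is correct and takes essentially the same approach as the paper: check commutation with the vertex idempotents (automatic) and with each arrow, where $\beta,\gamma$ give equal nonzero products on both sides and all other arrow products vanish either by path incompatibility or by Proposition~\ref{prop_backbackforth}. The observation that the reflection $\beta\leftrightarrow\kappa$, $\gamma\leftrightarrow\lambda$, $\delta\leftrightarrow\eta$ is an algebra automorphism of $\bar\Lambda$ is a nice (and correct) shortcut for the third element, but it does not change the substance of the argument.
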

\begin{proof}
	Proposition \ref{prop_backbackforth} implies that the product of $\beta\gamma+\gamma\beta$ with any arrow other than $\beta$ and $\gamma$ must be zero,
	regardless of whether we multiply from the left or from the right. Moreover, the following holds already in the quiver algebra $kQ$, independently of the relations in $I$:
	\begin{equation}
	\beta \cdot (\beta\gamma+\gamma\beta) = \beta\gamma\beta = (\beta\gamma+\gamma\beta)\cdot \beta \quad \textrm{and} \quad
	\gamma \cdot (\beta\gamma+\gamma\beta) = \gamma\beta\gamma = (\beta\gamma+\gamma\beta)\cdot \gamma
	\end{equation}
	The element $\beta\gamma+\gamma\beta$ also commutes with the idempotents $e_1$, $e_2$ and $e_3$, also already in $kQ$. So clearly $\beta\gamma+\gamma\beta$ is central, and the exact same reasoning implies that  $\delta\eta+\eta\delta$ and $\lambda\kappa+\kappa\lambda$ are central as well.
\end{proof}
\begin{prop}\label{prop_all_around}
	Assume $x$ is a path in $Q$. Then one of the following three possibilities holds:
	\begin{equation}
	\begin{array}{c}
	x+I = \gamma^{d_1}\cdot (\beta\gamma)^m\cdot \beta^{d_2} +I \textrm{ for some $m\in \Z_{\geq 0}$ and $d_1, d_2 \in \{0,1\}$} \\
	\textrm{or} \\
	x+I = \eta^{d_1}\cdot (\delta\eta)^m\cdot \delta^{d_2} +I \textrm{ for some $m\in \Z_{\geq 0}$ and $d_1, d_2 \in \{0,1\}$} \\
	\textrm{or} \\
	x+I = \kappa^{d_1}\cdot (\lambda\kappa)^m\cdot \lambda^{d_2} +I \textrm{ for some $m\in \Z_{\geq 0}$ and $d_1, d_2 \in \{0,1\}$}
	\end{array}
	\end{equation}
	In other words, any path in $Q$ is equivalent mod $I$ to a path going back and forth between two vertices. 
\end{prop}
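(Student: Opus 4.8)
The plan is to reduce the statement to a short, finite verification built on top of Proposition~\ref{prop_backbackforth} and the structural description of nonzero paths in Remark~\ref{remark_dichotomy_paths}. As a preliminary I would record that $(\beta\gamma)^3\in I$: grouping $(\beta\gamma)^3=(\beta\gamma\beta)(\gamma\beta\gamma)$ and applying the relations $\beta\gamma\beta=\kappa\eta$ and $\gamma\beta\gamma=\delta\lambda$ from \eqref{eq_def_rel} gives $(\beta\gamma)^3=\kappa\eta\delta\lambda=(\kappa\eta\delta)\lambda$, which lies in $I$ since $\kappa\eta\delta\in I$ by Proposition~\ref{prop_backbackforth}. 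Hence the zero element of $\bar\Lambda$ already has the first of the three listed shapes (with $m=3$, $d_1=d_2=0$), so whenever $x\in I$ there is nothing to prove, and I may assume $x\notin I$ from now on.

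Next I would invoke the dichotomy of Remark~\ref{remark_dichotomy_paths}: a path of $Q$ which neither runs back and forth along a single edge of the triangle nor runs monotonically around the triangle must contain one of the twelve length-three subpaths listed in Proposition~\ref{prop_backbackforth}, hence lies in $I$; so a path with $x\notin I$ is of one of these two types. In the first case $x$ is literally of one of the three listed shapes: a back-and-forth walk on the edge $\{1,2\}$ is visibly $\gamma^{d_1}(\beta\gamma)^m\beta^{d_2}$, and similarly for the edges $\{2,3\}$ and $\{1,3\}$, with the three idempotents occurring as $(\beta\gamma)^0=e_1$, $(\delta\eta)^0=e_2$, $(\lambda\kappa)^0=e_3$ and the six arrows as the remaining length-$\le 1$ cases. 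So it remains to treat the case where $x$ is a monotone loop of length $\ell\ge 2$.

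This last case is a bounded computation. For $\ell=2$ the path is one of the six ``corners'' $\beta\delta,\ \delta\lambda,\ \lambda\beta,\ \gamma\kappa,\ \kappa\eta,\ \eta\gamma$, and a single relation from \eqref{eq_def_rel} rewrites each as a back-and-forth walk, e.g.\ $\beta\delta=\kappa\lambda\kappa$, $\delta\lambda=\gamma\beta\gamma$, $\lambda\beta=(\eta\delta)^{c-1}\eta=\eta(\delta\eta)^{c-1}$, together with their rotations. For $\ell=3$ the path is one of the six full loops; peeling off two of its arrows and applying one relation again lands in a back-and-forth walk, e.g.\ $\beta\delta\lambda=\beta(\delta\lambda)=(\beta\gamma)^2$ and $\lambda\beta\delta=(\lambda\beta)\delta=(\eta\delta)^{c}$, and rotations. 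Finally, for $\ell\ge 4$ it suffices to treat $\ell=4$, because any longer monotone walk has a monotone length-four prefix: here $\beta\delta\lambda\beta\equiv(\beta\gamma)^2\beta=(\beta\gamma\beta)(\gamma\beta)=\kappa\eta\gamma\beta$ modulo $I$, which lies in $I$ since $\eta\gamma\beta\in I$, and the five rotations are handled identically; so such $x$ lies in $I$ and we are back in the already-settled case. Assembling the cases proves the proposition.

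I do not expect a genuine obstacle: the entire content sits in Proposition~\ref{prop_backbackforth} and the defining relations \eqref{eq_def_rel}, and what is left is a finite, if slightly fiddly, calculation. The one spot calling for a moment of care is the step $\ell\ge 4$, where one must observe that every monotone loop of length four is already zero in $\bar\Lambda$; apart from that it is only a matter of keeping the orientations of the six arrows straight while performing the corner and loop rewrites.
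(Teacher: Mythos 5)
Your proof is correct, but it takes a different route than the paper. The paper proves the statement by a clean induction on path length: for $\ell\ge 3$ it peels off the last arrow, applies the induction hypothesis to the length-$(\ell-1)$ prefix to bring it into alternating form, and then observes that appending one more arrow either continues the alternation or produces a bad length-three tail that lies in $I$ by Proposition~\ref{prop_backbackforth}. You instead run a structural case analysis: dispose of $x\in I$ first (via the observation $(\beta\gamma)^3\in I$), then invoke the dichotomy of Remark~\ref{remark_dichotomy_paths} to reduce to alternating or monotone paths, and finally handle monotone paths by length ($\ell=2,3$ rewrite directly; $\ell\ge 4$ lie in $I$ because monotone length-four loops do). Both are sound. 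The paper's induction is more uniform: it never needs the preliminary $(\beta\gamma)^3\in I$ or the separate length-$\ge 4$ monotone-loop computation, since those drop out automatically. Your version is more explicit about the shape of the vanishing paths, and it makes visible the fact that monotone walks ``close up'' after one full circuit; the price is a little extra bookkeeping (the $\ell\ge4$ verification for all six monotone length-four loops, and the $x\in I$ base point). One small remark: your argument leans on the dichotomy stated in Remark~\ref{remark_dichotomy_paths}, which the paper asserts without a formal proof; it does follow from Proposition~\ref{prop_backbackforth} by the sign-pattern observation you implicitly use (every length-three window of a path's CW/CCW sign sequence must be one of $+-+,-+-,+++,---$), and it would be worth a sentence to close that loop if one wanted a fully self-contained write-up.
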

\begin{proof}
	We will prove this by induction. For paths of length strictly less than two there is nothing to show. For paths of length two the claim follows from the
	first six relations given in \eqref{eq_def_rel}. Using the induction hypothesis a path of length at least three may be written as a path 
	alternating between two vertices composed with a single arrow. Either the resulting path is also alternating between two vertices, in which case there is nothing to show, or the last three arrows occurring in the path must lie in $I$ by the ``one step in one direction, two steps in the opposite direction'' criterion explained in Remark \ref{remark_dichotomy_paths}.
\end{proof}

\begin{prop}\label{prop_epi_center}
	We have
	\begin{equation}\label{eqn_sjkhdjhd}
	e_i \bar{\Lambda} e_i = e_i Z(\bar{\Lambda}) e_i \quad \textrm{ for all $i\in \{1,2,3\}$}
	\end{equation}
\end{prop}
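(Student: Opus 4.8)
The plan is to prove the nontrivial inclusion $e_i\bar{\Lambda}e_i \subseteq e_i Z(\bar{\Lambda}) e_i$, the reverse inclusion being immediate since $Z(\bar\Lambda)\subseteq\bar\Lambda$ and $e_i$ commutes with every central element.

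First I would make $e_i\bar\Lambda e_i$ explicit. All the relations in \eqref{eq_def_rel} are homogeneous for the source–target grading of $kQ$, so $\bar\Lambda$ inherits a $Q_0\times Q_0$-grading, and $e_i\bar\Lambda e_i$ is spanned by the classes $x+I$ with $x$ a path from $i$ to $i$. By Proposition \ref{prop_all_around} each such class is represented by a path walking back and forth between two vertices, and inspecting which of the three forms in that proposition have source and target equal to $i$ shows that, apart from the trivial path $e_i$, only powers of the two ``length-two cycles'' at $i$ occur: $(\beta\gamma)^m$ and $(\kappa\lambda)^m$ for $i=1$; $(\gamma\beta)^m$ and $(\delta\eta)^m$ for $i=2$; $(\eta\delta)^m$ and $(\lambda\kappa)^m$ for $i=3$.

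Next I would exhibit each of these generators in $e_i Z(\bar\Lambda)e_i$ using the central elements of Proposition \ref{prop_central_el}. For $i=1$ and $z=\beta\gamma+\gamma\beta\in Z(\bar\Lambda)$ one has $e_1 z e_1 = \beta\gamma$, since $\gamma\beta$ lies in $e_2\bar\Lambda e_2$; and more generally $e_1 z^m e_1 = (\beta\gamma)^m$, because on expanding $z^m$ every mixed term contains a subpath $\beta\beta$ or $\gamma\gamma$ and hence already vanishes in $kQ$, leaving only $(\beta\gamma)^m+(\gamma\beta)^m$, of which the first summand is the part in $e_1\bar\Lambda e_1$. The same computation with $z=\lambda\kappa+\kappa\lambda$ gives $(\kappa\lambda)^m = e_1 z^m e_1$, and the analogous statements hold verbatim at the other two vertices. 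Since finally $e_i = e_i\cdot 1\cdot e_i$ with $1\in Z(\bar\Lambda)$, every spanning element of $e_i\bar\Lambda e_i$ lies in $e_i Z(\bar\Lambda)e_i$, which establishes \eqref{eqn_sjkhdjhd}.

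There is no genuine obstacle here: the proposition is essentially a bookkeeping corollary of Propositions \ref{prop_central_el} and \ref{prop_all_around}. The only point to handle with a modicum of care is that reducing a path from $i$ to $i$ to back-and-forth form via Proposition \ref{prop_all_around} does not change its source or target — which is exactly what the source–target grading guarantees — so that the reduced path really is one of the two cycles at $i$ (or the trivial path) rather than a cycle at some other vertex.
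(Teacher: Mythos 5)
Your proof is correct and follows essentially the same route as the paper: both proofs combine Proposition \ref{prop_central_el} (to produce central elements whose projection by $e_i$ gives the two length-two cycles at vertex $i$) with Proposition \ref{prop_all_around} (to know that those cycles generate $e_i\bar\Lambda e_i$). The only difference is presentational — you unwind the computation $e_i z^m e_i = (\beta\gamma)^m$ explicitly, whereas the paper leaves it implicit in the observation that $e_iZ(\bar\Lambda)e_i$ is a $k$-subalgebra containing the generators.
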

\begin{proof}
	Proposition \ref{prop_central_el} guarantees that $e_0Z(\bar{\Lambda})e_0$ contains $\beta\gamma$ and $\kappa\lambda$, $e_1Z(\bar{\Lambda})e_1$ contains $\gamma\beta$ and $\delta\eta$
	and that $e_2Z(\bar{\Lambda})e_2$ contains $\eta\delta$ and $\lambda\kappa$. Hence in order to prove \eqref{eqn_sjkhdjhd} it suffices to show that for each $i$ the given two elements generate $e_i\bar{\Lambda}e_i$ as a $k$-algebra. But this follows directly from Proposition \ref{prop_all_around}.
\end{proof}

\begin{prop}\label{prop_cyclic_generated}
	Let $i,j\in \{1,2,3\}$ with $i\neq j$. Then  $e_i\bar{\Lambda}e_j$ is generated by a single element as a left $e_i\bar{\Lambda}e_i$-module, namely by the
	arrow in $Q$ going from $e_i$ to $e_j$.
\end{prop}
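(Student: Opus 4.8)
The plan is to derive this directly from Proposition \ref{prop_all_around} together with a little bookkeeping about sources and targets of paths. Fix $i\neq j$ and write $\alpha$ for the unique arrow of $Q$ from $i$ to $j$ and $\alpha'$ for the unique arrow from $j$ to $i$; recall that $Q$, being a ``triangle'', has exactly one arrow in each direction between any two of its three vertices. Since $\alpha\in e_i\bar{\Lambda} e_j$ trivially, it suffices to show $e_i\bar{\Lambda} e_j\subseteq (e_i\bar{\Lambda} e_i)\cdot\alpha$.

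First I would record that each of the defining relations in \eqref{eq_def_rel} is either a single path or an equality of $k$-linear combinations of paths all sharing the same source and the same target. Consequently $I=\bigoplus_{s,t}e_s I e_t$, and reduction modulo $I$ is compatible with the decomposition $kQ=\bigoplus_{s,t}e_s kQ e_t$. In particular $e_i\bar{\Lambda} e_j$ is spanned over $k$ by the images of paths in $Q$ running from $i$ to $j$, so it is enough to treat the image of one such path.

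By Proposition \ref{prop_all_around}, the image of a path from $i$ to $j$ equals the image of a path that goes back and forth between two vertices, and that path may again be taken to run from $i$ to $j$ (apply the idempotents $e_i,e_j$ and use the componentwise compatibility just noted). A path alternating between two vertices that starts at $i$ and ends at $j\neq i$ alternates precisely between $i$ and $j$; since the only arrows joining $i$ and $j$ are $\alpha$ and $\alpha'$, and the first arrow out of $i$ can only be $\alpha$, such a path has the form $(\alpha\alpha')^m\alpha$ for some $m\in\Z_{\geq 0}$ (its length $2m+1$ being forced to be odd so that it ends at $j$). Now $(\alpha\alpha')^m$ is the image of a path from $i$ to $i$, hence lies in $e_i\bar{\Lambda} e_i$, and therefore the image of our path is $\big((\alpha\alpha')^m+I\big)\cdot(\alpha+I)\in(e_i\bar{\Lambda} e_i)\cdot\alpha$. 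This yields $e_i\bar{\Lambda} e_j=(e_i\bar{\Lambda} e_i)\cdot\alpha$, as claimed. (Concretely, for $(i,j)=(1,2)$ one has $\alpha=\beta$, $\alpha'=\gamma$ and $e_1\bar{\Lambda} e_2=(e_1\bar{\Lambda} e_1)\beta$, and symmetrically for the other five ordered pairs.)

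I do not expect any genuine obstacle here; the proof is essentially bookkeeping. The only two points that require a moment's attention are that Proposition \ref{prop_all_around} may be applied componentwise with respect to the source and target of paths — which is exactly the homogeneity of the relations observed above — and that the uniqueness of the arrow between two distinct vertices of $Q$ in each direction pins the alternating path down to the shape $(\alpha\alpha')^m\alpha$, so that the entire ``backbone'' $(\alpha\alpha')^m$ is visibly absorbed into the coefficient ring $e_i\bar{\Lambda} e_i$.
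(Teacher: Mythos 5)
Your proof is correct and follows essentially the same route as the paper: invoke Proposition \ref{prop_all_around} to reduce to paths alternating between two vertices, observe that any such path from $i$ to $j$ must end with the arrow $\alpha$ from $i$ to $j$, and factor off the initial loop at $i$ as an element of $e_i\bar{\Lambda}e_i$. The extra remarks on the homogeneity of the relations and the explicit form $(\alpha\alpha')^m\alpha$ are sound bookkeeping but add nothing beyond what the paper's shorter argument already contains.
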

\begin{proof}
	By Proposition \ref{prop_all_around}, $e_i\bar{\Lambda}e_j$ is spanned as a $k$-vector space by paths going back and forth between the vertices $e_i$ and $e_j$, starting in $e_i$ and ending in $e_j$. But necessarily the last arrow involved in such a path is the arrow going from $e_i$ to $e_j$. Therefore the path is a product
	of a path starting and ending in $e_i$, that is, an element of $e_i\bar\Lambda e_i$, and the arrow going from $e_i$ to $e_j$. This proves that the latter arrow generates the module.
\end{proof}

\section{Generalities on the principal block of $\OO \SL_2(q)$}

In this section we gather some basic facts on the principal $2$-block of $\SL_2(q)$, in particular the case $q\equiv 3 \mod 4$. A Sylow $2$-subgroup of $\SL_2(q)$ is isomorphic to $Q_{2^n}$, where 
$n$ is the $2$-valuation of $q^2-1$. We should first remark that for every $n\geq 3$ there exists a $q$ such that a
Sylow $2$-subgroup of $\SL_2(q)$ is isomorphic to $Q_{2^n}$.
\begin{remark}
	Assume $n \geq 3$. Then the condition $q \equiv 2^{n-1}-1 \mod 2^n$ ensures that $q+1$ is divisible by two exactly $n-1$ times, and $q-1$ is divisible by two exactly once, and therefore $q^2-1=(q-1)(q+1)$ has $2$-valuation $n$. Moreover, such a $q$ satisfies $q\equiv 3 \mod 4$. By Dirichlet's theorem on arithmetic progressions there exists an infinite number of primes $q$ satisfying
	the condition $q \equiv 2^{n-1}+1 \mod 2^n$.
	
	Similarly there is a prime $q$ with $q\equiv 2^{n-1}+1 \mod 2^n$. This $q$ will of course satisfy $q\equiv 1 \mod 4$ and a Sylow $2$-subgroup of $\SL_2(q)$ is isomorphic to $Q_{2^n}$.
\end{remark}

According to \cite[Table 9.1 on page 107]{Bonnafe} the decomposition matrix of $B_0(\SL_2(q))$ for $q\equiv 3 \mod 4$ is as follows 
\begin{equation}\label{decomp}
\begin{array}{ccccc}	
& e_1 & e_2 & e_3 \\ \cmidrule[1.2pt]{2-4}
\chi_1 & 1 & 0 & 0 \\
\chi_2 & 0 & 1 & 0 \\
\chi_3 & 0 & 0 & 1 \\
\chi_4 & 1 & 1 & 1 \\
\chi_5 & 1&1&0\\
\chi_6 & 1&0&1\\\cline{2-4}
\chi_{6+r} & 0 & 1 & 1 & \ [\textrm{ exactly once for each } r=1,\ldots,l \ ] 
\end{array}
\end{equation}
where $l=2^{n-2}-1$.
By comparing this matrix with the possible decomposition matrices given in the appendix of \cite{TameClass} (or by comparing Cartan matrices), one deduces that $B_0(k\SL_2(q))$ is Morita equivalent to $Q(3\mathcal K)^c$ for $c=2^{n-2}$ (i. e. the algebra we looked at in the previous sections)
if $q\equiv 3 \mod 4$. 

\begin{prop}[Decomposition matrix]\label{prop_decomp_matrix}
	Assume $n\geq 3$.
	If $D\in \{0,1\}^{(6+l)\times 3}$ is a matrix satisfying the equation
	\begin{equation}
	D^\top \cdot D = \left( \begin{array}{ccc}
	4 & 2 & 2 \\ 2 & 2+2^{n-2} & 2^{n-2} \\ 2 & 2^{n-2} & 2+2^{n-2}
	\end{array}\right) =: C
	\end{equation}
	and if every row of $D$ is non-zero,
	then $D$ is equal to the decomposition matrix given in
	\eqref{decomp}, up to permutation of rows and columns.
\end{prop}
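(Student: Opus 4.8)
The plan is to think of $D$ as a list of $6+l$ nonzero row vectors in $\{0,1\}^3$, and to use the equation $D^\top D = C$ to count how many times each of the seven possible nonzero rows can occur. Write $n_S$ for the number of rows of $D$ equal to the indicator vector of a subset $\emptyset\neq S\subseteq\{1,2,3\}$; there are seven unknowns $n_{\{1\}},n_{\{2\}},n_{\{3\}},n_{\{1,2\}},n_{\{1,3\}},n_{\{2,3\}},n_{\{1,2,3\}}$. The diagonal entries of $C$ give $\sum_{S\ni i} n_S = C_{ii}$ for $i=1,2,3$, and the off-diagonal entries give $\sum_{S\supseteq\{i,j\}} n_S = C_{ij}$ for $i\neq j$; together with $\sum_S n_S = 6+l = 5+2^{n-2}$ this is a linear system in seven nonnegative-integer unknowns.

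First I would observe that the three relations $\sum_{S\supseteq\{i,j\}} n_S = 2,2^{n-2},2^{n-2}$ pin down the ``size $\geq 2$'' counts almost immediately: from $n_{\{1,2\}}+n_{\{1,2,3\}}=2$, $n_{\{1,3\}}+n_{\{1,2,3\}}=2^{n-2}$, $n_{\{2,3\}}+n_{\{1,2,3\}}=2^{n-2}$, together with the total $\sum_S n_S = 5 + 2^{n-2}$ and the diagonal equations, one solves the linear system. I expect it to force $n_{\{1,2,3\}}=1$, hence $n_{\{1,2\}}=1$, $n_{\{1,3\}}=n_{\{2,3\}}=2^{n-2}-1$, and then the diagonal equations $n_{\{1\}}+n_{\{1,2\}}+n_{\{1,3\}}+n_{\{1,2,3\}}=4$ etc. give $n_{\{1\}}=n_{\{2\}}=n_{\{3\}}=1$. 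The only subtlety is checking that the system genuinely has a \emph{unique} nonnegative integer solution rather than merely admitting the expected one; since $2^{n-2}-1\geq 1$ for $n\geq 3$, no count is forced negative and the rank of the system is full enough to kill all freedom, but for $n=3$ (where $l=1$ and $2^{n-2}-1=1$) one should double-check the small case by hand. Once the multiset of rows is determined, $D$ is the matrix of \eqref{decomp} up to reordering rows, and reordering columns is absorbed into the statement's ``up to permutation''.

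The main obstacle is purely bookkeeping: making sure the seven-variable linear system is set up correctly and that one has used \emph{all} the information in $C$ (both diagonal and off-diagonal) plus the row count, so that the solution is unique and not just consistent. There is no deep idea here — it is a finite linear-algebra / counting argument — but it must be done carefully because an off-by-one in the off-diagonal entries $C_{23}=2^{n-2}$ versus $C_{22}=2+2^{n-2}$ is exactly what distinguishes the rows $(0,1,1)$ from $(0,1,0)$-type rows. I would present it as: set up the counting variables, read off the six equations from $C$ plus the total, solve, and conclude.
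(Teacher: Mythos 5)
Your approach is essentially the same as the paper's: the paper phrases it as inclusion--exclusion with $c_i = \{j : D_{j,i}=1\}$, using $|c_i|=C_{ii}$, $|c_i\cap c_j|=C_{ij}$ and $|c_1\cup c_2\cup c_3|=6+l$ to derive $|c_1\cap c_2\cap c_3|=1$; your seven-variable linear system in the $n_S$ is just the same bookkeeping written out explicitly. The method is sound, and the system does have a unique solution, so the overall plan works.

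However, you miscopied the off-diagonal data from $C$. The matrix has $C_{12}=C_{13}=2$ and $C_{23}=2^{n-2}$, so the correct constraints of size $\geq 2$ are
\begin{equation*}
n_{\{1,2\}}+n_{\{1,2,3\}}=2,\qquad n_{\{1,3\}}+n_{\{1,2,3\}}=2,\qquad n_{\{2,3\}}+n_{\{1,2,3\}}=2^{n-2},
\end{equation*}
whereas you wrote $n_{\{1,3\}}+n_{\{1,2,3\}}=2^{n-2}$. With your equations the predicted solution $n_{\{1,3\}}=n_{\{2,3\}}=2^{n-2}-1$ is not even consistent with the total: you would get $n_{\{1\}}+n_{\{2\}}+n_{\{3\}}=5+2^{n-2}-\bigl(1+(2^{n-2}-1)+(2^{n-2}-1)+1\bigr)=5-2^{n-2}$, which is negative for $n\geq 5$. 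With the corrected equations, inclusion--exclusion gives $n_{\{1,2,3\}}=1$, hence $n_{\{1,2\}}=n_{\{1,3\}}=1$, $n_{\{2,3\}}=2^{n-2}-1$, and then the diagonal equations force $n_{\{1\}}=n_{\{2\}}=n_{\{3\}}=1$, which matches \eqref{decomp}. So the argument is fine once you fix the transcription of $C$; you correctly flagged this as exactly the spot where an off-by-one would hurt.
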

\begin{proof}
	For $i\in\{1,2,3\}$ denote by $c_i$ the set of all indices $j\in\{1,\ldots,6+l\}$ such that
	$D_{j,i}=1$. Since every row of $D$ is non-zero we have $|c_1\cup c_2 \cup c_3| = 6+l$.
	Moreover $|c_i|=C_{i,i}$ and $|c_i\cap c_j|=C_{i,j}$ for all $i,j\in\{1,2,3\}$.
	The inclusion-exclusion principle implies that
	\begin{equation}
	\begin{array}{rcl}
	|c_1\cap c_2 \cap c_3| &=& |c_1\cup c_2 \cup c_3| - |c_1| - |c_2| - |c_3| + |c_1\cap c_2| + |c_1\cap c_3| + |c_2\cap c_3| \\
	&=& (5+2^{n-2}) - 4 - (2+2^{n-2}) - (2+2^{n-2}) +2+2+2^{n-2} = 1
	\end{array}
	\end{equation}
	The number of rows of $D$ equal to a given row can now easily be computed. For instance
	the number of rows equal to $(1,1,1)$ is equal to $|c_1\cap c_2 \cap c_3|=1$, and the number of rows equal
	to $(1,0,0)$ is equal to $|c_1| - |c_1\cap c_2| - |c_1\cap c_3| + |c_1\cap c_2 \cap c_3|=1$.
	It is clear that the inclusion-exclusion principle determines for each vector in $\{0,1\}^{1\times 3}$ how often it occurs as a row in $D$. 
\end{proof}

\begin{prop}\label{prop_crit_decomp_01}
	Assume that $\bar \Lambda$ is a $k$-algebra, and that $e_1,\ldots,e_n\in\bar\Lambda$ is a full set of orthogonal primitive idempotents. If $e_i Z(\bar{\Lambda})=e_i\bar \Lambda e_i$ for each $i$, then every $\OO$-order $\Lambda$ in a split-semisimple $K$-algebra $A$ with $k\otimes \Lambda \iso \bar{\Lambda}$ and $\rank_\OO Z(\Lambda)=\dim_k Z(\bar{\Lambda})$ has decomposition numbers less than or equal to one. 
\end{prop}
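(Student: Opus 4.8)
The plan is to reduce the assertion to a statement about the idempotent‑truncated algebras $\hat e_i A\hat e_i$ and then to show each of these is commutative by a dimension count. Since $\OO$ is complete, $\Lambda$ is semiperfect and idempotents lift along the reduction map $\pi\colon\Lambda\twoheadrightarrow\bar\Lambda=k\otimes\Lambda$, so I would choose orthogonal primitive idempotents $\hat e_1,\dots,\hat e_n\in\Lambda$ forming a full set with $\pi(\hat e_i)=e_i$. By the standard description of the decomposition matrix, its $i$-th column records the multiplicities of the simple $A$-modules in the projective lattice $\Lambda\hat e_i$; as $A$ is split semisimple this gives $\hat e_i A\hat e_i\cong\prod_\chi M_{d_{\chi i}}(K)$, where the $d_{\chi i}$ are exactly the $i$-th column of the decomposition matrix. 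Hence it suffices to show that $\hat e_i A\hat e_i$ is commutative for every $i$.

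Two facts feed the count. First, idempotent truncation commutes with reduction mod $\mathfrak p$ (the kernel of $\pi$ meets $\hat e_i\Lambda\hat e_i$ in $\mathfrak p\cdot\hat e_i\Lambda\hat e_i$), so $\hat e_i\Lambda\hat e_i$ is an $\OO$-order in $\hat e_i A\hat e_i$ with $k\otimes(\hat e_i\Lambda\hat e_i)\cong e_i\bar\Lambda e_i$; therefore $\dim_K\hat e_i A\hat e_i=\dim_k e_i\bar\Lambda e_i$, which equals $\dim_k e_i Z(\bar\Lambda)$ by hypothesis. Second, I claim $\pi$ maps $Z(\Lambda)$ onto $Z(\bar\Lambda)$. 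Here $Z(\Lambda)=\Lambda\cap Z(A)$ is a pure $\OO$-sublattice of $\Lambda$ (if $cx\in Z(\Lambda)$ with $0\neq c\in\OO$ then $x$ centralizes $A$, so $x\in Z(\Lambda)$), whence $k\otimes Z(\Lambda)$ embeds into $\bar\Lambda$ with image contained in $Z(\bar\Lambda)$; since also $K\otimes Z(\Lambda)=Z(A)$ we get $\dim_k(k\otimes Z(\Lambda))=\rank_\OO Z(\Lambda)=\dim_k Z(\bar\Lambda)$, using the hypothesis on the rank of the center, so that embedding is an isomorphism. In particular $\pi(Z(\Lambda))=Z(\bar\Lambda)$.

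Now I would assemble the chain. Left multiplication by $\hat e_i$ turns $\pi(Z(\Lambda))=Z(\bar\Lambda)$ into a surjection $\hat e_i Z(\Lambda)\twoheadrightarrow e_i Z(\bar\Lambda)$, so $\rank_\OO\hat e_i Z(\Lambda)\geq\dim_k e_i Z(\bar\Lambda)$. On the other hand $\hat e_i Z(\Lambda)$ is a full $\OO$-lattice in $\hat e_i Z(A)$, and $\hat e_i Z(A)$ is contained in $Z(\hat e_i A\hat e_i)$, so $\rank_\OO\hat e_i Z(\Lambda)=\dim_K\hat e_i Z(A)\leq\dim_K Z(\hat e_i A\hat e_i)$. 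Combining with the first fact,
\[
\dim_K\hat e_i A\hat e_i=\dim_k e_i Z(\bar\Lambda)\leq\rank_\OO\hat e_i Z(\Lambda)\leq\dim_K Z(\hat e_i A\hat e_i)\leq\dim_K\hat e_i A\hat e_i ,
\]
so all inequalities are equalities and $Z(\hat e_i A\hat e_i)=\hat e_i A\hat e_i$. Thus $\hat e_i A\hat e_i$ is commutative, and by the first paragraph this forces $d_{\chi i}\leq 1$ for all $\chi$, as wanted.

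There is no genuinely deep step: the content is the observation that the hypotheses $e_i\bar\Lambda e_i=e_i Z(\bar\Lambda)$ together with $\rank_\OO Z(\Lambda)=\dim_k Z(\bar\Lambda)$ force $Z(\Lambda)$ to reduce \emph{onto} $Z(\bar\Lambda)$ — which fails for a general $\OO$-order — and that this propagates to the corners $\hat e_i\Lambda\hat e_i$. The points needing care are the purity of $Z(\Lambda)$ inside $\Lambda$, the verification that $\hat e_i Z(\Lambda)$ is a full lattice in $\hat e_i Z(A)$ (it spans because $Z(\Lambda)$ spans $Z(A)$, and is $\OO$-torsion-free as a submodule of $\Lambda$), and the standard identification of the entries of the decomposition matrix with composition multiplicities in $K\otimes\Lambda\hat e_i$, for which split semisimplicity of $A$ is used.
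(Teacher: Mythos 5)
Your proof is correct and follows essentially the same plan as the paper: purity of $Z(\Lambda)$ in $\Lambda$ together with the rank hypothesis force $k\otimes Z(\Lambda)\cong Z(\bar\Lambda)$, and combined with $e_iZ(\bar\Lambda)=e_i\bar\Lambda e_i$ this makes each corner $\widehat e_i A\widehat e_i$ commutative, hence the $i$-th column of the decomposition matrix has entries $\leq 1$. The only difference is mechanical: the paper deduces surjectivity of $Z(\Lambda)\to\widehat e_i\Lambda\widehat e_i$ via Nakayama's lemma applied to a commutative square, whereas you reach the same conclusion by a chain of dimension (in)equalities passing through $\widehat e_i Z(A)\subseteq Z(\widehat e_iA\widehat e_i)$.
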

\begin{proof}
	Since $Z(\Lambda)$ is a pure sublattice of $\Lambda$ (meaning $r\cdot x \in Z(\Lambda)$ implies $x\in Z(\Lambda)$ for any $r\in\OO$) we have an embedding $k\otimes Z(\Lambda) \hookrightarrow k\otimes \Lambda$. The image of this embedding lies in the center of $k\otimes \Lambda$, and equality of dimensions implies that 
	$Z(k\otimes \Lambda)= k\otimes Z(\Lambda)$.
	Now we have a commutative diagram
	\begin{equation}
	\xymatrix{
		Z(\Lambda) \ar@{->}[rr] \ar@{->>}[d]&& \widehat e_i \Lambda \widehat e_i \ar@{->>}[d] \\
		Z(\bar \Lambda) \ar@{->>}[rr] && e_i \bar{\Lambda} e_i
	}
	\end{equation}
	where the surjectivity of all except the upper horizontal arrow has either been shown above (for the vertical arrow on the left), holds by assumption (for the lower horizontal arrow) or is a general fact (for the rightmost vertical arrow). If we consider this as a diagram of $\OO$-modules, then it follows from the Nakayama lemma that the top arrow has to be surjective too. But that implies that $\widehat e_i \Lambda \widehat e_i$ is commutative (and hence so is $\widehat e_i A \widehat e_i$), which implies that the column in the decomposition matrix belonging to $\widehat e_i$ has entries $\leq 1$. Since this holds for all $i$, the statement is proven.
\end{proof}

Note that the previous two propositions, in conjunction with Proposition \ref{prop_epi_center}, show that any $\OO$-order reducing to $Q(3 \mathcal K)^c$ which has semisimple $K$-span and the right dimension of the center has the matrix given in \eqref{decomp} as its decomposition matrix.

\begin{prop}[Splitting field in characteristic zero]\label{prop_splitting}
	Let $\Lambda$ be a block of $\OO G$ for some finite group $G$, and assume that the defect group of $\Lambda$ is isomorphic to $Q_{2^n}$ for some $n\geq 3$. Assume moreover that $K \supseteq \Q(\zeta_{2^{n-1}}+\zeta_{2^{n-1}}^{-1})$. Then $K\otimes \Lambda$ is split.
\end{prop}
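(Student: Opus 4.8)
The plan is to reduce the splitting question to a statement about the character field of the irreducible $K$-characters lying in $\Lambda$, and then to control those character fields via known results on blocks with quaternion defect. Recall that $K\otimes\Lambda$ is split if and only if every simple $\overline{K}\otimes\Lambda$-module is already realizable over $K$, equivalently, if every irreducible character $\chi\in\operatorname{Irr}(K\otimes\Lambda)$ has Schur index one over $K$ and has values in $K$. Since we are free to enlarge $K$ only up to containing $\Q(\zeta_{2^{n-1}}+\zeta_{2^{n-1}}^{-1})$, the two things I must check are: (i) the field generated by the values of every $\chi$ in the block is contained in $\Q(\zeta_{2^{n-1}}+\zeta_{2^{n-1}}^{-1})$, and (ii) the Schur indices vanish.

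First I would invoke the description of the ordinary characters of a block of defect $Q_{2^n}$ with three simple modules: by the results of Olsson \cite{Olsson}, refined in \cite{CabanesPicaronny}, such a block is perfectly isometric to the principal block of $\OO\SL_2(q)$ for a suitable $q\equiv 3\bmod 4$ with $v_2(q^2-1)=n$, and in particular it has exactly $6+l$ irreducible characters with $l=2^{n-2}-1$, whose degrees and whose behaviour on $2$-regular and $2$-singular classes is completely prescribed. The key point is that the ``exceptional family'' of $l$ characters (the rows $\chi_{6+r}$ in \eqref{decomp}) is obtained by inducing linear characters from a cyclic section of order $2^{n-1}$ inside the quaternion defect group, so their values are sums of $2^{n-1}$-th roots of unity that are moreover stable under complex conjugation (the defect group being non-abelian forces the relevant characters to be real on the pertinent classes); hence these values lie in $\Q(\zeta_{2^{n-1}}+\zeta_{2^{n-1}}^{-1})$. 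The remaining six characters $\chi_1,\dots,\chi_6$ are rational-valued: this one reads off directly from the decomposition matrix together with the fact that the Brauer characters of a block with quaternion defect are rational (the inertial quotient being trivial or of order $3$, acting rationally), so the $\chi_i$ for $i\le 6$, being $\Z$-combinations of the $2$-modular irreducibles on $2$-regular classes and vanishing appropriately on $2$-singular ones, take values in $\Q$. This establishes (i): every character in $\Lambda$ has values in $\Q(\zeta_{2^{n-1}}+\zeta_{2^{n-1}}^{-1})\subseteq K$.

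For (ii), I would argue that the Schur index of each $\chi$ over $K$ is $1$. For the six rational characters, the Schur index over $\Q$ of an irreducible character of a $2$-group's block can only be $1$ or $2$, and a Schur index $2$ would be detected by a non-split quaternion algebra occurring in the rational group algebra; but $\Q(\zeta_{2^{n-1}}+\zeta_{2^{n-1}}^{-1})$ is a totally real field in which the local index at the prime $2$ already trivialises (the relevant cyclotomic extension splits the only obstruction, by the Benard--Schacher theory of Schur indices, which forces the index to divide the degree of $\Q(\zeta_{|G|})/\Q(\chi)$ at ramified primes), so after extending to $K\supseteq\Q(\zeta_{2^{n-1}}+\zeta_{2^{n-1}}^{-1})$ all Schur indices become $1$; the same Benard--Schacher bound handles the exceptional characters, whose values already generate $\Q(\zeta_{2^{n-1}}+\zeta_{2^{n-1}}^{-1})$. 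Combining (i) and (ii), every simple $\overline{K}\otimes\Lambda$-module is defined over $K$, i.e.\ $K\otimes\Lambda$ is split.

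The main obstacle I anticipate is the Schur-index computation: pinning down exactly which cyclotomic field is needed to kill the $2$-local Schur index of the six ``small'' characters, and making sure that $\Q(\zeta_{2^{n-1}}+\zeta_{2^{n-1}}^{-1})$ — rather than, say, $\Q(\zeta_{2^n})$ — genuinely suffices. I expect this to follow cleanly from the explicit character table of $\SL_2(q)$ (where the principal block's characters are well known to have Schur index $1$ over $\Q(\zeta_{2^{n-1}}+\zeta_{2^{n-1}}^{-1})$, indeed the unipotent and semisimple characters of $\SL_2(q)$ are rational or have values in the real cyclotomic field) transported across the perfect isometry of \cite{CabanesPicaronny}, since a perfect isometry preserves character fields and, for blocks with a common defect group, can be arranged to respect the local structure enough to carry Schur-index information; but verifying that a perfect isometry indeed preserves Schur indices in this situation is the delicate point and is presumably where the hypothesis on $K$ is used most essentially.
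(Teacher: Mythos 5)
Your proposal has a genuine gap, and it also misses the idea that makes the paper's proof short. You reduce the problem to (i) showing character values lie in $\Q(\zeta_{2^{n-1}}+\zeta_{2^{n-1}}^{-1})$ and (ii) killing Schur indices via Benard--Schacher, then attempt to transfer Schur-index information across the perfect isometry of \cite{CabanesPicaronny}. You yourself flag the last step as ``the delicate point,'' and it is in fact the hole: a perfect isometry is only an isometry of Grothendieck groups with divisibility constraints; it carries no information about Schur indices or about the division-algebra part of the Wedderburn decomposition, so this transfer is not justified. Moreover, you implicitly restrict to blocks with three simple modules (``exactly $6+l$ irreducible characters with $l=2^{n-2}-1$''), whereas the proposition is stated for \emph{every} block of defect $Q_{2^n}$, with no hypothesis on the number of simple modules. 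Finally, claim (i) as you state it is too strong: by \cite[Proposition~4.1]{Olsson} the character field is an extension of $\Q(\zeta_{2^{n-1}}+\zeta_{2^{n-1}}^{-1})$ by a (possibly nontrivial) unramified extension of $\Q_2$, not literally contained in $\Q(\zeta_{2^{n-1}}+\zeta_{2^{n-1}}^{-1})$.

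The idea you are missing is the use of the standing hypothesis that $k$ is algebraically closed. Because $k=\OO/\pi\OO$ is algebraically closed, $K$ automatically contains every unramified extension of $\Q_2$. By \cite[Corollary~31.10]{Reiner}, every finite-dimensional division algebra over $\Q_2$ is split by an unramified extension of its center; hence, over our $K$, the only obstruction to splitting $K\otimes\Lambda$ is whether $K$ contains the \emph{centers} of the Wedderburn components, i.e.\ the character fields. This dissolves the entire Schur-index discussion in one line. After that, \cite[Proposition~4.1]{Olsson} (which applies to all blocks of quaternion defect, not just those with three simple modules) says the character values lie in an unramified extension of $\Q(\zeta_{2^{n-1}}+\zeta_{2^{n-1}}^{-1})$, which is contained in $K$ by hypothesis together with the algebraic closure of $k$. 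No appeal to Cabanes--Picaronny or to Benard--Schacher is needed.
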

\begin{proof}
	By \cite[Corollary 31.10]{Reiner} every finite-dimensional division algebra $D$ over $\Q_2$ has a splitting field $E$ which is finite-dimensional and unramified over $Z(D)$.
	We can write $E$ as $F\cdot Z(D)$, where $F$ is an unramified extension of $\Q_2$. 
	 Since we assume the residue field of $\OO$ to be algebraically closed each unramified extension of $\Q_2$ is contained in $K$, and therefore $K\cdot Z(D) \supseteq E$. Hence $K\otimes \Lambda$ is split if and only if its center is split (which means that $K$ contains each field occurring in the Wedderburn decomposition of $Z(K\otimes \Lambda)$), which happens if and only if $K$ contains all values of all characters of the block $\Lambda$. By \cite[Proposition 4.1]{Olsson} the character values of $\Lambda$ are contained in the extension of some unramified extension of $\Q_2$ by
	$\zeta_{2^{n-1}}+\zeta_{2^{n-1}}^{-1}$, which is contained in $K$ by assumption.
\end{proof}

\section{The uniqueness of the lift of $\bar{\Lambda}$}

Theorem \ref{thm_unique_lift} below shows that the algebra $\bar \Lambda=Q(3\mathcal  K)^c$ lifts uniquely to a symmetric $\OO$-order if we prescribe the center. That is a key ingredient in proving the main theorem of this article. 

\begin{prop}\label{prop_opposite}
	Let $A$ be a finite-dimensional semisimple $K$-algebra,
	let $\Lambda\subset A$ be a symmetric $\OO$-order and let $e,f\in \Lambda$ be two idempotents with $ef=fe=0$. Then
	$$e\Lambda f = \{ x\in e A f \mid x \cdot f \Lambda e \subseteq e\Lambda e \}$$
\end{prop}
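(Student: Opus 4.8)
\emph{Proof plan.} The plan is to deduce everything from the symmetrizing form of $\Lambda$ and the self-duality it provides. Fix an $\OO$-linear map $\tau\colon\Lambda\to\OO$ with $\tau(ab)=\tau(ba)$ for all $a,b\in\Lambda$ whose associated bilinear form $(a,b)\mapsto\tau(ab)$ is non-degenerate over $\OO$, and extend $\tau$ $K$-linearly to $A$, noting that the identity $\tau(ab)=\tau(ba)$ persists. The first step I would carry out is to record that $\Lambda$ is self-dual inside $A$:
\[
\Lambda=\{\,a\in A\mid\tau(a\Lambda)\subseteq\OO\,\}.
\]
The inclusion ``$\subseteq$'' is clear since $\Lambda\Lambda\subseteq\Lambda$ and $\tau(\Lambda)\subseteq\OO$. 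For ``$\supseteq$'', observe that an $\OO$-basis of $\Lambda$ is simultaneously a $K$-basis of $A$ because $\Lambda$ is a full $\OO$-lattice in $A$; hence the Gram matrix of $\tau$ is the same computed over $\OO$ or over $K$, and it is invertible over $\OO$ by the definition of a symmetric order. Consequently the $K$-bilinear form on $A$ is non-degenerate, and every $\OO$-linear functional on $\Lambda$ is represented by a unique element of $\Lambda$; so if $a\in A$ satisfies $\tau(a\Lambda)\subseteq\OO$, subtracting off this representative $b\in\Lambda$ gives $\tau((a-b)\Lambda)=0$, hence $\tau((a-b)A)=0$ and $a=b\in\Lambda$.

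With self-duality in hand the proposition is short. The inclusion $e\Lambda f\subseteq\{x\in eAf\mid x\cdot f\Lambda e\subseteq e\Lambda e\}$ is immediate: for $x\in e\Lambda f$ we have $xf=x$ and $ex=x$, so $x\cdot f\Lambda e=x\Lambda e\subseteq e\Lambda e$. For the reverse inclusion, let $x\in eAf$ with $x\cdot f\Lambda e\subseteq e\Lambda e$, and use $x=exf$ together with the cyclic invariance of $\tau$: for any $\mu\in\Lambda$,
\[
\tau(x\mu)=\tau\bigl(e\cdot(xf\mu)\bigr)=\tau\bigl((xf\mu)\cdot e\bigr)=\tau\bigl(x\cdot(f\mu e)\bigr).
\]
Since $f\mu e\in f\Lambda e$, the hypothesis gives $x\cdot(f\mu e)\in e\Lambda e\subseteq\Lambda$, so $\tau(x\mu)\in\OO$. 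As $\mu\in\Lambda$ was arbitrary, self-duality yields $x\in\Lambda$, and then $x=exf\in e\Lambda f$.

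The only point that demands genuine care — and the one I would flag as the main obstacle, even though it is a standard feature of symmetric orders — is the self-duality statement: one must verify that the symmetrizing form of the \emph{order} $\Lambda$ really extends to a non-degenerate $K$-bilinear form on all of $A$, which is precisely where the hypothesis that $\Lambda$ is an $\OO$-order in the finite-dimensional algebra $A$ enters (so that $K\otimes_{\OO}\Lambda$ may be identified with $A$ and an $\OO$-basis of $\Lambda$ becomes a $K$-basis of $A$). Once this is set up, the rest is the one-line computation with $\tau$ above; note that neither semisimplicity of $A$ nor the orthogonality $ef=fe=0$ is actually used, the latter being retained only because that is the setting in which the proposition is applied.
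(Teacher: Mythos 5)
Your proof is correct and is essentially the paper's argument: the paper also takes the associative symmetric form $T$ for which $\Lambda$ is self-dual, pulls the idempotents across via associativity and symmetry to get $T(M,\Lambda)=T(M\cdot f\Lambda e,1)\subseteq\OO$, and concludes $M\subseteq\Lambda^\vee=\Lambda$; your $\tau$ is just $T(-,1)$ and your chain of equalities is the same computation written with one argument. Your closing observation that neither semisimplicity nor $ef=fe=0$ is actually used is correct and a small bonus beyond the paper's (equally quick) treatment.
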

\begin{proof}
	Set $M := \{ x\in e A f \mid x \cdot f \Lambda e \subseteq e\Lambda e \}$. Clearly $e \Lambda f \subseteq M$, and it suffices to prove 
	$M \subseteq \Lambda$. 
	Let $T:\ A\times A \longrightarrow K$ be an associative $K$-bilinear form on $A$ such that $\Lambda$ is self-dual with respect to $T$
	(associativity means $T(ab,c)=T(a,bc)$, and symmetry means $T(a,b)=T(b,a)$). Such a form exists for any symmetric order.
	Then $$T(M, \Lambda) = T(eMf,\Lambda)=T(M,f\Lambda e) = T(M\cdot f\Lambda e, 1) \subseteq T(e\Lambda e, 1) \subseteq \OO$$ (where associativity and symmetry of $T$ has been used to pull $e$ and $f$ across). So 
	$M$ is contained in the dual of $\Lambda$ with respect to $T$, which is of course again $\Lambda$.	That is, $M \subseteq \Lambda$. 
\end{proof}

\begin{thm}\label{thm_unique_lift}
	Let $\bar \Lambda$ denote the algebra $Q(3 \mathcal K)^c$ from section \ref{section_q3k} with $c=2^{n-2}$ ($3\leq n \in \N$ arbitrary) and let
	\begin{equation}
	A = K\oplus K \oplus K \oplus K^{3\times 3} \oplus K^{2\times 2} \oplus K^{2\times 2} \oplus 
	\bigoplus_{i=7}^{6+l} K^{2\times 2}
	\end{equation}
	where $l=2^{n-2}-1$.
	Denote by $\eps_1,\ldots, \eps_{6+l}$ the  primitive idempotents in $Z(A)$ with the natural choice of indices.
	
	Let $\Lambda\subset A$ and $\Gamma\subset A$ be two $\OO$-orders such that all of the following holds:
	\begin{enumerate}
		\item $k\otimes \Lambda \cong k\otimes \Gamma \cong \bar\Lambda$
		\item $\Lambda$ and $\Gamma$ are symmetric
		\item $\Lambda$ and $\Gamma$ both have the decomposition matrix given in \eqref{decomp} (note: the order of the rows is fixed by the inclusion of $\Lambda$ resp. $\Gamma$ into $A$)
		\item $Z(\Lambda) = Z(\Gamma)$
	\end{enumerate}
	Then $\Lambda$ and $\Gamma$ are conjugate in $A$. 
\end{thm}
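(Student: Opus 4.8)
The plan is to show that all the structure of $\Lambda$ is forced by the four hypotheses, so that any two such orders can be conjugated onto each other. I would proceed block by block through the idempotents. First, fix the idempotent decomposition: by lifting idempotents, we may assume $\widehat e_1, \widehat e_2, \widehat e_3 \in \Lambda$ is a full set of orthogonal primitive idempotents reducing to $e_1, e_2, e_3$, and after conjugating $\Gamma$ by a unit of $A$ (which we are allowed to do) we may assume $\Gamma$ contains the \emph{same} three idempotents. So from now on everything is pinned down relative to a fixed $\widehat e_1, \widehat e_2, \widehat e_3$, and both $\Lambda$ and $\Gamma$ sit inside $A$ with $\widehat e_i A \widehat e_j$ the obvious matrix blocks.

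The crucial observation is that Proposition~\ref{prop_epi_center} (via $e_i \bar\Lambda e_i = e_i Z(\bar\Lambda) e_i$) together with the argument in Proposition~\ref{prop_crit_decomp_01} shows that the top arrow in that commutative square is surjective, i.e. $\widehat e_i \Lambda \widehat e_i$ is the image of $Z(\Lambda)$ under multiplication by $\widehat e_i$. Since $Z(\Lambda) = Z(\Gamma)$ by hypothesis (4), this gives immediately $\widehat e_i \Lambda \widehat e_i = \widehat e_i \Gamma \widehat e_i$ for all $i$ — the ``diagonal blocks'' of $\Lambda$ and $\Gamma$ already coincide, \emph{on the nose}, with no conjugation needed. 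Next I handle the off-diagonal pieces $\widehat e_i \Lambda \widehat e_j$ for $i \neq j$. By Proposition~\ref{prop_cyclic_generated}, $e_i \bar\Lambda e_j$ is generated as a left $e_i\bar\Lambda e_i$-module by the single arrow from $i$ to $j$; lifting a generator and using Nakayama (exactly as for the diagonal blocks), $\widehat e_i \Lambda \widehat e_j$ is a cyclic left $\widehat e_i\Lambda\widehat e_i$-module, say generated by $x_{ij} \in \widehat e_i A \widehat e_j$; likewise $\widehat e_i \Gamma \widehat e_j$ is generated by some $y_{ij}$. Since the diagonal blocks already agree, $\widehat e_i \Lambda \widehat e_j = (\widehat e_i\Lambda\widehat e_i)\cdot x_{ij}$ and $\widehat e_i \Gamma \widehat e_j = (\widehat e_i\Lambda\widehat e_i)\cdot y_{ij}$ as subsets of the same one-dimensional-over-$K$ ... no, it is not one-dimensional, but $x_{ij}$ and $y_{ij}$ differ by an element of $\widehat e_i A \widehat e_i$ that is a unit modulo radical (both reduce to the arrow, up to a scalar in $k^\times$), hence a unit in $\widehat e_i A \widehat e_i$; scaling, $y_{ij} = u_{ij} x_{ij}$ with $u_{ij} \in (\widehat e_i\Lambda\widehat e_i)^\times$. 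This is where Proposition~\ref{prop_opposite} enters and does the real work: it characterizes $\widehat e_i \Lambda \widehat e_j$ intrinsically as $\{x \in \widehat e_i A \widehat e_j \mid x\cdot \widehat e_j\Lambda\widehat e_i \subseteq \widehat e_i \Lambda \widehat e_i\}$, and symmetrically for $\Gamma$; since the diagonal blocks of $\Lambda$ and $\Gamma$ agree, this characterization shows that $\widehat e_i\Lambda\widehat e_j$ and $\widehat e_i\Gamma\widehat e_j$ are each \emph{determined} by the pair of opposite off-diagonal blocks, reducing the problem to matching up the $x_{ij}$ with the $y_{ij}$ consistently.

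The conjugation is then built explicitly. Writing $u = \widehat e_1 + u_{12}^{-1}? \dots$ — more precisely, I look for a unit of the form $v = \widehat e_1 + v_2 + v_3$ with $v_i \in (\widehat e_i\Lambda\widehat e_i)^\times$ such that conjugation by $v$ sends each generator $x_{ij}$ to a scalar multiple of $y_{ij}$; because the diagonal blocks commute (they equal $\widehat e_i Z(\Lambda)\widehat e_i$!) and are already fixed by any such $v$, the effect of conjugation on $x_{ij}$ is just left-multiplication by $v_i$ and right-multiplication by $v_j^{-1}$, so I need $v_i x_{ij} v_j^{-1}$ to generate $\widehat e_i\Gamma\widehat e_j$. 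There are three off-diagonal directions ($1\!\to\!2$, $2\!\to\!3$, $1\!\to\!3$, and their reverses), giving a small system of equations in $v_2, v_3$ (with $v_1 = \widehat e_1$ fixed); one solves for $v_2$ from the $1\!\to\!2$ equation, for $v_3$ from the $1\!\to\!3$ equation, and then the $2\!\to\!3$ equation has to be \emph{checked} to be automatically satisfied — and this is the one place where I expect to have to use the symmetry hypothesis (2) and the precise shape of the relations \eqref{eq_def_rel} via Proposition~\ref{prop_opposite}, because consistency of the third direction is not free. Concretely, $\widehat e_2\Lambda\widehat e_3$ is determined by $\widehat e_3\Lambda\widehat e_2$ through Proposition~\ref{prop_opposite}, and $\widehat e_3\Lambda\widehat e_2$ is in turn determined by $\widehat e_2\Lambda\widehat e_3$, but going around the triangle $\widehat e_2 \Lambda \widehat e_3 \cdot \widehat e_3\Lambda\widehat e_1 \subseteq \widehat e_2\Lambda\widehat e_1$ and the reverse inclusions pin down the last free parameter; since $\Lambda$ is symmetric the form $T$ is symmetric, which is exactly what makes the "going around the triangle" relations symmetric between $\Lambda$ and $\Gamma$, forcing the final compatibility.

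The main obstacle, then, is not the diagonal blocks (those are handed to us by the center hypothesis) nor individually the off-diagonal blocks, but the \emph{global consistency} of conjugating all three directions simultaneously: one has two free choices $v_2, v_3$ but effectively three constraints, and the argument that the third is redundant is the heart of the proof. I expect this to come down to combining Proposition~\ref{prop_opposite} applied to all three pairs $(\widehat e_i,\widehat e_j)$ with a counting/rank argument showing each off-diagonal block has the expected $\OO$-rank (readable from the decomposition matrix \eqref{decomp} and the Cartan matrix $C$ of Proposition~\ref{prop_decomp_matrix}), so that the cyclic generator is unique up to a unit of the diagonal block and no further freedom hides in the off-diagonal blocks. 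Once the unit $v$ is constructed, $v\Lambda v^{-1} = \Gamma$ follows because both are generated as $\OO$-algebras by their diagonal blocks (which $v$ fixes) together with the off-diagonal generators (which $v$ matches), completing the proof.
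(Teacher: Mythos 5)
Your outline matches the paper's proof in its broad strokes: lift and match idempotents, use $Z(\Lambda)=Z(\Gamma)$ plus Proposition~\ref{prop_epi_center} to get $\widehat e_i\Lambda\widehat e_i=\widehat e_i\Gamma\widehat e_i$ on the nose, use Proposition~\ref{prop_cyclic_generated} to reduce the off-diagonal blocks to cyclic generators, conjugate by a ``diagonal'' unit of $A$, and invoke Proposition~\ref{prop_opposite} to get the reverse directions for free. However, there is a genuine gap in the step you yourself flag as ``the heart of the proof'': the consistency check for the third off-diagonal direction. You guess that it follows from Proposition~\ref{prop_opposite} together with ``a counting/rank argument''. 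That cannot work. The $\OO$-rank of each off-diagonal block is determined by the decomposition matrix and is automatically the same for $\Lambda$ and $\Gamma$; the issue is the \emph{position} of the lattice $\widehat e_i\Lambda\widehat e_j$ inside $\widehat e_i A\widehat e_j$, concretely a power of $\pi$ in the $\eps_4$-component. The paper isolates this neatly: for every Wedderburn component $\eps_m$ with $m\neq 4$, at most one of the three directions $(1,2),(2,3),(3,1)$ projects nonzero (each such row of \eqref{decomp} has at least one $0$), so on those components the three matching conditions decouple and can be solved independently. All three directions meet only in $\eps_4$, where the constraint becomes $d_{12}+d_{23}+d_{31}=0$ in the notation $\eps_4 v_{ij}=\pi^{d_{ij}}\eps_4 w_{ij}$. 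The paper proves this equality using the \emph{specific quiver fact} that $\beta\delta\lambda\neq 0$ in $\bar\Lambda$, which forces $\widehat e_1\Lambda\widehat e_2\Lambda\widehat e_3\Lambda\widehat e_1\not\subseteq\pi\Lambda$, and a Nakayama-type argument comparing the generators $v_{12}v_{23}v_{31}$ and $w_{12}w_{23}w_{31}$ of that module inside the common diagonal block $\widehat e_1\Lambda\widehat e_1=\widehat e_1\Gamma\widehat e_1$. Neither Proposition~\ref{prop_opposite} nor symmetry nor a rank count supplies this; you need the concrete combinatorics of $Q(3\mathcal K)^c$ from section~\ref{section_q3k}.

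A secondary but real slip: you write $y_{ij}=u_{ij}x_{ij}$ with $u_{ij}\in(\widehat e_i\Lambda\widehat e_i)^\times$. If that were true, then $\widehat e_i\Gamma\widehat e_j=(\widehat e_i\Lambda\widehat e_i)u_{ij}x_{ij}=\widehat e_i\Lambda\widehat e_j$ already, and there would be nothing left to prove; the correct statement is $u_{ij}\in(\widehat e_i A\widehat e_i)^\times$, which is where the nontrivial content lives. Finally, note that you do not actually need the Proposition~\ref{prop_opposite} ``intrinsic characterization'' to reduce to the forward directions \emph{before} the conjugation step, as you suggest — the paper uses it only at the very end, after the forward directions have been matched, to conclude $\widehat e_j\Lambda\widehat e_i=\widehat e_j\Gamma\widehat e_i$; this makes the logical flow cleaner and avoids entangling the characterization with the consistency argument.
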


\begin{proof}
	First let us fix an isomorphism $\psi:\ k\otimes \Lambda \longrightarrow \bar{\Lambda}$. That fixes labels for the simple
	$k\otimes \Lambda$-modules, since the simple $\bar \Lambda$-modules correspond to the vertices of the quiver $Q$ from section \ref{section_q3k}, which where labeled $1$, $2$ and $3$. Without loss of generality we may choose $\psi$ in such a way that the decomposition matrix of $\Lambda$ with respect to this labeling is equal to 
	\eqref{decomp} with the order of both rows and columns being fixed. This can be done since $\bar{\Lambda}$ has sufficiently automorphisms (that is easy to check).
	Choose orthogonal primitive idempotents $\widehat e_1, \widehat e_2, \widehat e_3$ in $\Lambda$ such that 
	$\psi(1_k \otimes \widehat e_i) = e_i$ for $i\in\{1,2,3\}$. Do the analogous things for $\Gamma$ to obtain 
	$\psi', \widehat f_1, \widehat f_2$ and $\widehat f_3$. 
	The equality of the decomposition matrices implies that the systems of orthogonal idempotents $\widehat e_1,\widehat e_2,\widehat e_3$ and $\widehat f_1,\widehat f_2,\widehat f_3$ are conjugate within $A$ (as the entries of the decomposition matrix determine the ranks of those idempotents in each matrix ring summand of $A$). We can therefore replace $\Lambda$ by 
	$u_1\Lambda u_1^{-1}$ for an appropriately chosen unit $u_1\in A$, and assume that $\widehat e_i = \widehat f_i$ for 
	$i\in \{1,2,3\}$.
	
	The fact that $e_i\bar \Lambda e_i = e_i Z(\bar\Lambda) e_i$ implies that $\widehat e_i \Lambda \widehat e_i = \widehat e_i Z(\Lambda) \widehat e_i$ (and the same for $\Gamma$). To see this first note that 
	for any order there is an embedding $k\otimes Z(\Lambda) \hookrightarrow  Z(k\otimes\Lambda)$. We have 
	$\dim_k k\otimes Z(\Lambda) = \rank_\OO Z(\Lambda) = \dim_K Z(A) = 6+l$, which is equal to $\dim_k(Z(k\otimes \Lambda))=\dim_k Z(\bar{\Lambda})$, and therefore the aforementioned embedding is in fact an isomorphism.
	Since we assume that $Z(\Lambda)=Z(\Gamma)$ (as subsets of $Z(A)$), we can conclude that 
	\begin{equation}
	\widehat e_i \Lambda \widehat e_i = \widehat e_i Z(\Lambda) \widehat e_i = \widehat f_i Z(\Gamma) \widehat f_i
	= \widehat f_i \Gamma \widehat f_i
	\end{equation}
	for all $i\in\{1,2,3\}$.

	Let $i\neq j\in \{1,2,3\}$. We know that $e_i\bar\Lambda e_j$ is generated by a single element as a left $e_i \bar{\Lambda}e_i$-module. Hence the same is true 
	for $\widehat e_i\Lambda \widehat e_j$ and $\widehat e_i\Gamma\widehat e_j$ as left $\widehat e_i \Lambda \widehat e_i$-modules respectively $\widehat e_i \Gamma \widehat e_i$-modules. Let $v_{ij}$ respectively $w_{ij}$ denote generators for  $\widehat e_i\Lambda \widehat e_j$ respectively $\widehat e_i\Gamma\widehat e_j$. 
	
	Since all decomposition numbers are zero or one, it follows that the $K$-vector space  $\eps_m \widehat e_i A \widehat e_j$ 
	is at most one-dimensional for each central primitive idempotent $\eps_m \in Z(A)$ and all $i,j\in\{1,2,3\}$.
	Since  $\eps_4 \widehat e_i A \widehat e_j \iso K$ for all $i,j\in\{1,2,3\}$, and because we can replace $v_{ij}$ by $o\cdot v_{ij}$ for a unit $o\in \OO^\times$, we can ask that $\eps_4 v_{ij}$ be equal to $\pi^{d_{ij}}\cdot  \eps_4 w_{ij}$ for certain numbers $d_{ij} \in \Z$, where $\pi$ denotes a (fixed) generator of the maximal ideal of $\OO$. 
	
	Consider an element of the form
	$$
	u_2 := \sum_{i=1}^{6+l}\sum_{j=1}^3 c_{ij} \cdot \eps_i \widehat e_j
	$$
	where the $c_{ij}$ are parameters in $K^\times$ yet to be determined. Note that some of those parameters are superfluous since some of the products $\eps_i \widehat e_j$ are zero (we include those superfluous parameters only because it simplifies notation). We will try to choose the $c_{ij}$ in such a way that
	$(1-\eps_4)\cdot  u_2 v_{ij} u_2^{-1} = (1-\eps_4)\cdot w_{ij}$ for all pairs $(i,j)\in \{(1,2),(2,3),(3,1)\}$. 
	
	We will take care of the fourth Wedderburn component later on, and therefore we set 
	$c_{41}=c_{42}=c_{43}=1$. Note that for each Wedderburn-component except for the fourth there is at most one pair 
	$(i,j)\in \{(1,2),(2,3),(3,1)\}$ such that $v_{ij}$ and $w_{ij}$ projected to that component is non-zero, as all rows of the decomposition matrix \eqref{decomp} except for the fourth have at least one zero in them. 
	Seeing how $\eps_m \cdot u_2  \cdot v_{ij} \cdot u_2^{-1} = \frac{c_{mi}}{c_{mj}} \cdot \eps_m\cdot v_{ij}$ for each $m\in\{1,\ldots,6+l\}$, it follows that we can choose the $c_{mi}$ such that $\eps_m \cdot u_2 \cdot v_{ij} \cdot u_2^{-1} = \eps_m \cdot w_{ij}$ for all $4\neq m\in\{1,\ldots,6+l\}$ and for all $(i,j)\in \{(1,2),(2,3),(3,1)\}$ (where the pair $(i,j)$ is uniquely determined by $m$, as we just discussed). Without loss of generality we will replace $\Lambda$ by 
	$u_2 \Lambda u_2^{-1}$. Hence, we now have $(1-\eps_4)\cdot v_{ij}=(1-\eps_4)\cdot w_{ij}$ for all $(i,j)\in \{(1,2),(2,3),(3,1)\}$.
	
	Now consider the product $\beta\delta\lambda\in e_1\bar{\Lambda} e_1$. This is non-zero, which implies that 
	\begin{equation}
	\widehat e_1 \Lambda \widehat e_2 \Lambda \widehat e_3 \Lambda \widehat e_1 \not\subseteq \pi \cdot\Lambda
	\end{equation}
	On the other hand $v_{12}\cdot v_{23}\cdot v_{31}$ generates this (left or right) $\widehat e_1 \Lambda \widehat e_1$-module, since
	\begin{equation*}
	\begin{array}{rcl}
	\widehat e_1 \Lambda \widehat e_2 \Lambda \widehat e_3 \Lambda \widehat e_1 &=& \widehat e_1 \Lambda \widehat e_1 v_{12} \cdot \widehat e_2 \Lambda \widehat e_2 v_{23} \cdot \widehat e_3 \Lambda \widehat e_3 v_{31} \\ 
	&=& \widehat e_1 Z(\Lambda) \widehat e_1 v_{12} \cdot \widehat e_2 Z(\Lambda) \widehat e_2 v_{23} \cdot \widehat e_3 Z(\Lambda) \widehat e_3 v_{31} \\&=&Z(\Lambda) \cdot v_{12}v_{23}v_{31}
	\end{array}
	\end{equation*} 
	The analogous statement holds for $w_{12}\cdot w_{23}\cdot w_{31}$. By construction we have
	$v_{12}\cdot v_{23}\cdot v_{31} = \pi^{d_{12}+d_{23}+d_{31}}\cdot w_{12}\cdot w_{23}\cdot w_{31}$ (by looking at the decomposition matrix we see that this element has a non-zero entry only in the fourth Wedderburn component, so this follows immediately from the definition of the $d_{ij}$).  It follows that	 $d_{12}+d_{23}+d_{31}=0$, since if it were greater than zero, then $v_{12}\cdot v_{23}\cdot v_{31}$ would lie in $\pi\cdot\Lambda$, as $w_{12}\cdot w_{23}\cdot w_{31}$ already lies in 
	$\widehat e_1 \Gamma \widehat e_1 = \widehat e_1 \Lambda \widehat e_1 \subset \Lambda$. And by swapping the roles of
	$\Lambda$ and $\Gamma$ in this argument, we can also conclude that $d_{12}+d_{23}+d_{31}$ cannot be smaller than zero. 
	
	Now we can use conjugation by the element
	$$
	u_3 = (1 - \eps_4) + \eps_4\cdot (\widehat e_1 + \pi^{d_{12}}\widehat e_2 + \pi^{d_{12}+d_{23}}\widehat e_3 )
	$$
	By construction $(1-\eps_4)\cdot u_3 v_{ij}u_3^{-1}
	=(1-\eps_4)\cdot v_{ij}$ for all $i,j$, and we have already shown that $(1-\eps_4)\cdot v_{ij} = (1-\eps_4)\cdot w_{ij}$ for all $(i,j)\in\{(1,2),(2,3),(3,1)\}$. Moreover, 
	$\eps_4\cdot  u_3 v_{12}u_3^{-1} = \eps_4 w_{12}$, 
	$\eps_4\cdot  u_3 v_{23}u_3^{-1} = \eps_4 w_{23}$
	and $\eps_4\cdot  u_3 v_{31}u_3^{-1} = \pi^{d_{12}+d_{23}+d_{31}}\cdot \eps_4 w_{31}= \eps_4 w_{31}$. Hence we can conclude that $u_3\cdot v_{ij}\cdot u_3^{-1}=w_{ij}$ for all $(i,j)\in\{(1,2),(2,3),(3,1)\}$.
	Replace $\Lambda$ by $u_3\Lambda u_3^{-1}$.
	
	Now we have $\widehat e_i\Lambda \widehat e_i = \widehat e_i\Gamma \widehat e_i$ for all $i\in\{1,2,3\}$ and 
	$\widehat e_i\Lambda \widehat e_j = \widehat e_i\Gamma \widehat e_j $ for all $(i,j)\in\{(1,2),(2,3),(3,1)\}$.
	By the assumption that $\Lambda$ and $\Gamma$ are symmetric, and using Proposition \ref{prop_opposite} it follows that
	$\widehat e_i\Lambda \widehat e_j = \widehat e_i\Gamma \widehat e_j $ for all $(i,j)\in\{(2,1),(3,2),(1,3)\}$ as well.
	Therefore we get $\Lambda=\Gamma$, which finishes the proof.
\end{proof}

We should remark at this point that the condition ``$Z(\Lambda) = Z(\Gamma)$'' in the previous theorem is quite strong, 
and certainly stronger than merely asking that the centers should be isomorphic.
Hence, in order to apply Theorem \ref{thm_unique_lift} to blocks of quaternion defect, we must first study to which extent the perfect isometries between them provide us with information on the embedding of the center into the Wedderburn-decomposition of the block. That is what the next section is about.

\section{Perfect self-isometries of $B_0(\OO \SL_2(q))$ for $q\equiv 3 \mod 4$}

In this section we will study self-isometries preserving projectivity and perfect self-isometries of $B_0(\OO \SL_2(q))$ for $q\equiv 3 \mod 4$. 
For the most part we restrict our attention to the case $n>3$ ($n$ being the $2$-valuation of $q^2-1$), but in the case $n=3$ the main 
result of this section, Corollary \ref{corollary_iso_factorization}, is just the same as
\cite[Proposition 1.1]{KessarQuat}. 
Since later on we will have to deal with orders which are not a priori known to be blocks of groups rings, we are going to formulate 
Proposition \ref{prop_self_iso} below in a slightly more general setting than needed for the purposes of this section.

\begin{prop}[Isometries preserving projectivity]\label{prop_self_iso}
	Let $\Lambda$ be an $\OO$-order with split-semisimple 
	$K$-span and decomposition matrix as given in \eqref{decomp}. 
	We index the elements 
	of $\Irr_K(K\otimes \Lambda)$ by the numbers $\{1,\ldots,6+l\}$ using the same ordering as for the rows of the decomposition matrix \eqref{decomp}. We write self-isometries of the Grothendieck group $K_0(K\otimes\Lambda)$ as signed permutations, using the following convention for a signed cycle: $(i_1,\ldots, i_k)$ denotes the permutation that sends $|i_j|$ to $i_{j+1}$ for each $j\in \Z/k\Z$. 
	
	Denote the projective indecomposable $\Lambda$-modules 
	belonging to the first, second and third column of \eqref{decomp} by $P_1$, $P_2$ and $P_3$, respectively.
	Let $\varphi:\ K_0(K\otimes \Lambda)\longrightarrow K_0(K\otimes \Lambda)$ be an isometry that 
	maps the $\Z$-lattice $\langle [K\otimes P] \mid \textrm{$P$ projective $\Lambda$-module} \rangle_{\Z}$ onto itself. Then 
	\begin{equation} \label{eqn_generators_selfiso}
		\varphi \in \langle \pm \id, (2, 3)(5, 6),\ (-1, -4)(2, 3)(-5)(-6),\ (2, 4)(-1, -3)(-6)  \rangle \cdot {\rm Sym}(\{7,\ldots,6+l\})
	\end{equation}
\end{prop}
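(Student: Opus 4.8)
The strategy is to translate the condition on $\varphi$ into a system of combinatorial constraints coming from the decomposition matrix \eqref{decomp}, solve that system, and then check that the resulting group of solutions is exactly the one listed in \eqref{eqn_generators_selfiso}. First I would set up coordinates: let $\chi_1,\ldots,\chi_{6+l}$ be the orthonormal basis of $K_0(K\otimes\Lambda)$ given by $\Irr_K(K\otimes\Lambda)$, so that an isometry is precisely a signed permutation matrix. The lattice $L := \langle [K\otimes P] \mid P \text{ projective}\rangle_\Z$ is spanned by the three columns of \eqref{decomp}, call them $p_1 = \chi_1+\chi_4+\chi_5+\chi_6$, $p_2 = \chi_2+\chi_4+\chi_5+\sum_{r=1}^l\chi_{6+r}$, $p_3 = \chi_3+\chi_4+\chi_6+\sum_{r=1}^l\chi_{6+r}$. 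The hypothesis is that $\varphi(L) = L$; equivalently $\varphi$ permutes, up to sign and $\Z$-linear combination, a lattice with a very rigid shape. The key observation is that $L$ and its behaviour under $\varphi$ control $\varphi$ almost completely because the Cartan matrix $C = D^\top D$ is preserved, but more usefully because the ``small'' vectors of $L$ and of the ambient lattice $\Z^{6+l}$ are highly constrained.

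The main combinatorial input I would extract is: the only indices $i$ for which $\chi_i$ (or $-\chi_i$) can possibly lie in $L$, or be hit from $L$ in a controlled way, are distinguished by how the rows of $D$ interact. Concretely, rows $7,\ldots,6+l$ are all equal to $(0,1,1)$, while rows $1,\ldots,6$ are pairwise distinct. Since $\varphi$ must be an isometry preserving $L$, it must preserve $L^{\perp\perp}$-type invariants and in particular must preserve the set of coordinate directions that appear with a given multiplicity pattern among the generators $p_1,p_2,p_3$ of $L$ — for instance $\chi_4$ appears in all three generators, $\chi_5$ in $p_1,p_2$, $\chi_6$ in $p_1,p_3$, each $\chi_{6+r}$ in $p_2,p_3$, and $\chi_1,\chi_2,\chi_3$ each in exactly one. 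I would make this precise by passing to $\bar L := L/2L$ or by looking at which $\pm(\chi_i \pm \chi_j)$ lie in $L$, and deduce that $\varphi$ must (up to sign) stabilise the block $\{7,\ldots,6+l\}$ setwise — contributing the factor ${\rm Sym}(\{7,\ldots,6+l\})$ — and permute $\{1,2,3,4,5,6\}$ among themselves in a way compatible with the incidence structure above. This reduces the problem to a finite check on the $6\times 3$ top part of $D$.

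For the finite part, I would enumerate the signed permutations $\sigma$ of $\{1,\ldots,6\}$ (together with a choice of how the $\{7,\ldots,6+l\}$-sum is permuted, but since those rows are identical only the setwise stabiliser matters) such that the transported generators $\sigma(p_1),\sigma(p_2),\sigma(p_3)$ again lie in $L$, i.e. are $\Z$-combinations of $p_1,p_2,p_3$. Each such condition is linear over $\Z$, and the incidence constraints from the previous paragraph cut the search down to a handful of cases. I expect to find that the group of admissible $\sigma$ is generated by $\pm\id$ together with the three explicit elements $(2,3)(5,6)$, $(-1,-4)(2,3)(-5)(-6)$ and $(2,4)(-1,-3)(-6)$; one verifies directly that each of these three does preserve $L$ (a short check that it sends each $p_i$ into $L$) and that together with $\pm\id$ and ${\rm Sym}(\{7,\ldots,6+l\})$ they already account for every admissible signed permutation found in the enumeration. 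This establishes \eqref{eqn_generators_selfiso}.

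\textbf{Expected main obstacle.} The delicate point is not the existence direction — checking that the four listed generators preserve $L$ is routine — but the completeness direction: ruling out all other signed permutations. The subtlety is that $\varphi$ need not permute the generators $p_1,p_2,p_3$ themselves, only the lattice they span, so one cannot simply match columns of $D$ up to sign; one must genuinely argue about the full lattice $L$. Getting a clean invariant that pins down the setwise stabiliser of $\{7,\ldots,6+l\}$ and the allowed sign patterns on $\{1,4,5,6\}$ — presumably via reduction mod $2$ and the bilinear form, or via the short vectors of $L$ and of $L^\sharp$ (the dual coindex lattice) — is where the real work lies, and where a careless case analysis could either miss a solution or admit a spurious one.
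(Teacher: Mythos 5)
Your plan is sound in outline but stops short of being a proof: the decisive step, the finite enumeration, is deferred (``I expect to find\ldots''), and it is exactly that enumeration which is the content of the proposition. The paper's argument goes by a more elementary and more hands-on route than the lattice-invariant approach you sketch. Since $\varphi(L)=L$ one may write $\varphi([P_i])=\sum_j a_{i,j}[P_j]$, and the paper pins down the $a_{i,j}$ directly: first $a_{i,j}=(\chi_j,\varphi([P_i]))=(\varphi^{-1}(\chi_j),[P_i])\in\{-1,0,1\}$ because all decomposition numbers are $\leq 1$ and $\varphi^{-1}$ sends $\chi_j$ to $\pm$ an irreducible; second $a_{i,j}+a_{i,j'}\in\{-1,0,1\}$ by the same argument applied to a $\chi$ lying in exactly $P_j$ and $P_{j'}$ (namely $\chi_5$, $\chi_6$, or any $\chi_{6+r}$). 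These two constraints alone force each $\varphi([P_i])$ to have coefficient pattern $(\pm1,0,0)$ or $(\pm1,\mp1,0)$ up to permutation, and the norm computed from the Cartan matrix then sorts out which patterns are admissible images of which $[P_i]$. The proof finishes with an explicit table listing all eight resulting triples (up to global sign) and exhibiting, for each, a concrete signed permutation on $\{1,\ldots,6+l\}$ realizing it; the residual ambiguity is precisely $\operatorname{Sym}(\{7,\ldots,6+l\})$, which is how the factor in \eqref{eqn_generators_selfiso} arises.

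Two of your proposed invariants are actually unhelpful as stated. No vector of the form $\pm(\chi_i\pm\chi_j)$ lies in $L$ (the generators $p_1,p_2,p_3$ all have four or more nonzero coordinates and no $\Z$-combination reduces to a two-coordinate vector), so that criterion extracts no information. And reducing $L$ mod $2$ forgets the signs, so by itself it cannot rule out mixed sign patterns on $\{7,\ldots,6+l\}$; yet the paper's table shows the admissible sign pattern there is always uniform ($+\mathrm{id}$ or $-\mathrm{id}$ on the whole block $\{7,\ldots,6+l\}$, never a mixture). That uniformity is a nontrivial constraint that falls out of the explicit enumeration but would need a separate argument in your framework. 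The mod-$2$ idea does give a clean proof that the block $\{7,\ldots,6+l\}$ is stabilised setwise when $l\geq 2$ (the row $(0,1,1)$ has multiplicity $l>1$ in $D$, so the induced element of $\operatorname{GL}_3(\F_2)$ must fix it), which is a nice alternative to the paper's norm argument for that particular sub-claim; but to complete your proof you would still have to carry out the full finite check on $\{1,\ldots,6\}$ and the sign analysis on $\{7,\ldots,6+l\}$, and that is what is missing.
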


\begin{proof}
	Clearly $\varphi$ induces a permutation of $\pm\Irr(K\otimes \Lambda)$.
	If $[V]\in \Irr(K\otimes \Lambda)$, then $(\varphi ([P_i]), [V]) = ([P_i], \varphi^{-1}([V])) \in \{-1,0,1\}$, where we made use of the fact that all decomposition numbers of
	$\Lambda$ are $\leq 1$. 
	For each $[P_i]$ there is a $[V_i]\in \Irr(K\otimes \Lambda)$ which occurs with multiplicity $1$ in $[P_i]$ and with multiplicity $0$ in each $[P_j]$ with $i\neq j$.
	If we write $\varphi([P_i])=a_{i,1}[P_1]+a_{i,2}[P_2]+a_{i,3}[P_3]$, then $a_{i,j}=([V_j], \varphi([P_i)])=(\varphi^{-1}([V_j]), [P_i])\in \{-1,0,1\}$. In the same vein, for each $j\neq j' \in \{1,2,3\}$ we have a $[V]\in \Irr(K\otimes \Lambda)$ such that $([P_l],[V])=1$ if and only if $l\in \{j,j'\}$ and
	$([P_l],[V])=0$ for the unique $l\in \{1,2,3\}-\{j,j'\}$.
	Then $a_{i,j}+a_{i,j'}=([V], \varphi([P_i]))=(\varphi^{-1}([V]), [P_i])\in\{-1,0,1\}$.
	Hence the only possibilities for $\varphi([P_i])$ are 
	linear combinations of $[P_1]$, $[P_2]$ and $[P_3]$ with coefficients $0$, $1$ and $-1$, where either only one coefficient is non-zero, or one coefficient is equal to zero, one is equal to $+1$ and one is equal to $-1$.
	Let $[P] = a [P_1]+b[P_2]+c[P_3]$. Then
	\begin{equation}
	\begin{array}{rcl}
	([P],[P]) &=& \left(\begin{array}{ccc}a&b&c\end{array}\right)\cdot \left( \begin{array}{ccc}
	4 & 2 & 2 \\ 2 & 2+2^{n-2} & 2^{n-2} \\ 2 & 2^{n-2} & 2+2^{n-2}
	\end{array}\right)\cdot \left(\begin{array}{c}a \\ b\\ c\end{array}\right)
	\\ \\
	&=& 4a^2 + (2+2^{n-2})\cdot (b^2+c^2) + 4ab + 4ac + 2^{n-1}bc
	\end{array}
	\end{equation}
	By symmetry we only have to check the cases $(a,b,c)=(1,-1,0)$ and $(a,b,c)=(0,1,-1)$. We get 
	$([P], [P])= 2+2^{n-2}$ respectively $([P], [P])= 4$. As $(\varphi([P_i]), \varphi([P_i]))=([P_i],[P_i])$ is equal to $4$ if $i=1$ and equal to $2+2^{n-2}$ if $i\in\{1,2\}$ it follows that $\pm([P_1]-[P_{2}])$ and $\pm([P_1]-[P_3])$ are suitable images for both $[P_2]$ and $[P_3]$, and $\pm([P_2]-[P_3])$ is a suitable image for $[P_1]$.
	
	Now we list all possible images of the triple $[P_1]$, $[P_2]$ and $[P_3]$ which stabilize the Cartan matrix, which 
	means that they may be induced by an isometry $\varphi$, but we will still have to find a signed permutation on $\pm 
	\Irr_K(K\otimes \Lambda)$ that induces them. We give such a permutation in each case. 
	\begin{equation}\label{eqn_all_elements}
	\begin{array}{llll}
		\varphi([P_1]) & \varphi([P_2]) & \varphi([P_3]) & \textrm{Signed permutation on $\{1,\ldots,6+l\}$}  \\
		  \cmidrule[1.2pt]{1-4}
		\pm [P_1] & \pm[P_2] & \pm[P_3] & \pm \id \\
		& \pm[P_3] & \pm[P_2] & \pm(2,3)(5,6) \\
 		& \pm ([P_1]-[P_2]) & \pm([P_1]-[P_3]) & \pm (1,4)(-2)(-3)(5,6)(-\id_{\{7,\ldots,6+l\}})\\
		& \pm ([P_1]-[P_3]) & \pm([P_1]-[P_2]) & \pm (1,4)(-2,-3) (-\id_{\{7,\ldots,6+l\}})\\
		\pm ([P_2]-[P_3]) & \pm [P_2] & \mp ([P_1]-[P_2]) & \pm (2,4)(-1,-3)(-6) \\
		& \mp[P_3] & \pm ([P_1]-[P_3]) & \pm (1,2,-4,-3)(5,-6)(-\id_{\{7,\ldots,6+l \}}) \\
		& \mp ([P_1]-[P_2]) & \pm [P_2 ] & \pm (-1,-3,4,2)(5,-6)\\
		& \pm ([P_1]-[P_3]) & \mp[P_3] & \pm (1,2)(-3,-4)(-6)(-\id_{\{ 7,\ldots,6+l \}})
	\end{array}
	\end{equation}
	Note that the given signed permutations are unique up to composition with permutations which fix $[P_1]$, $[P_2]$ and $[P_3]$. That are precisely all the permutations that fix $\{1,\ldots, 6\}$ point-wise. This proves the lemma (it is easy to verify that the elements given in \eqref{eqn_generators_selfiso} are generators for the group we just determined).
\end{proof}

Now we are interested in the question which of these self-isometries are actually perfect if we pick $\Lambda = B_0(\OO \SL_2(q))$.
For the rest of the section we will fix some prime $q$ with $q \equiv 3 \mod 4$ and assume that $K\otimes B_0(\OO\SL_2(q))$ is split, which is equivalent to asking that $K\supseteq \Q(\zeta_{2^{n-1}}+\zeta_{2^{n-1}}^{-1})$ ($n$ is the $2$-valuation of $q^2-1$).

\begin{remark}[Character table]\label{remark_chartab}
	We are going to need the character table of $\SL_2(q)$, which can be found in
	\cite[Table 5.4]{Bonnafe}, to check perfectness of characters. The table below is the part we are interested in, namely the characters which lie in 
	the principal $2$-block of $\SL_2(q)$. We already replaced 
	those character values for which there is a 
	more explicit description due to our constraints on $q$ (concretely: $q_0=-q$, $\theta_0(\eps)=1$, $\alpha_0(\eps)=\eps$).
	\begin{equation}\label{chartab}
	\arraycolsep=4pt\def\arraystretch{1.3}
	\begin{array}{llcccc}
	&&\eps I_2 & \operatorname{d}(a) & \operatorname{d}'(\xi) & \eps u_\tau \\
	&& \eps \in\{\pm 1\} & a \in \mu_{q-1} & \xi \in \mu_{q+1} & \eps, \tau \in \{ \pm 1 \}\\
	&|C_{\SL_2(q)}(g)||& q(q^2-1) & q-1 & q+1 & 2q \\ 
	&o(g) & o(\eps) & o(a) & o(\xi) & q\cdot o(\eps)\\
	\cmidrule[1.2pt]{1-6} 
	1&\chi_1 & 1 & 1 & 1 & 1\\ 
	R'_+(\theta_0)&\chi_2&\frac{1}{2}(q-1)&0&-\theta_{{0}} \left( \xi \right) &\frac{1}{2}(-1+\tau \sqrt {-q}) \\ R'_-(\theta_0)&\chi_3&\frac{1}{2}(q-1)&0&-\theta_{{0}} \left( \xi \right) &\frac{1}{2}(-1-\tau\sqrt {-q})\\ 
	\operatorname{St}&\chi_4&q&1&-1&0 \\ 
	R_+(\alpha_0)&\chi_5&\frac{1}{2} \left( q+1 \right) \eps&\alpha_{{0}}
	\left( a \right) &0&\frac{1}{2}\eps\left( 1+\tau\sqrt {-q}
	\right) \\ 
	R_-(\alpha_0)&\chi_6&\frac{1}{2}\left( q+1 \right) \eps&\alpha
	_{{0}} \left( a \right) &0&\frac{1}{2}\eps\left( 1-\tau\sqrt {-q}
	\right) \\\hline
	R'(\theta) & \chi_{6+i} &  (q-1)\cdot\theta(\eps) & 0 & -\theta(\xi)-\theta(\xi)^{-1} & -\theta(\eps) 
	\end {array} 
	\end{equation}
	The top row and the leftmost column contains the names for the conjugacy classes respectively characters used 
	in \cite{Bonnafe}. The symbol $\mu_j$ denotes the group of $j$-th roots of unity in the algebraic closure 
	of $\F_q$ (i. e., a cyclic group of order $j$ if $\gcd(q,j)=1$). The symbol $\theta_0$ denotes the unique ordinary character of order two of the group $\mu_{q+1}$, and the symbol $\alpha_0$ denotes the unique ordinary character of order two of 
	the group $\mu_{q-1}$. The parameter $\theta$ in the last row ranges over all characters of order $2^i$ of the group 
	$\mu_{q+1}$ for $i \geq 2$, although different $\theta$ may still yield the same character $R'(\theta)$. Namely, we have $R'(\theta_1) = R'(\theta_2)$ for $\theta_1\neq \theta_2$ if and only if $\theta_1$ is the complex conjugate of $\theta_2$. 
	
	We should recall that, due to our choice of $q$, the $2$-valuation of $q-1$ is one, and the $2$-valuation of
	$q+1$ is $n-1$. In particular $\theta$ ranges over $(2^{n-1} - 2)/2 = 2^{n-2}-1$ different values. The following 
	will be useful later on:
	\begin{enumerate}
		\item Since $q-1$ has $2$-valuation one, the $2$-valuation of the order of an element $a\in \mu_{q-1}$ is either $0$ or $1$. We have
		\begin{equation}
		\alpha_0(a) = \left\{ \begin{array}{ll} -1 & \textrm{if $a$ has even order}\\ 1 & \textrm{if $a$ has odd order}\end{array} \right.
		\end{equation}
		\item Since $q+1$ has $2$-valuation $n-1$, the $2$-valuation of the order of an element $\xi\in \mu_{q+1}$
		is at most $n-1$. We have
		\begin{equation}\label{eqn_sjhjhdj}
		\theta_0(\xi) = \left\{ \begin{array}{ll} -1 & \textrm{if the $2$-valuation of $o(\xi)$ is $n-1$}\\ 1 & \textrm{otherwise}\end{array} \right.
		\end{equation}
	\end{enumerate}
\end{remark}

\begin{prop}\label{prop_self_perfect_iso}
	\begin{enumerate}
	\item The involution $g\mapsto g^{-1}$ on $\OO\SL_2(q)$  induces a self-isometry $(2,3)(4,5)$ of the principal block. This self-isometry is also induced by an automorphism (see Proposition  \ref{prop_gl2_action} below).
	\item Alvis-Curtis duality (see \cite[Chapter 8.4]{Bonnafe}) swaps the trivial character and the Steinberg character, which correspond to the 
	first and fourth row of the decomposition matrix \eqref{decomp}. By inspecting the rows of \eqref{eqn_all_elements}
	we see that the signed permutation it induces on irreducible characters is either in $(1, 4)(-2)(-3)(5, 6)(-\id_{\{7,...,6+l\}})\cdot \operatorname{Sym}(\{7,\ldots,6+l\})$
	or in $(1, 4)(-2, -3)(-\id_{\{7,...,6+l\}})\cdot \operatorname{Sym}(\{7,\ldots,6+l\})$.
	\end{enumerate}
\end{prop}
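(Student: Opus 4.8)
The plan is to establish the two claims of Proposition \ref{prop_self_perfect_iso} essentially by direct computation with the character table \eqref{chartab}, combined with the structural constraints from Proposition \ref{prop_self_iso}.

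For part (1): the inversion map $g \mapsto g^{-1}$ sends each ordinary character $\chi$ to its complex conjugate $\bar\chi$, and this is an isometry of $K_0$ because complex conjugation preserves the inner product on characters; it is perfect because it is induced by an (anti-)automorphism of $\OO\SL_2(q)$ preserving the principal block, hence commutes with the decomposition map up to the identity on Brauer characters (which are also defined over a real field here since $k$ is algebraically closed). So I would first record that the induced self-isometry is the permutation of $\Irr_K$ given by $\chi \mapsto \bar\chi$. Then I compute this permutation explicitly from \eqref{chartab}: $\chi_1$, $\chi_4$ are rational hence fixed; $\chi_5, \chi_6$ take values in $\alpha_0(a) \in \{\pm 1\}$ and $\tfrac12\eps(1\pm\tau\sqrt{-q})$, which are swapped by conjugation only through the sign of $\sqrt{-q}$ — but since $\tau$ ranges over $\{\pm1\}$ the \emph{characters} $\chi_5,\chi_6$ are each stable (conjugation permutes the conjugacy classes $\eps u_\tau$ among themselves), so $\chi_5,\chi_6$ are fixed; similarly each $\chi_{6+i}$ is real-valued and fixed; whereas $\chi_2,\chi_3$ differ precisely by $\tau\sqrt{-q} \mapsto -\tau\sqrt{-q}$ at a \emph{fixed} conjugacy class label and hence are genuinely interchanged. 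This gives the permutation $(2,3)$ on the level of $\Irr_K$; the factor $(4,5)$ claimed in the statement must then arise from matching the sign/row conventions of \eqref{decomp} to those of \eqref{chartab} (the labels $\chi_i$ in the character table are not literally the rows $\chi_i$ of \eqref{decomp}), so I would carefully align the two indexings — this bookkeeping is the one genuinely fiddly point of part (1). The final sentence, that this isometry is realized by an algebra automorphism, is deferred to Proposition \ref{prop_gl2_action}, so here I only need to cite it forward.

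For part (2): Alvis–Curtis duality $D_G$ is a perfect self-isometry of every block (it commutes with decomposition maps, as it is defined uniformly on $\OO\SL_2(q)$-modules via the Steinberg complex / Deligne–Lusztig truncation), and on characters it acts, up to sign, by the well-known formula that sends the trivial character $\mathbf 1$ to $\pm\operatorname{St}$. Thus the induced self-isometry lies in the group computed in Proposition \ref{prop_self_iso}, and it swaps (up to sign) the rows of \eqref{decomp} corresponding to $\mathbf 1$ and $\operatorname{St}$, namely rows $1$ and $4$. Scanning the table \eqref{eqn_all_elements}, the only generators of the isometry group whose underlying permutation moves $1 \leftrightarrow 4$ are the third and fourth rows, i.e. the permutation is, up to $\pm\id$ and up to $\operatorname{Sym}(\{7,\ldots,6+l\})$, either $(1,4)(-2)(-3)(5,6)(-\id_{\{7,\ldots,6+l\}})$ or $(1,4)(-2,-3)(-\id_{\{7,\ldots,6+l\}})$. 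That is exactly the stated dichotomy, so part (2) follows formally from Proposition \ref{prop_self_iso} once we know $D_G$ is a perfect isometry exchanging those two characters.

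The main obstacle, modest as it is, is the index-matching in part (1): the character-table labels in Remark \ref{remark_chartab} and the decomposition-matrix row labels in \eqref{decomp} are introduced independently, and one must verify that under the correct identification the inversion permutation becomes $(2,3)(4,5)$ rather than, say, $(2,3)$ or $(5,6)$; this is a matter of comparing which rows of \eqref{decomp} have which Cartan inner products and matching them against the norms computed from \eqref{chartab}. Everything else is either a citation (Alvis–Curtis duality is a perfect isometry; the forward reference to Proposition \ref{prop_gl2_action}) or an immediate appeal to the classification already carried out in Proposition \ref{prop_self_iso}.
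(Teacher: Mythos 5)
Your treatment of part (2) is fine: the paper gives no explicit proof, and your argument (Alvis--Curtis duality is a perfect self-isometry swapping rows $1$ and $4$ of \eqref{decomp}, hence its underlying permutation must match one of the two lines of \eqref{eqn_all_elements} that transpose $1$ and $4$) is exactly the implicit justification. Your high-level setup for part (1) is also right: $g\mapsto g^{-1}$ is an anti-automorphism of $\OO\SL_2(q)$ preserving the principal block, it acts on $\Irr_K$ by $\chi\mapsto\bar\chi$, and it is perfect by compatibility with the decomposition map.

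However, the explicit computation in part (1) contains a genuine error. You claim $\chi_5$ and $\chi_6$ are each fixed by complex conjugation, on the grounds that ``conjugation permutes the conjugacy classes $\eps u_\tau$ among themselves''. That fact is true, but it tells you only that $\chi_5(g^{-1}) = \overline{\chi_5(g)}$, which holds for every character; it does not tell you that $\bar\chi_5 = \chi_5$. To decide whether $\bar\chi_5$ equals $\chi_5$ or $\chi_6$ you must compare $\overline{\chi_5(g)}$ with $\chi_5(g)$ and $\chi_6(g)$ at the \emph{same} conjugacy class $g$, not at $g^{-1}$. Reading off \eqref{chartab} at the class $\eps u_\tau$,
\begin{equation*}
\overline{\chi_5(\eps u_\tau)} \;=\; \overline{\tfrac12\eps\bigl(1+\tau\sqrt{-q}\bigr)} \;=\; \tfrac12\eps\bigl(1-\tau\sqrt{-q}\bigr) \;=\; \chi_6(\eps u_\tau) \;\neq\; \chi_5(\eps u_\tau),
\end{equation*}
since $\sqrt{-q}$ is purely imaginary. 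All other character values of $\chi_5,\chi_6$ are real and agree, so $\bar\chi_5 = \chi_6$ and $\bar\chi_6=\chi_5$; they \emph{are} swapped. Exactly the same reasoning gives $\bar\chi_2 = \chi_3$, while $\chi_1, \chi_4, \chi_{6+i}$ are real-valued and fixed. The induced permutation is therefore $(2,3)(5,6)$, not $(2,3)$.

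Your attempted rescue --- attributing a ``$(4,5)$'' factor to ``matching the sign/row conventions'' --- does not work, because the indexing of $\Irr_K$ is pinned down consistently by \eqref{decomp} and \eqref{chartab}, and no relabelling can turn $(2,3)$ into $(2,3)(4,5)$ while respecting it. In fact the paper's ``$(2,3)(4,5)$'' is almost certainly a typo for ``$(2,3)(5,6)$'': one checks directly that $(2,3)(4,5)$ does not stabilize the $\Z$-lattice spanned by the $[K\otimes P_i]$ (apply it to $[K\otimes P_2]=[\chi_2]+[\chi_4]+[\chi_5]+\sum_{i\geq 7}[\chi_i]$ and compare with the columns of \eqref{decomp}), whereas $(2,3)(5,6)$ does ($[P_1]$ is fixed and $[P_2]\leftrightarrow[P_3]$), and $(2,3)(5,6)$ appears as the second explicit generator in \eqref{eqn_generators_selfiso}. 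This is further corroborated by the forward reference to Proposition \ref{prop_gl2_action}, which produces an automorphism swapping $R_+(\alpha_0)\leftrightarrow R_-(\alpha_0)$ and $R'_+(\theta_0)\leftrightarrow R'_-(\theta_0)$, i.e.\ precisely $(2,3)(5,6)$. So the correct target is $(2,3)(5,6)$, and your computation falls short of it by the $(5,6)$ factor.
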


Note that the existence of the above perfect self-isometries implies that $Z(B_0(\OO\SL_2(q)))$ has automorphisms which induce the corresponding (unsigned) permutations on the Wedderburn components (or, equivalently, the primitive idempotents of $Z(K\otimes B_0(\OO\SL_2(q)))$).

It is now quite natural to ask whether the self-isometry inducing the permutation $(2,4)(-1,-3)(-6)$ is also perfect, since then all three generating (signed) permutations in equation \eqref{eqn_generators_selfiso} would come from perfect isometries. It turns out that this is false. But, as we will see in a moment, we can find an element $\sigma\in \operatorname{Sym}(\{7,\ldots,6+l\})$ such that $(2,4)(-1,-3)(-6) \circ \sigma$ is perfect. Nevertheless, we will start by looking at the permutation $(2,4)(-1,-3)(-6)$, and the corresponding character of
$G\times G$:
\begin{equation}
	\begin{array}{rcl}
	\iota_0(g,h) &:=& -\chi_1(g) \chi_3(h)-\chi_3(g)\chi_1(h)+\chi_2(g)\chi_4(h) + \chi_4(g)\chi_2(h) + \chi_5(g)\chi_5(h)
	\\ &&- \chi_6(g)\chi_6(h)
	+ \sum_{i=7}^{6+l} \chi_i(g) \chi_i(h)
	\end{array}
\end{equation}
Since perfect characters form an additive group, and we know that the character of $G\times G$ representing the
identity permutation is perfect, we may just as well study the character
\begin{equation}\label{eqn_def_mu}
		\begin{array}{rcl}
		\mu(g,h) &=& \left(\sum_{i=1}^{6+l} \chi_i(g)\chi_i(h)\right) - \iota_0(g,h) \\ 
		&=&\chi_1(g)\chi_1(h) + \chi_3(g)\chi_3(h) +\chi_1(g)\chi_3(h) + \chi_3(g)\chi_1(h) +
		\chi_2(g)\chi_2(h) \\ &&+ \chi_4(g)\chi_4(h) - \chi_2(g)\chi_4(h)-\chi_4(g)\chi_2(h) + 2\chi_6(g)\chi_6(h)
		\end{array}
\end{equation}
Now we will look at how exactly $\mu(g,h)$ fails to be perfect, and how we can rectify that. Unfortunately, checking that a character is perfect is usually  a somewhat tedious computation, and what follows is no exception. 

\begin{lemma}\label{lemma_mu_almost_perf}
	The character $\mu(g,h)$ defined in \eqref{eqn_def_mu} has the following properties:
	\begin{enumerate}
		\item If $g$ has even order, and $h$ has odd order, then 
			$\mu(g,h)=\mu(h,g)=0$.
		\item If $g$ and $h$ both have odd order, then
			\begin{equation}
				\frac{\mu(g,h)}{|C_{\SL_2(q)}(g)|} \in \OO \quad\textrm{and}\quad \frac{\mu(g,h)}{|C_{\SL_2(q)}(h)|}\in\OO
			\end{equation}
		\item If $g$ and $h$ both have even order, then
			\begin{equation}
					\frac{\mu(g,h)}{|C_{\SL_2(q)}(g)|} \in \OO \quad\textrm{and}\quad \frac{\mu(g,h)}{|C_{\SL_2(q)}(h)|}\in\OO
			\end{equation}
			except possibly if $g$ and $h$ belong to the conjugacy classes labeled by $\operatorname{d}'(\xi_1)$ and 
			$\operatorname{d}'(\xi_2)$
			in the character table \eqref{chartab}, and $\xi_1$ and $\xi_2$ both have order divisible by
			$2^{n-1}$.
	\end{enumerate}
\end{lemma}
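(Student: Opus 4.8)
The plan is to first simplify $\mu$ and then to evaluate it conjugacy class by conjugacy class, using that the relevant values fall into a rigid pattern. Grouping the summands in \eqref{eqn_def_mu} by the first argument, one reads off at once that
\[
\mu(g,h) = \psi_1(g)\psi_1(h) + \psi_2(g)\psi_2(h) + 2\chi_6(g)\chi_6(h), \qquad \psi_1 := \chi_1+\chi_3, \quad \psi_2 := \chi_4-\chi_2 .
\]
In particular $\mu$ is symmetric, so in (1) it is enough to show $\mu(g,h)=0$ when $g$ has even order and $h$ has odd order, and in (2),(3) it is enough to check $\mu(g,h)/|C_{\SL_2(q)}(g)|\in\OO$ for all ordered pairs $(g,h)$. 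Reading the values of $\psi_1,\psi_2,\chi_6$ off the character table \eqref{chartab} (using Remark~\ref{remark_chartab} for $\alpha_0$ and $\theta_0$) one finds the following dichotomy: on every class of even-order elements the triple $(\psi_1,\psi_2,\chi_6)$ is a scalar multiple of $(1,1,-1)$ --- on $-I_2$, on $\operatorname{d}(a)$ with $a$ of even order, and on $-u_\tau$, with respective scalars $\tfrac{q+1}{2}$, $1$ and $\tfrac12(1-\tau\sqrt{-q})$ --- or else a scalar multiple of $(1,-1,0)$ --- on $\operatorname{d}'(\xi)$ with $\xi$ of even order, with scalar $1-\theta_0(\xi)\in\{0,2\}$; whereas on every class of odd-order elements one checks directly (on the four types $I_2$, $\operatorname{d}(a)$, $\operatorname{d}'(\xi)$, $u_\tau$) that $\psi_1-\psi_2$ and $\psi_1+\psi_2-2\chi_6$ both vanish, i.e. $\psi_1=\psi_2=\chi_6$ there. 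Part (1) is then immediate: for $g$ of even order, $\mu(g,\cdot)$ is a scalar multiple of $\psi_1+\psi_2-2\chi_6$ or of $\psi_1-\psi_2$, and both vanish on odd-order elements.

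For (2), on odd-order elements $\mu(g,h)=4\chi_6(g)\chi_6(h)$; since character values lie in $\OO$ it suffices to compare $2$-adic valuations with that of $|C_{\SL_2(q)}(g)|$. The centralizers of $I_2$, $\operatorname{d}(a)$, $\operatorname{d}'(\xi)$, $u_\tau$ have $2$-valuations $n$, $1$, $n-1$, $1$; and $\chi_6$ has value $\tfrac{q+1}{2}$ (valuation $n-2$) at $I_2$, value $0$ at $\operatorname{d}'(\xi)$, and a value in $\OO$ everywhere else (at $u_\tau$ it is $\tfrac12(1-\tau\sqrt{-q})$, an algebraic integer precisely because $q\equiv 3\bmod 4$). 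Combining the factor $4$ with these valuations settles every case, the tightest being $g=I_2$, where $4\cdot\tfrac{q+1}{2}$ has $2$-valuation exactly $n$.

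Part (3) is the main computation. Writing $v_g,v_h\in\{(1,1,-1),(1,-1,0)\}$ for the direction vectors above, $c_g,c_h$ for the corresponding scalars, and $\langle x,y\rangle := x_1y_1+x_2y_2+2x_3y_3$, we have $\mu(g,h)=c_gc_h\langle v_g,v_h\rangle$. Since $\langle(1,1,-1),(1,-1,0)\rangle=0$, we get $\mu(g,h)=0$ whenever exactly one of $g,h$ is of type $\operatorname{d}'$. If neither is of type $\operatorname{d}'$, then $\mu(g,h)=4c_gc_h$ with $c_g,c_h\in\{\tfrac{q+1}{2},1,\tfrac12(1\pm\sqrt{-q})\}$, and divisibility by $|C_{\SL_2(q)}(g)|$ follows from $v_2(q-1)=1$, $v_2(q+1)=n-1$, $v_2(1\pm\sqrt{-q})\ge 1$ and $v_2(1+q)=n-1\ge 2$ (for the pair of two $-u$-type classes one also uses $(1-\sqrt{-q})(1+\sqrt{-q})=1+q$). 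Finally, if $g=\operatorname{d}'(\xi_1)$ and $h=\operatorname{d}'(\xi_2)$, then $\mu(g,h)=2(1-\theta_0(\xi_1))(1-\theta_0(\xi_2))$, which by Remark~\ref{remark_chartab} is $0$ unless both $\xi_i$ have order divisible by $2^{n-1}$, and equals $8$ otherwise; since $|C_{\SL_2(q)}(\operatorname{d}'(\xi_i))|=q+1$ has $2$-valuation $n-1$, the divisibility may (and for $n\ge 5$ does) fail --- this is exactly the exceptional case in the statement.

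The step I expect to be most delicate is the valuation bookkeeping in (2) and (3): one must track powers of $2$ precisely, in particular derive $v_2(1\pm\sqrt{-q})\ge 1$ from $q\equiv 3\bmod 4$ and confirm that the factor $4$ arising from $\psi_1=\psi_2=\chi_6$ on odd-order elements is exactly enough to absorb the centralizer of order $q(q^2-1)$, which has $2$-valuation $n$. Conceptually everything else is forced by the rank-two shape of $(\psi_1,\psi_2,\chi_6)$ on even-order elements.
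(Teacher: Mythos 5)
Your proof is correct, and it takes a genuinely more conceptual route than the paper's. The paper simply tabulates the $4\times 4$ matrices of values $\mu(g,h)/|C_{\SL_2(q)}(g)|$ for the odd--odd, odd--even and even--even pairings (by hand or computer) and reads off divisibility entry by entry. You instead observe that $\mu$ factors as $\psi_1\otimes\psi_1+\psi_2\otimes\psi_2+2\chi_6\otimes\chi_6$ with $\psi_1=\chi_1+\chi_3$, $\psi_2=\chi_4-\chi_2$, and then exploit the rigid structure of the triple $(\psi_1,\psi_2,\chi_6)$: it collapses to $(\ast,\ast,\ast)$ on odd-order classes and to a scalar multiple of $(1,1,-1)$ or $(1,-1,0)$ on even-order classes, with these two directions orthogonal for the weighted form $x_1y_1+x_2y_2+2x_3y_3$. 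Part (1) becomes a one-line orthogonality statement, and (2), (3) reduce to a small amount of explicit $2$-adic bookkeeping --- I checked your valuation estimates, including $v_2(1\pm\sqrt{-q})\ge 1$ from $q\equiv 3\bmod 4$, the factor $4=\langle(1,1,-1),(1,1,-1)\rangle$ absorbing $|C(I_2)|$, and the exceptional $\operatorname{d}'$--$\operatorname{d}'$ value $8$ against $|C|=q+1$, all consistent with the paper's entries (in particular the $(3,3)$-entry $\frac{2(1-\theta_0(\xi_1))(1-\theta_0(\xi_2))}{q+1}$). Your argument buys a transparent explanation of \emph{why} the matrix of values has the zeros and divisibilities it does, and makes clear that the single obstruction sits at the $(\operatorname{d}',\operatorname{d}')$ pair; the paper's approach buys brevity of exposition at the cost of opacity (it defers the $16\times 9$-term evaluations to ``a computer'').
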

\begin{proof}
	Note that only the characters $\chi_1$ through $\chi_6$ are involved in $\mu$, and  the entries in the
	rows of the character table \eqref{chartab} depend only slightly on the concrete parameters $\eps$, $a$, $\xi$, $\eps$ and $\tau$ that specify the actual 
	conjugacy class (note: there are in fact two different parameters called ``$\eps$''). In fact, we only need to know $\eps$ for the first column, $\alpha_0(a)$ for the second, $\theta_0(\xi)$ for the third and $\eps$ as well as $\tau$ for the fourth. If  we are looking at the character values of an element of odd order, then $\eps=1$, $\alpha_0(a)=1$, $\theta_0(\xi)=1$ and $\eps=1$ respectively (only $\tau$ still depends on the actual conjugacy class).   If we are looking at the character values of an element of even order, then $\eps=-1$, $\alpha_0(a)=-1$, $\theta_0(\xi)$  still depends on the order of $\xi$ and $\eps=-1$  (again, $\tau$ depends on the actual conjugacy class). Here we used the last part of Remark \ref{remark_chartab}.
	Hence we can write the values of $\mu(g,h)$ for $g\in \{\eps_1 I_2, \operatorname{d}(a_1), \operatorname{d}'(\xi_1), \eps_1u_{\tau_1}\}$  of odd order and $h\in \{\eps_2 I_2, \operatorname{d}(a_2), \operatorname{d}'(\xi_2), \eps_2u_{\tau_2}\}$  of even order into a $4\times 4$-matrix, whose entries will
	only depend on $\tau_1$, $\theta_0(\xi_2)$, and $\tau_2$. One can compute this
	$4\times 4$-matrix by hand or using a computer (which seems more sensible seeing how this is a rather lengthy computation), and one obtains the zero matrix, which proves the first part of our assertion.
	
	In the same vein we can put the values of $\frac{\mu(g,h)}{|C_{\SL_2(q)}(g)|}$ for $g\in \{\eps_1 I_2, \operatorname{d}(a_1), \operatorname{d}'(\xi_1), \eps_1u_{\tau_1}\}$   and $h\in \{\eps_2 I_2, \operatorname{d}(a_2), \operatorname{d}'(\xi_2), \eps_2u_{\tau_2}\}$  both of odd order into a $4\times 4$-matrix.
	Note that $|C_{\SL_2(q)}(h)|$ and $|C_{\SL_2(q)}(g)|$ and do not depend on $\eps_1$, $a_1$, etc. at all.
	We obtain the following matrix:
	\begin{equation}
			 \arraycolsep=4pt\def\arraystretch{1.5}
		  \begin {array}{c|cccc} 
		  & \eps_1 I_2 & \operatorname{d}(a_1) & \operatorname{d}'(\xi_1) & \eps_1 u_{\tau_1} \\ \hline 
		  \eps_2 I_2 &{\frac {q+1}{ \left( q-1 \right) q}}&{
		 	\frac {2}{ \left( q-1 \right) q}}&0&-{\frac {-1+\tau_{{2}}\sqrt {-q}}{
		 		\left( q-1 \right) q}}\\
		  \operatorname{d}(a_2) & 2{\frac {q+1}{q-1}}&\frac{4}{
		 q-1}&0&-2{\frac {-1+\tau_{{2}}\sqrt {-q}}{q-1}}
		 \\ 
		 \operatorname{d}'(\xi_2)&0&0&0&0\\ 
		 \eps_2 u_{\tau_2}&-{\frac {(\tau_1\sqrt{-q} - 1)(1+q)}{2q}}&-{\frac {-1+
		 			\tau_{{1}}\sqrt {-q}}{q}}&0&{\frac {(\tau_1\sqrt{-q} - 1)(\tau_2\sqrt{-q} - 1)
		 			}{2q}}\end {array} 	
	\end{equation}
	To see that all of the entries lie in $\OO$ it suffices to know the following: $2$ divides $q-1$ and $q+1$,
	$q-1$ divides both $2$ and $q+1$, and both $2$ and $q-1$ divide $1\pm \sqrt{-q}$. The latter is owed to the fact that $\frac{1\pm \sqrt{-q}}{2}$ is integral due to our choice of $q$. Since $\mu(g,h)=\mu(h,g)$ for all $g$ and $h$ the proof of the second part of our assertion is complete.
	
	To finish the proof let us look at the values of $\frac{\mu(g,h)}{|C_{\SL_2(q)}(g)|}$ for $g\in \{\eps_1 I_2, \operatorname{d}(a_1), \operatorname{d}'(\xi_1), \eps_1u_{\tau_1}\}$   and $h\in \{\eps_2 I_2, \operatorname{d}(a_2), \operatorname{d}'(\xi_2), \eps_2u_{\tau_2}\}$  both of even order. We get almost the same matrix as above:
	\begin{equation}\label{eqn_sjhjhdkjdhk}
	\arraycolsep=4pt\def\arraystretch{1.5}
	\begin {array}{c|cccc} 
	& \eps_1 I_2 & \operatorname{d}(a_1) & \operatorname{d}'(\xi_1) & \eps_1 u_{\tau_1} \\ \hline 
	\eps_2 I_2 &{\frac {q+1}{ \left( q-1 \right) q}}&{
		\frac {2}{ \left( q-1 \right) q}}&0&-{\frac {-1+\tau_{{2}}\sqrt {-q}}{
			\left( q-1 \right) q}}\\
	\operatorname{d}(a_2) & 2{\frac {q+1}{q-1}}&\frac{4}{
		q-1}&0&-2{\frac {-1+\tau_{{2}}\sqrt {-q}}{q-1}}
	\\ 
	\operatorname{d}'(\xi_2)&0&0&
	\frac{2 (\theta_0(\xi_1) \theta_0(\xi_2)-\theta_0(\xi_1)-\theta_0(\xi_2)+1)}{q+1}
	&0\\ 
	\eps_2 u_{\tau_2}&-{\frac {(\tau_1\sqrt{-q} - 1)(1+q)}{2q}}&-{\frac {-1+
			\tau_{{1}}\sqrt {-q}}{q}}&0&{\frac {(\tau_1\sqrt{-q} - 1)(\tau_2\sqrt{-q} - 1)
		}{2q}}\end {array} 	
	\end{equation}	
	The only entry that is different is the $(3,3)$-entry, which will in general not lie in $\OO$. However, this entry is zero unless both
	$\theta_0(\xi_1)$ and $\theta_0(\xi_2)$ are equal to $-1$, which, according to \eqref{eqn_sjhjhdj}, happens only
	if $\xi_1$ and $\xi_2$ both have order divisible by $2^{n-1}$. That finishes the proof.
\end{proof}

\begin{lemma}\label{lemma_new_selfiso}
	Let
	\begin{equation}
		\iota_1(g,h) := \sum_{\theta} (R'(\theta)(g) R'(\theta)(h) - R'(\theta)(g) R'(\theta\cdot \theta_0)(h))
	\end{equation}
	using the notation for the irreducible characters from \eqref{chartab}. The summation index $\theta$
	ranges over the same $2^{n-2}-1$ characters of $\mu_{q+1}$ as in the character table.
	
	Then $\iota_0 - \iota_1$ is a perfect isometry, and clearly the induced signed permutation on irreducible 
	characters is $(2,4)(-1,-3)(-6)\circ\sigma$ for an element $\sigma\in\operatorname{Sym}(\{7,\ldots, 6+l\})$ of
	order two.
\end{lemma}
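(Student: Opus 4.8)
The plan is to reduce the statement to the two defining conditions of a perfect character together with the ``almost perfectness'' of $\mu$ from Lemma \ref{lemma_mu_almost_perf}. First I would rewrite $\iota_1$ structurally. Since $\theta_0$ is real of order two, the assignment $\theta\mapsto\theta\theta_0$ sends a character of $\mu_{q+1}$ of order $2^i$ with $i\geq 2$ to one of the same order (as $\theta_0\in\langle\theta\rangle$), and it is compatible with the identification $R'(\theta)=R'(\bar\theta)$; hence it induces a permutation $\sigma$ of the set $\{R'(\theta)\}=\{\chi_7,\ldots,\chi_{6+l}\}$. One has $\sigma^2=\id$ because $\theta_0^2=1$, and $R'(\theta\theta_0)=R'(\theta)$ exactly when $\theta^2=\theta_0$, i.e. when $\theta$ has order $4$; so $\sigma$ is a product of transpositions (and is trivial precisely when $n=3$). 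Choosing one representative $\theta$ from each pair $\{\theta,\bar\theta\}$ one gets $\iota_1(g,h)=\sum_{i=7}^{6+l}\chi_i(g)\chi_i(h)-\sum_{i=7}^{6+l}\chi_i(g)\chi_{\sigma(i)}(h)$, so $\iota_0-\iota_1$ is the character attached to the signed permutation $(2,4)(-1,-3)(-6)\circ\sigma$. In particular $\iota_0-\iota_1$ is automatically an isometry, and only perfectness remains to be checked.

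Since perfect characters form a group and the character $\sum_{i=1}^{6+l}\chi_i(g)\chi_i(h)$ representing the identity is perfect, $\iota_0-\iota_1$ is perfect if and only if $\mu+\iota_1$ is, with $\mu$ as in \eqref{eqn_def_mu}. By Lemma \ref{lemma_mu_almost_perf}, $\mu$ already satisfies all requirements of a perfect character \emph{except} that $\frac{\mu(g,h)}{|C_{\SL_2(q)}(g)|}$ need not lie in $\OO$ when $g$ and $h$ lie in classes $\operatorname{d}'(\xi_1)$, $\operatorname{d}'(\xi_2)$ with $\xi_1$, $\xi_2$ of order divisible by $2^{n-1}$. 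So I would show that $\iota_1$ is supported exactly on those pairs and there exactly repairs the defect of $\mu$. For the support statement, inspecting the row of \eqref{chartab} belonging to $R'(\theta)$ shows that the ``correction term'' $R'(\theta)(h)-R'(\theta\theta_0)(h)$ equals $(\theta_0(\xi)-1)(\theta(\xi)+\theta(\xi)^{-1})$ when $h$ lies in $\operatorname{d}'(\xi)$ and is zero for $h$ in any other class (here one uses $\theta_0(\pm 1)=1$, which holds because $4\mid q+1$); by \eqref{eqn_sjhjhdj} it is nonzero only when $\theta_0(\xi)=-1$, i.e. when $\xi$ has order divisible by $2^{n-1}$. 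Thus $\iota_1(g,h)=\sum_\theta R'(\theta)(g)\cdot(\text{correction at }h)$ vanishes unless $h$ is of this special type, and since $\iota_1$ is symmetric in $g$ and $h$ (reindex the summation by $\theta\mapsto\theta\theta_0$, which only permutes the summation set), it vanishes unless $g$ is of this type too. Together with Lemma \ref{lemma_mu_almost_perf} this already gives that $\mu+\iota_1$ vanishes on ``mixed'' pairs and that $\frac{(\mu+\iota_1)(g,h)}{|C_{\SL_2(q)}(g)|}\in\OO$ (and likewise with $h$) on every pair except possibly the $\operatorname{d}'$-pairs above.

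It then remains to evaluate $\iota_1$ on a pair $(\operatorname{d}'(\xi_1),\operatorname{d}'(\xi_2))$ with $\theta_0(\xi_1)=\theta_0(\xi_2)=-1$. There $\iota_1(\operatorname{d}'(\xi_1),\operatorname{d}'(\xi_2))=2\sum_\theta(\theta(\xi_1)+\theta(\xi_1)^{-1})(\theta(\xi_2)+\theta(\xi_2)^{-1})$; expanding the product and pairing each term with its complex conjugate turns the sum over representatives into a sum $\sum_\psi\psi(\zeta)$ of \emph{all} $2$-power-order characters $\psi$ of $\mu_{q+1}$ of order $\geq 4$, evaluated at $\zeta=\xi_1\xi_2$ and at $\zeta=\xi_1\xi_2^{-1}$. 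By orthogonality, $\sum_\psi\psi(\zeta)$ over all $2$-power-order $\psi$ equals $2^{n-1}$ if $\zeta$ has odd order and $0$ otherwise; subtracting the two contributions of order $\leq 2$ (and using $\theta_0(\zeta)=\theta_0(\xi_1)\theta_0(\xi_2)^{\pm 1}=1$) one gets $2^{n-1}-2$ respectively $-2$. Since $\xi_1\xi_2$ and $\xi_1\xi_2^{-1}$ cannot both have odd order (their quotient is $\xi_2^2$, which has even order as $n\geq 3$), one arrives at $\iota_1(\operatorname{d}'(\xi_1),\operatorname{d}'(\xi_2))\in\{-8,\ 2^n-8\}$, while \eqref{eqn_sjhjhdkjdhk} gives $\mu(\operatorname{d}'(\xi_1),\operatorname{d}'(\xi_2))=8$. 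Hence $(\mu+\iota_1)(\operatorname{d}'(\xi_1),\operatorname{d}'(\xi_2))\in\{0,\ 2^n\}$, and dividing by $|C_{\SL_2(q)}(\operatorname{d}'(\xi_i))|=q+1$ lands in $\OO$ because the $2$-valuation of $q+1$ is $n-1<n$. This establishes the last remaining condition, so $\mu+\iota_1$, and therefore $\iota_0-\iota_1$, is a perfect isometry.

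The structural part — rewriting $\iota_1$, the group-of-perfect-characters argument, the symmetry of $\iota_1$ — is routine. The real work, and the place most likely to hide a slip, is the last paragraph: one must pin down the correction term precisely enough to see that it is supported on exactly the classes where $\mu$ misbehaves, and then evaluate the character sum carefully enough that the stray ``$8$'' coming from $\mu$ cancels on the nose. As in Lemma \ref{lemma_mu_almost_perf}, it is probably wise to carry out the relevant evaluations of the $R'(\theta)$-sums with machine assistance to avoid sign or factor-of-two errors.
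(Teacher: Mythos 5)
Your proposal is correct and follows essentially the same route as the paper: the same reduction to checking $\mu+\iota_1$ via the group structure of perfect characters, the same observation that $\iota_1$ is symmetric by reindexing $\theta\mapsto\theta\theta_0$, the same support analysis showing that $\iota_1$ lives exactly on the pairs where $\mu$ misbehaves, and the same final character-sum evaluation yielding $\mu+\iota_1\in\{0,2^n\}$. The only (cosmetic) difference is in the last evaluation: the paper parametrizes by a single integer $z$ with $\theta(h)=\theta(g^z)$ and sums over Galois orbits of roots of unity, arriving at $\iota_1=N(z)-8$, whereas you expand directly in terms of $\xi_1\xi_2$ and $\xi_1\xi_2^{-1}$ and use column orthogonality for the $2$-power characters plus the observation that $\xi_1\xi_2$ and $\xi_1\xi_2^{-1}$ cannot both have odd order; the two bookkeepings agree term for term. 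One small remark you rightly flag: for $n=3$ the permutation $\sigma$ is the identity (so the lemma's claim that $\sigma$ has ``order two'' should be read as $\sigma^2=\mathrm{id}$); this does not affect the substance.
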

\begin{proof}
	It is clear by definition that $\iota_0-\iota_1$ is an isometry, we only need to check that it is perfect.
	To do that we may just as well prove that $\mu + \iota_1$ is perfect. 
	
	Let us first note that 
	$\iota_1(g,h) \neq 0$ if and only if both $g$ and $h$ have order divisible by $2^{n-1}$. It is clear that 
	if $h$ has order not divisible by $2^{n-1}$, then $\iota_1(g, h)=0$ because $R(\theta)(h) = R(\theta\cdot \theta_0)(h)$ which means that each individual summand in the definition of $\iota_1$ is zero. The fact that
	$\iota_1(g,h)$ is zero whenever $g$ has order not divisible by $2^{n-1}$ can be seen by rearranging the sum.
	To be precise, we apply an index shift to obtain the following equality:
	\begin{equation}
		\begin{array}{rcl}
		\iota_1(g,h) &=&  \sum_{\theta} R'(\theta)(g) R'(\theta)(h) - \sum_{\theta}R'(\theta)(g) R'(\theta\cdot \theta_0)(h) \\ \\
			&=&  \sum_{\theta} (R'(\theta)(g) R'(\theta)(h) - \sum_{\theta} R'(\theta \cdot \theta_0)(g) R'(\theta)(h)) \\ \\
			&=& \iota_1(h,g) 
		\end{array}
	\end{equation}
	We conclude that $\mu + \iota_1$ satisfies all of the conditions 1 through 3 from the statement of Lemma 
	\ref{lemma_mu_almost_perf} just as $\mu$ does (because the relevant values of $\mu + \iota_1$ are actually equal to 
	those of $\mu$). 
	
	It remains to prove that if $g$ and $h$ both have order divisible by $2^{n-1}$ (which we will assume from here on out), then
	\begin{equation}
	\frac{\mu(g,h) +\iota_1(g,h)}{|C_{\SL_2(q)}(g)|}=\frac{\mu(g,h) +\iota_1(g,h)}{q+1} \stackrel{!}{\in} \OO
	\end{equation}
	 To show this we can simply evaluate $\mu$ and $\iota_1$: $\mu(g,h)=8$ (from \eqref{eqn_sjhjhdkjdhk}), 
	and since $\theta_0(h)=-1$ the formula for $\iota_1(g,h)$ simplifies to the following:
	\begin{equation}\label{eqn_sjhjddd}
		\iota_1(g,h) = 2\cdot \sum_{\theta} (\theta(g)+\theta(g)^{-1})(\theta(h)+\theta(h)^{-1})
	\end{equation}
	The latter is technically abuse of notation, since $\theta$ is not really defined on $g$ (or $h$, for that matter), but rather on $\xi\in\mu_{q+1}$, where $\operatorname{d}'(\xi)$ is the ``standard'' representative (from \eqref{chartab}) of the conjugacy class that $g$ (or $h$) is an element of.
	
	Recall that if we take all characters of $\mu_{q+1}$ of two-power order except those of order $\leq 2$, and partition these characters into sets of cardinality two containing a character and its complex conjugate, then 
	the summation index $\theta$ in \eqref{eqn_sjhjddd} ranges over representatives of those sets. Since the values of $R'(\theta)$ and $R'(\bar{\theta})$ are actually equal, we might as well let $\theta$ range over all characters of two-power order $> 2$, and then divide the resulting sum by two. Using the fact that the characters of $\mu_{q+1}$ of two-power order are obtained by 
	mapping $g$ respectively $h$ to all possible $2^{n-1}$-st roots of unity, we get the following:
	\begin{equation}\label{eqn_hghegke}
		\begin{array}{rcl}
		\iota_1(g,h) &=& \sum_{i=2}^{n-1} \sum_{\sigma \in \Gal(\Q(\zeta_{2^i})/\Q)} (\zeta_{2^i}^\sigma + 
		(\zeta_{2^i}^\sigma)^{-1}) ((\zeta_{2^i}^z)^\sigma + ((\zeta_{2^i}^z)^\sigma)^{-1}) \\ \\
		&=& \sum_{i=2}^{n-1} \sum_{\sigma \in \Gal(\Q(\zeta_{2^i})/\Q)} 
		(\zeta_{2^i}^{1+z} + \zeta_{2^i}^{1-z} + \zeta_{2^i}^{-1+z} + \zeta_{2^i}^{-1-z})^\sigma \\ \\
		&=& \ldots \textrm{ (continued below)}
		\end{array}
	\end{equation}
	Here $\zeta_{2^i}$ denotes a primitive $2^i$-th root of unity, and $z\in \Z$ is chosen such that
	$\theta(h)=\theta(g^z)$ for all $\theta$ of two-power order (i.e. the two-part of $h$ is conjugate to the 
	$2$-part of $g^z$; in particular, $z$ is odd). Note that for any $j \in \Z$ we have
	\begin{equation}\label{eqn_kldkldjlk}
		\sum_{i=0}^{n-1} \sum_{\sigma \in \Gal(\Q(\zeta_{2^i})/\Q)} (\zeta_{2^i}^j)^\sigma =\left\{\begin{array}{ll} 
			0 & \textrm{if $j \not\equiv 0 \mod 2^{n-1}$} \\
			2^{n-1} & \textrm{if $j\equiv 0 \mod 2^{n-1}$}
		\end{array}\right.
	\end{equation}
	This follows for instance from column orthogonality in the character table of the cyclic group of order $2^{n-1}$. 
	This allows us to simplify \eqref{eqn_hghegke} as follows (note: we need to pay attention to the different ranges for the summation index $i$ in \eqref{eqn_hghegke} and \eqref{eqn_kldkldjlk}):
	\begin{equation}
		\begin{array}{rcl}
			\ldots &=& N(z) - \sum_{i=0}^1 \sum_{\sigma \in \Gal(\Q(\zeta_{2^i})/\Q)} 
			(\zeta_{2^i}^{1+z} + \zeta_{2^i}^{1-z} + \zeta_{2^i}^{-1+z} + \zeta_{2^i}^{-1-z})^\sigma \\ \\
			&\stackrel{z\textrm{ odd}}{=}& N(z) - 8
		\end{array}
	\end{equation}
	Here $N(z)$ denotes $2^{n-1}\cdot (\delta_{1+z,0} + \delta_{1-z,0} + \delta_{-1+z,0} + \delta_{-1-z,0})$ (where $\delta_{a,b}=1$ if $a\equiv b \mod 2^{n-1}$ and $\delta_{a,b}=0$ otherwise). We can now conclude that
	\begin{equation}
		\mu(g,h) + \iota_1(g,h) = N(z) 
	\end{equation}
	which is divisible by $2^{n-1}$, and therefore also by $q+1$ (in $\OO$).
\end{proof}

\begin{corollary}\label{corollary_iso_factorization}
	Let 
	\begin{equation}
	\varphi:\ K_0(K\otimes B_0(\OO\SL_2(q)))\longrightarrow K_0(K\otimes B_0(\OO\SL_2(q)))
	\end{equation}
	be an isometry that 
	maps the $\Z$-lattice $\langle [K\otimes P] \mid \textrm{$P$ projective $B_0(\OO \SL_2(q))$-module} \rangle_{\Z}$ onto itself. Then $\varphi$ can be written as $\varphi_1 \circ \varphi_2$, where 
	$\varphi_1\in \operatorname{Sym}(\{7,\ldots, 6+l\})$ (we identify isometries and signed permutations as in Proposition \ref{prop_self_iso}) and $\varphi_2$ is a perfect isometry. 
\end{corollary}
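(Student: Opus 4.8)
Write $\mathcal{P}$ for the group of self-isometries of $K_0(K\otimes B_0(\OO\SL_2(q)))$ stabilising the lattice $\langle [K\otimes P]\mid P\text{ projective}\rangle_{\Z}$, and let $\mathcal{Q}\subseteq\mathcal{P}$ be the group of perfect self-isometries of $B_0(\OO\SL_2(q))$ (perfect isometries carry projective characters to projective characters, so $\mathcal{Q}\subseteq\mathcal{P}$; in any case only the handful of explicit perfect self-isometries used below is needed, and each of those visibly lies in $\mathcal{P}$). The corollary asserts exactly that $\mathcal{P}=\operatorname{Sym}(\{7,\ldots,6+l\})\cdot\mathcal{Q}$ as a product of subsets, with the $\operatorname{Sym}$-factor on the left. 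By Proposition~\ref{prop_self_iso}, $\mathcal{P}$ is generated by $\operatorname{Sym}(\{7,\ldots,6+l\})$ together with the four signed permutations $-\id$, $(2,3)(5,6)$, $(-1,-4)(2,3)(-5)(-6)$ and $(2,4)(-1,-3)(-6)$. Hence it suffices to prove (i) that $\operatorname{Sym}(\{7,\ldots,6+l\})\cdot\mathcal{Q}$ is a subgroup of $\mathcal{P}$, and (ii) that each of those four generators lies in it; the subgroup then contains a generating set of $\mathcal{P}$, so it equals $\mathcal{P}$, and every $\varphi\in\mathcal{P}$ factors as $\varphi_1\circ\varphi_2$ with $\varphi_1\in\operatorname{Sym}(\{7,\ldots,6+l\})$ and $\varphi_2\in\mathcal{Q}$ perfect.

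For (i) I would observe that, of the four generators above, $-\id$ is central in the group of all signed permutations, while the other three are signed permutations supported on $\{1,\ldots,6\}$ and hence commute elementwise with $\operatorname{Sym}(\{7,\ldots,6+l\})$. Thus $\operatorname{Sym}(\{7,\ldots,6+l\})$ is normalised by a generating set of $\mathcal{P}$, so $\operatorname{Sym}(\{7,\ldots,6+l\})\trianglelefteq\mathcal{P}$; in particular $\mathcal{Q}$ normalises it, whence $\operatorname{Sym}(\{7,\ldots,6+l\})\cdot\mathcal{Q}=\mathcal{Q}\cdot\operatorname{Sym}(\{7,\ldots,6+l\})$ is a subgroup. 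Normality also makes it harmless to verify membership in (ii) only modulo $\operatorname{Sym}(\{7,\ldots,6+l\})$.

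For (ii): the identity is a perfect self-isometry, and since the perfect characters of $G\times G$ form an additive group, $-\id\in\mathcal{Q}$. The signed permutation $(2,3)(5,6)$ is induced by the inversion $g\mapsto g^{-1}$ and is perfect by Proposition~\ref{prop_self_perfect_iso}(1). For $(2,4)(-1,-3)(-6)$: Lemma~\ref{lemma_new_selfiso} provides a perfect self-isometry inducing $(2,4)(-1,-3)(-6)\circ\sigma$ with $\sigma\in\operatorname{Sym}(\{7,\ldots,6+l\})$, and since $(2,4)(-1,-3)(-6)$ is supported on $\{1,\ldots,6\}$ it commutes with $\sigma$, so $(2,4)(-1,-3)(-6)=\sigma^{-1}\circ\bigl((2,4)(-1,-3)(-6)\circ\sigma\bigr)\in\operatorname{Sym}(\{7,\ldots,6+l\})\cdot\mathcal{Q}$. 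The remaining generator $(-1,-4)(2,3)(-5)(-6)$ is handled via Alvis--Curtis duality $\varphi_{\mathrm{AC}}\in\mathcal{Q}$: by Proposition~\ref{prop_self_perfect_iso}(2) the signed permutation it induces is, up to an element of $\operatorname{Sym}(\{7,\ldots,6+l\})$, one of two explicit signed permutations, and a short signed-permutation computation shows that each of those equals $(-1,-4)(2,3)(-5)(-6)$ composed with a product of $-\id$ and (in one of the two cases) the inversion permutation $(2,3)(5,6)$. Hence, modulo $\operatorname{Sym}(\{7,\ldots,6+l\})$, $(-1,-4)(2,3)(-5)(-6)$ is a product of $\varphi_{\mathrm{AC}}$ with perfect isometries already dealt with, so $(-1,-4)(2,3)(-5)(-6)\in\operatorname{Sym}(\{7,\ldots,6+l\})\cdot\mathcal{Q}$.

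Combining (i) and (ii) proves the corollary. The only step where genuine care is needed is the Alvis--Curtis one: the signed-permutation arithmetic must be carried out keeping track of signs, and, because Proposition~\ref{prop_self_perfect_iso}(2) pins down the signed permutation of $\varphi_{\mathrm{AC}}$ only up to the two listed options (and up to $\operatorname{Sym}(\{7,\ldots,6+l\})$), both options have to be checked. Everything else is formal, the substance being supplied by Proposition~\ref{prop_self_iso}, Proposition~\ref{prop_self_perfect_iso} and Lemma~\ref{lemma_new_selfiso}.
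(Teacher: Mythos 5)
Your proof takes the same route as the paper's for $n>3$, and fills in details the paper leaves terse (the normality argument showing $\operatorname{Sym}(\{7,\ldots,6+l\})\cdot\mathcal Q$ is a subgroup, and the explicit signed-permutation arithmetic relating the Alvis--Curtis permutation to the generator $(-1,-4)(2,3)(-5)(-6)$). One small discrepancy you gloss over: Proposition~\ref{prop_self_perfect_iso}(1) as printed says inversion induces $(2,3)(4,5)$, whereas your argument uses $(2,3)(5,6)$; the latter is what the character table and Proposition~\ref{prop_gl2_action} actually give, so this is a typo in the paper and you are correct to work with $(2,3)(5,6)$.

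The genuine gap is the case $n=3$. You invoke Proposition~\ref{prop_self_iso} with no restriction on $n$, but its proof uses the inequality $4\neq 2+2^{n-2}$ to distinguish the possible images of $[P_1]$ from those of $[P_2]$ and $[P_3]$; for $n=3$ this fails, $2+2^{n-2}=4$, and the Cartan matrix acquires the full $\operatorname{Sym}(3)$ symmetry. Concretely, for $n=3$ ($l=1$) the signed permutation $(1,2,3)(5,7,6)$ is an isometry in $\mathcal P$ (it sends $[P_1]\mapsto[P_2]\mapsto[P_3]\mapsto[P_1]$), yet it moves the index $7$ while every element of $\langle -\id, (2,3)(5,6), (-1,-4)(2,3)(-5)(-6), (2,4)(-1,-3)(-6)\rangle\cdot\operatorname{Sym}(\{7\})$ sends $7$ to $\pm 7$; so Proposition~\ref{prop_self_iso} is false as stated when $n=3$ and your generating set does not generate $\mathcal P$. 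The paper sidesteps this by declaring at the top of the section that it restricts to $n>3$ and disposing of $n=3$ in the proof of the corollary by citing \cite[Proposition 1.1]{KessarQuat} directly; you need to do the same, or otherwise supply the extra perfect isometry inducing a $3$-cycle on the simple modules (which for $n=3$ comes from the order-three automorphism of $\OO Q_8\rtimes C_3$ in \cite[Lemma 1.2]{KessarQuat}, cf.\ Corollary~\ref{corollary_full_permutation_auto}).
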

\begin{proof}
	For $n=3$ the assertion has been proved in \cite[Proposition 1.1]{KessarQuat}. 
	For $n>3$ the claim follows from Proposition \ref{prop_self_iso}, which yields a factorization of $\varphi$ as
	$\varphi_1\circ \varphi_2$ with $\varphi_1\in\operatorname{Sym}(\{7,\ldots, 6+l\})$. Proposition \ref{prop_self_perfect_iso}
	and Lemma \ref{lemma_new_selfiso} then show that $\varphi_2$ composed with an appropriate element of 
	$\operatorname{Sym}(\{7,\ldots, 6+l\})$ is a perfect isometry.
\end{proof}

\section{Fixing the Wedderburn embedding}\label{section_fix_wedderburn}
Throughout this section we will assume the following:

\begin{enumerate}
\item $\Lambda$ is a symmetric $\OO$-order with split semisimple $K$-span.
\item $k\otimes \Lambda$ is basic.
\item The decomposition matrix of $\Lambda$ is the one given in \eqref{decomp}, up to permutation of rows and columns.
\item We fix a prime $q\equiv 3 \mod 4$ and an isomorphism (which we assume to exist)
\begin{equation}
	\Phi:\ Z(\Gamma) \stackrel{\sim}{\longrightarrow} Z(\Lambda)
\end{equation}  
	where $\Gamma$ is the basic order of $B_0(\OO \SL_2(q))$. We assume moreover that there is an isometry 
	\begin{equation}
		\widehat \Phi: K_0(K\otimes \Gamma) \longrightarrow K_0(K\otimes \Lambda)
	\end{equation}
	with the following properties:
	\begin{enumerate}
	\item if $\eps \in Z(K\otimes \Gamma)$ is a central primitive idempotent, $V$ is the simple $K\otimes 
	\Gamma$-module associated with $\eps$, and $W$ is the simple $K\otimes \Lambda$-module associated with the central 
	primitive idempotent $(\id_K\otimes \Phi)(\eps) \in Z(K\otimes \Lambda)$, then $\widehat \Phi([V])=\pm[W]$.
	\item $\widehat \Phi(\langle[K\otimes P] \mid \textrm{$P$ is a projective $\Gamma$-module}\rangle_\Z) =
	\langle[K\otimes Q] \mid \textrm{$Q$ is a projective $\Lambda$-module}\rangle_\Z$
	\end{enumerate}
%\item We assume that the amalgamation depths of $Z(\Lambda)$ are as follows:
%\begin{enumerate}
%	\item $n$ for the components associated with the first four rows of the decomposition matrix \eqref{decomp}.
%	\item $2$ for the components associated with the fifth and sixth row of the decomposition matrix \eqref{decomp}.
%	\item $n-1$ for the components associated with the seventh up to the $6+l$-th row of the decomposition matrix
%		 \eqref{decomp}.
%\end{enumerate}
\end{enumerate}
Note that since $\Gamma$ is the basic order of $B_0(\OO \SL_2(q))$ we may identify $K_0(K\otimes \Gamma)$ with $K_0(K\otimes B_0(\OO\SL_2(q)))$.
The following diagram visualizes the situation we are looking at:
\begin{equation}
	\xymatrix@C=-.2\textwidth{
		Z(\Gamma) \bijar[d]_{\Phi}="p1" \ar@{^{(}->}[dr]^-{\wed_\Gamma|_{Z(\Gamma)}} \\ 
		Z(\Lambda) \ar@{^{(}->}[r]_-{\wed_\Lambda|_{Z(\Lambda)}} 
		\ar@{^{(}->}[d] & **[r] Z(A) = K\oplus K\oplus K\oplus K\oplus K\oplus K\oplus \overbrace{K\oplus\ldots \oplus K}^{\textrm{$l$ copies}} \ar@{^{(}->}[d] \\
		\Lambda \ar@{^{(}->}[r]_{\wed_\Lambda} & **[r] A = K\oplus K \oplus K \oplus K^{3\times 3} \oplus K^{2\times 2} \oplus K^{2\times 2} \oplus \underbrace{K^{2\times 2} \oplus \ldots \oplus K^{2\times 2}}_{\textrm{$l$ copies}} 	
		\POS"2,2"\ar@{}|-{\xcancel\circlearrowleft}"p1"
		\POS"2,1"\ar@{}|-{\circlearrowleft}"3,2"			
	}
\end{equation}
where the maps $\wed_\Lambda: \Lambda \longrightarrow A$ and $\wed_\Gamma: \Gamma \longrightarrow A$ denote Wedderburn embeddings whose images have the decomposition matrix \eqref{decomp} with the order of the rows being fixed. The potential non-commutativity of the top part of this diagram is the main obstacle in showing there is only one Morita equivalence class of quaternion blocks over $\OO$ reducing to the Morita equivalence class of $B_0(k\SL_2(q))$. We will now investigate this non-commutativity in greater detail.

\begin{remark}\label{remark_isometry_center}
		\begin{enumerate}
		\item Let us denote the primitive idempotents of $Z(A)$ by $\eps_1, \ldots, \eps_{6+l}$.
		A $K$-algebra automorphism of $Z(A)$ is given by a permutation of these idempotents, i. e. each automorphism is 
		of the form $\alpha_\sigma$ for some $\sigma \in \operatorname{Sym}(\{1,\ldots, 6+l\})$, where
		\begin{equation}
		\alpha_\sigma(\eps_i)=\eps_{\sigma(i)}
		\end{equation} 
		Since $A$ is split the permutation $\sigma$ determines the automorphism uniquely. 
		
		\item\label{item_det} A self-isometry of $K_0(A)$ determines an automorphism of $Z(A)$. To be more specific, we can construct the 
		corresponding permutation $\sigma$ by forgetting the signs in the signed permutation acting on $\pm\Irr(A)$.
		Similarly, an isometry between $K_0(K\otimes \Gamma)$ and $K_0(A)$ determines an embedding $Z(\Gamma) \longrightarrow Z(A)$.
		
		\item Two Wedderburn embeddings of a 
		commutative $\OO$-order into $Z(A)$ differ only by an automorphism of $Z(A)$.
		\end{enumerate}
\end{remark}

\begin{prop}\label{prop_wed_emb_perf_iso}
	There is a permutation $\sigma \in \operatorname{Sym}(\{7,\ldots, 6+l\})$ such that
	\begin{equation}
		\wed_\Lambda(Z(\Lambda)) = \alpha_{\sigma}(\wed_\Gamma (Z(\Gamma)))
	\end{equation}
	\emph{Remark}: Note that $\alpha_\sigma$ extends to an automorphism of $A$, and $\alpha_\sigma^{-1} \circ \wed_{\Lambda}$ can be 
	regarded as another Wedderburn embedding of $\Lambda$, whose image has the same decomposition matrix 
	as the image of $\wed_\Lambda$ (where the order of the rows is fixed).
\end{prop}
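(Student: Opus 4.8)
The plan is to reduce the statement to Corollary \ref{corollary_iso_factorization} by building, out of the data we are given, an honest self-isometry of $K_0(K\otimes\Gamma)$ preserving projectivity, and then reading off the permutation $\sigma$ from its factorization. First I would use the isometry $\widehat\Phi:K_0(K\otimes\Gamma)\to K_0(K\otimes\Lambda)$. By Remark \ref{remark_isometry_center}(\ref{item_det}), forgetting signs, $\widehat\Phi$ induces an embedding $Z(\Gamma)\hookrightarrow Z(A)$; property (a) of $\widehat\Phi$ says precisely that this embedding, up to identifying $Z(\Lambda)$ with its image under $\wed_\Lambda$, agrees with $\wed_\Lambda\circ\Phi$. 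Hence $\widehat\Phi$ carries the Wedderburn picture of $\Gamma$ onto $\wed_\Lambda(Z(\Lambda))$. On the other hand we also have the ``combinatorial'' isometry $\widehat\Phi_0:K_0(K\otimes\Gamma)\to K_0(K\otimes\Lambda)$ coming from matching up rows of the (common) decomposition matrix \eqref{decomp}: since both $\wed_\Gamma$ and $\wed_\Lambda$ are chosen so that the order of the rows is fixed, sending the $i$-th irreducible character of $\Gamma$ to the $i$-th irreducible character of $\Lambda$ is an isometry, and it intertwines $\wed_\Gamma$ with $\wed_\Lambda$ exactly; in particular it maps the lattice generated by projective $\Gamma$-modules onto the lattice generated by projective $\Lambda$-modules, because projectivity is detected by the decomposition matrix.

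Next I would compose: $\varphi:=\widehat\Phi_0^{-1}\circ\widehat\Phi$ is then a self-isometry of $K_0(K\otimes\Gamma)$, and by property (b) of $\widehat\Phi$ together with the corresponding property of $\widehat\Phi_0$ it maps the lattice $\langle[K\otimes P]\mid P\ \text{projective}\rangle_\Z$ onto itself. So Corollary \ref{corollary_iso_factorization} applies: we may write $\varphi=\varphi_1\circ\varphi_2$ with $\varphi_1\in\operatorname{Sym}(\{7,\ldots,6+l\})$ and $\varphi_2$ a perfect self-isometry of $\Gamma=B_0(\OO\SL_2(q))$. Now I unwind what this says at the level of centers. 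Passing to the induced automorphisms of $Z(A)$ (Remark \ref{remark_isometry_center}), $\varphi$ induces the automorphism that takes $\wed_\Gamma(Z(\Gamma))$ to $\wed_\Lambda(Z(\Lambda))$ (since $\widehat\Phi$ does this and $\widehat\Phi_0$ intertwines the two Wedderburn embeddings exactly). A perfect self-isometry $\varphi_2$ of $\Gamma$ induces an automorphism of $Z(\Gamma)$, hence fixes $\wed_\Gamma(Z(\Gamma))$ as a subalgebra of $Z(A)$. Therefore the automorphism $\alpha$ of $Z(A)$ induced by $\varphi=\varphi_1\circ\varphi_2$ and the one induced by $\varphi_1$ alone agree when restricted to $\wed_\Gamma(Z(\Gamma))$; and the automorphism induced by $\varphi_1\in\operatorname{Sym}(\{7,\ldots,6+l\})$ is exactly $\alpha_\sigma$ for the corresponding $\sigma\in\operatorname{Sym}(\{7,\ldots,6+l\})$. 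Putting it together, $\wed_\Lambda(Z(\Lambda))=\alpha(\wed_\Gamma(Z(\Gamma)))=\alpha_\sigma(\wed_\Gamma(Z(\Gamma)))$, which is the claim. The remark at the end is then immediate: $\alpha_\sigma$ only permutes isomorphic Wedderburn components ($l$ copies of $K^{2\times2}$, with the associated rows $\chi_{6+1},\ldots,\chi_{6+l}$ all equal to $(0,1,1)$), so $\alpha_\sigma$ extends to an algebra automorphism of $A$ and $\alpha_\sigma^{-1}\circ\wed_\Lambda$ has the same decomposition matrix with the same row order.

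The main obstacle I anticipate is bookkeeping rather than a genuine mathematical difficulty: one has to be scrupulous about the fact that ``center'' is being viewed inside $Z(A)$ via specific Wedderburn embeddings, that $\Phi$ and $\widehat\Phi$ are only compatible in the precise sense of property (a) (up to signs, and only on the level of central idempotents), and that the decomposition matrix pins down the row order of both $\wed_\Gamma$ and $\wed_\Lambda$ but not the column order — though here the columns correspond to the three projective indecomposables $P_1,P_2,P_3$ and any ambiguity there is already absorbed into the self-isometry group described in Proposition \ref{prop_self_iso}. A second point requiring care is that Corollary \ref{corollary_iso_factorization} is stated for $\Lambda=B_0(\OO\SL_2(q))$, i.e.\ for $\Gamma$ here, and gives a factorization with $\varphi_1$ on the \emph{left}; one must check that it is the left factor (the genuine permutation part) that survives when we restrict induced automorphisms to the center, which is exactly what the invariance of $\wed_\Gamma(Z(\Gamma))$ under perfect self-isometries provides. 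Beyond that, the argument is a short diagram-chase; no new computation with character values is needed, all of that having been done in Lemma \ref{lemma_new_selfiso} and Proposition \ref{prop_self_perfect_iso}.
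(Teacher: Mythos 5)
Your proposal is correct and follows essentially the same route as the paper: both define the same self-isometry of $K_0(K\otimes\Gamma)$ (your $\varphi=\widehat\Phi_0^{-1}\circ\widehat\Phi$ equals the paper's $\widehat\gamma$ satisfying $\widehat\wed_\Lambda\circ\widehat\Phi=\widehat\wed_\Gamma\circ\widehat\gamma$, since $\widehat\Phi_0=\widehat\wed_\Lambda^{-1}\circ\widehat\wed_\Gamma$), factor it via Corollary \ref{corollary_iso_factorization}, and use that the perfect part only induces an automorphism of $Z(\Gamma)$ and hence stabilizes $\wed_\Gamma(Z(\Gamma))$ as a subset. The only nit: the phrase that the two automorphisms ``agree when restricted to $\wed_\Gamma(Z(\Gamma))$'' should read ``have the same image of $\wed_\Gamma(Z(\Gamma))$'' — they need not agree pointwise, but that is all the argument requires, and your final sentence shows you are aware of this.
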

\begin{proof}
	We may (canonically) identify $K_0(A)$ with $\mathbb Z^{6+l}$ (equipped with the usual euclidean scalar product), since the 
	Wedderburn components of $A$ are ordered. With this identification the isomorphisms
	$\id_K\otimes \wed_\Lambda:\ K\otimes \Lambda \longrightarrow A$ and $\id_K\otimes \wed_\Gamma:\ K\otimes \Gamma 
	\longrightarrow A$ come from isometries  
	\begin{equation}
	\widehat \wed_\Lambda: K_0(K\otimes \Lambda)\longrightarrow \Z^{6+l}\quad \textrm{respectively} \quad\widehat \wed_\Gamma: K_0(K\otimes \Gamma)\longrightarrow \Z^{6+l}
	\end{equation}
	 that send the equivalence classes of simple $K\otimes \Lambda$- respectively $K\otimes \Gamma$-modules to the standard basis of $\Z^{6+l}$. Due to our assumption on the decomposition matrices of the images of $\wed_\Lambda$ and $\wed_\Gamma$, these isometries send the $\Z$-lattice 
	$\langle [P] \mid \textrm{$P$ projective $\Lambda$-module} \rangle_{\Z}$ respectively $\langle [P] \mid \textrm{$P$ projective $\Gamma$-module} \rangle_{\Z}$ onto the $\Z$-sublattice of $\mathbb Z^{6+l}$ generated by the columns 
	of the decomposition matrix \eqref{decomp}.
	
	Now let us compare the maps $\wed_\Gamma|_{Z(\Gamma)}$ and $\wed_\Lambda|_{Z(\Lambda)}\circ \Phi$. They are induced by the isometries $\widehat \wed_\Gamma$ and $\widehat \wed_\Lambda\circ \widehat\Phi$, both of which send $\langle[K\otimes P] \mid \textrm{$P$ a projective $\Gamma$-module} \rangle_{\mathbb Z}$ 
	to the $\Z$-sublattice of $\Z^{6+l}$ which is generated by the columns of the decomposition matrix (here we use the assumptions we made on $\widehat \Phi$). By Corollary \ref{corollary_iso_factorization} we can conclude that 
	$\widehat \wed_\Gamma$ and $\widehat \wed_\Lambda\circ \widehat\Phi$ differ by a self-isometry of 
	$K_0(K\otimes \Gamma)$ which is the composition of a perfect self-isometry of $\Gamma$ and 
	an element of $\operatorname{Sym}(\{7,\ldots,6+l\})$. 
	
	Assume $\gamma$ is an automorphism of $Z(\Gamma)$ which comes from a self-isometry $\widehat\gamma$ of $K_0(K\otimes \Gamma)$.
	Then $\wed_\Gamma  \circ \gamma$ comes from the isometry $\widehat \wed_\Gamma \circ 
	\widehat \gamma$., In light of the previous paragraph, we can choose a $\widehat \gamma = \widehat \gamma_1 \circ \widehat \gamma_2$, with $\widehat\gamma_1 \in \operatorname{Sym}(\{7,\ldots, 6+l\})$ and $\widehat\gamma_2$ being a perfect isometry, such that $\widehat \wed_\Lambda \circ \widehat 
	\Phi  = \widehat \wed_\Gamma \circ \widehat \gamma$.  Now we can pull $\widehat \gamma_1$ through $\widehat \wed_\Gamma$, to obtain
	$\widehat\gamma_1' \circ \widehat \wed_\Gamma \circ \widehat \gamma_2$ for a certain self-isometry $\widehat \gamma_1' $ of $K_0(A)$. Note that there is an automorphism of $Z(A)$ induced by the self-isometry 
	$\widehat\gamma_1'$, and this automorphism is equal to $\alpha_	\sigma$ with $\sigma \in \cdot \operatorname{Sym(\{7,\ldots,6+l\})}$. By Remark \ref{remark_isometry_center} (\ref{item_det}) we can conclude that
	\begin{equation}
		\wed_\Lambda|_{Z(\Lambda)} \circ \Phi = \alpha_\sigma \circ \wed_\Gamma|_{Z(\Gamma)} \circ  \gamma_2
	\end{equation} 
	where $\gamma_2$ is the automorphism of $Z(\Gamma)$ induced by the perfect isometry $\widehat \gamma_2$.
	Now we can simply take the images of the maps on both sides of this equation, and our claim immediately follows.
\end{proof}

\begin{prop}\label{prop_gl2_action}
	Assume $q\equiv \pm 3 \mod 4$. 
	There is a non-trivial outer automorphism of $\SL_2(q)$ induced by conjugation with an appropriately chosen 
	element of $\GL_2(q)$. This outer automorphism induces a non-trivial permutation of 
	the simple $B_0(k\SL_2(q))$-modules.
\end{prop}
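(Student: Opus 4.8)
The plan is to exhibit an explicit element of $\GL_2(q)$ that normalizes $\SL_2(q)$ but induces an automorphism which is not inner, and then track its effect on the simple modules in the principal block. First I would recall that $\GL_2(q)/(\SL_2(q)\cdot Z(\GL_2(q)))$ has order $2$ when $q$ is odd (the determinant map induces $\GL_2(q)/\SL_2(q)\iso \F_q^\times$, and $\det$ restricted to scalars is squaring, so the quotient by scalars times $\SL_2$ is $\F_q^\times/(\F_q^\times)^2$, which has order two). Hence conjugation by any $g\in\GL_2(q)$ of non-square determinant gives a well-defined non-trivial element of $\operatorname{Out}(\SL_2(q))$ in the relevant range of $q$; a concrete choice is $g=\operatorname{diag}(1,\nu)$ with $\nu$ a non-square in $\F_q^\times$. (One should note that $\operatorname{PGL}_2(q)$ acts faithfully and with $\operatorname{PGL}_2(q)/\operatorname{PSL}_2(q)$ of order $2$, so this automorphism of $\SL_2(q)$ is genuinely outer and not merely a product of an inner automorphism and a central one.)

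Next I would pin down what this automorphism does to the ordinary and modular characters in $B_0$. Conjugation by $g$ fixes every class function that is invariant under $\GL_2(q)$-conjugacy, but it interchanges the two semisimple classes $\operatorname{d}'(\xi)$ and $\operatorname{d}'(\xi^{\nu})$ that become fused only under $\GL_2(q)$, and likewise on the unipotent side it swaps the two classes $u_{+1}$ and $u_{-1}$ (more precisely $\eps u_\tau \leftrightarrow \eps u_{-\tau}$), since $\nu$ not being a square means the ``sign'' $\tau$ of the unipotent class is not preserved. Looking at the character table \eqref{chartab}, the only characters whose values actually depend on $\tau$ are $\chi_2,\chi_3$ (and the Steinberg and the unipotent series are unaffected on the relevant entries), so this automorphism permutes $\{\chi_2,\chi_3\}$ nontrivially while fixing $\chi_1$ and $\chi_4$; it also has some effect on $\chi_5,\chi_6$ and on the $\chi_{6+i}$, but for our purposes all that matters is that it is not the identity on $\operatorname{Irr}(B_0)$.

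Finally, to get the statement about \emph{simple modules} rather than ordinary characters, I would use the decomposition matrix \eqref{decomp}: an automorphism of $\SL_2(q)$ permutes the columns of the decomposition matrix in a way compatible with its permutation of the rows. Since our automorphism swaps rows $\chi_2$ and $\chi_3$ while fixing rows $\chi_1$, $\chi_4$, and since the column $e_1$ is the unique column with a $1$ in both rows $\chi_2$-complement positions—concretely, reading off \eqref{decomp}, $\chi_2$ has support $\{e_2\}$ and $\chi_3$ has support $\{e_3\}$ among the first three rows—the induced column permutation must swap $e_2$ and $e_3$. In particular it is nontrivial on the set of simple $B_0(k\SL_2(q))$-modules, which is exactly the claim. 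Here I would just observe that this column permutation agrees with the one forced by Proposition~\ref{prop_self_perfect_iso}(1): the involution $g\mapsto g^{-1}$ induces $(2,3)(4,5)$ on characters, and our $\GL_2$-automorphism induces the same swap on $\{\chi_2,\chi_3\}$, hence the same swap $(e_2,e_3)$ on simples.

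The main obstacle is not any deep structure but the bookkeeping in the second step: one has to be careful about exactly which $\GL_2(q)$-conjugacy fusions occur among the $\SL_2(q)$-classes appearing in \eqref{chartab} (the semisimple classes $\operatorname{d}'(\xi)$ and the unipotent classes $\eps u_\tau$), and then verify from the character table that the resulting permutation of $\operatorname{Irr}(B_0)$ is genuinely non-trivial and in particular moves $\{\chi_2,\chi_3\}$. Everything else—the outerness of the automorphism and the translation from rows to columns of the decomposition matrix—is routine once the class fusion is correctly identified.
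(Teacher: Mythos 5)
Your proof is correct and reaches the same conclusion, but the final step is genuinely different from the paper's. The paper quotes Bonnaf\'e \S 9.4.4 for the explicit identification of the simple $B_0(k\SL_2(q))$-modules $\overline{\operatorname{St}}_{\pm}^k$ as the modular reductions of $R'_\pm(\theta_0)$, and then concludes directly that twisting by $\alpha$ swaps them. You instead argue that the automorphism must permute rows and columns of the decomposition matrix compatibly, and that since $\chi_2$ and $\chi_3$ have supports $\{e_2\}$ and $\{e_3\}$ respectively, the row swap $\chi_2 \leftrightarrow \chi_3$ forces the column swap $e_2 \leftrightarrow e_3$. That argument is valid and has the advantage of not requiring the explicit identification of the Brauer characters — it only uses the shape of the decomposition matrix \eqref{decomp}, which is already established in the paper. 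Your argument for outerness (via $[\operatorname{PGL}_2(q):\operatorname{PSL}_2(q)]=2$) is also a different route from the paper's (which observes that the two unipotent $\SL_2(q)$-classes are a single $\GL_2(q)$-class); both are standard.

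Two minor imprecisions you should tidy up if you formalize this. First, your sentence ``the only characters whose values actually depend on $\tau$ are $\chi_2,\chi_3$'' is literally false — $\chi_5$ and $\chi_6$ also have $\tau$-dependent values at $\eps u_\tau$ — though you then acknowledge that $\chi_5,\chi_6$ are also moved, so the conclusion you draw ($\chi_2\leftrightarrow\chi_3$ are swapped while $\chi_1,\chi_4$ are fixed) is still right. Second, the parenthetical about $\GL_2(q)$ fusing $\operatorname{d}'(\xi)$ with $\operatorname{d}'(\xi^\nu)$ is a red herring (and not obviously accurate for that family of classes); the only fusion your argument actually uses is the fusion of the two unipotent $\SL_2(q)$-classes, which suffices since $\chi_2$ and $\chi_3$ are distinguished only by their values on $\eps u_\tau$.
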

\begin{proof}
	By looking at the parametrization of the conjugacy classes of $\SL_2(q)$ given in \cite{Bonnafe}, we see that there are two  conjugacy classes of unipotent matrices in $\SL_2(q)$, while there is only one such class in $\GL_2(q)$. Hence 
	$\GL_2(q)$ acts non-trivially on the conjugacy classes of $\SL_2(q)$, and therefore also on its (absolutely) irreducible characters. It can be seen by inspection of the character table \cite[Table 5.4]{Bonnafe} (see also \cite[Excercise 4.3]{Bonnafe}) that the unique non-trivial outer automorphism of $\SL_2(q)$ induced by an element of $\GL_2(q)$ swaps the characters $R_{+}(\alpha_0)$ and $R_{-}(\alpha_0)$ as well as 
	the characters $R_+'(\theta_0)$ and $R_-'(\theta_0)$. The reduction of the characters  $R_+'(\theta_0)$
	and $R_-'(\theta_0)$ to $2$-regular conjugacy classes gives the Brauer characters belonging to the simple 
	$B_0(k\SL_2(q))$-modules $\overline{\operatorname{St}}_+^k$ and $\overline{\operatorname{St}}_-^k$ (this is proven in \cite[Section 9.4.4]{Bonnafe}). This shows that twisting by  $\alpha$ swaps the two simple modules $\overline{\operatorname{St}}_+^k$ and $\overline{\operatorname{St}}_-^k$.
\end{proof}

Note that an automorphism of $B_0(\OO \SL_2(q))$ gives rise to an automorphism of the basic order of 
$B_0(\OO \SL_2(q))$ inducing the same permutation on isomorphism classes of simple modules. Therefore we get the following:

\begin{corollary}\label{corollary_full_permutation_auto}
	Assume $q\equiv \pm 3 \mod 4$. By $n$ we denote the $2$-valuation of the order of $\SL_2(q)$, and  we assume $n\geq 3$. 
	Again, let $\Gamma$ be the basic algebra of $B_0(\OO \SL_2(q))$ and let $S_1$, $S_2$ and $S_3$ denote its simple modules.
	Then for every automorphism $\alpha$ of $k\otimes \Gamma$ there exists an automorphism $\widehat \alpha$ of 
	$\Gamma$ such that $(\id_k\otimes \widehat \alpha)\circ \alpha^{-1}$ fixes all simple modules, that is,
	$S_i^{(\id_k\otimes \widehat \alpha)\circ \alpha^{-1}} \iso S_i$ for each $i\in \{1,2,3\}$ 
	
	\emph{Note:} $S_i^{(\id_k\otimes \widehat \alpha)\circ \alpha^{-1}}$ denotes the module obtained from $S_i$ by letting $k\otimes \Gamma$ act on it through the automorphism ${(\id_k\otimes \widehat \alpha)\circ \alpha^{-1}}$. Later on we will also use the analogous notation for bimodules.
\end{corollary}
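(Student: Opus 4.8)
The plan is to reduce the statement to determining which permutations of $\{S_1,S_2,S_3\}$ can be induced by $k$-algebra automorphisms of $k\otimes\Gamma$, and then to realise each of those permutations by an automorphism of $\Gamma$. Identify $k\otimes\Gamma$ with the algebra $\bar{\Lambda}=Q(3\mathcal K)^c$, $c=2^{n-2}$, of Section~\ref{section_q3k} (for definiteness I treat the case $q\equiv 3\bmod 4$; the case $q\equiv 1\bmod 4$ is handled in the same way with $\bar{\Lambda}$ replaced by the corresponding basic algebra from Erdmann's list), so that $S_1,S_2,S_3$ correspond to the three vertices of the quiver $Q$. The key reduction is the claim that \emph{every automorphism of $\bar{\Lambda}$ induces on $\{S_1,S_2,S_3\}$ either the identity or the transposition exchanging $S_2$ and $S_3$.} Granting this, we put $\widehat{\alpha}:=\id_\Gamma$ when $\alpha$ induces the identity; and when $\alpha$ exchanges $S_2$ and $S_3$, we let $\widehat{\alpha}$ be the automorphism of $\Gamma$ that, by the observation preceding the corollary, corresponds to the automorphism of $B_0(\OO\SL_2(q))$ induced by $\GL_2(q)$ as in Proposition~\ref{prop_gl2_action}. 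By the proof of that proposition this automorphism fixes the trivial module $S_1$ and exchanges $\overline{\operatorname{St}}_+^k=S_2$ with $\overline{\operatorname{St}}_-^k=S_3$, so $\id_k\otimes\widehat{\alpha}$ induces the same permutation of $\{S_1,S_2,S_3\}$ as $\alpha$; hence $(\id_k\otimes\widehat{\alpha})\circ\alpha^{-1}$ induces the identity, i.e.\ fixes all three simple modules, as required.

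It remains to prove the claim, namely that every automorphism $\varphi$ of $\bar{\Lambda}$ fixes $S_1$. Such a $\varphi$ permutes the projective indecomposables $P_1,P_2,P_3$ (with $P_i$ having simple top $S_i$) by a permutation $\pi\in\operatorname{Sym}(\{1,2,3\})$, and the induced isomorphisms $\operatorname{End}_{\bar{\Lambda}}(P_i)\cong e_i\bar{\Lambda}e_i$ force $\pi$ to leave the Cartan matrix $C$ of Proposition~\ref{prop_decomp_matrix} invariant. Since $\dim_k e_i\bar{\Lambda}e_i=C_{ii}$ equals $4$ for $i=1$ and $2+2^{n-2}$ for $i\in\{2,3\}$, the case $n>3$ is immediate: these numbers are not all equal, so $\pi$ must fix $1$ and hence $\pi\in\{\id,(2\,3)\}$.

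The main obstacle is the case $n=3$, where $C$ is invariant under all of $\operatorname{Sym}(\{1,2,3\})$ and the dimension count gives nothing. Here I would instead distinguish $e_1\bar{\Lambda}e_1$ from $e_2\bar{\Lambda}e_2$ as abstract $k$-algebras, recalling that both are commutative and local with residue field $k$ (Proposition~\ref{prop_epi_center}). For $c=1$ the relations \eqref{eq_def_rel} include $\gamma\kappa=\delta$ and $\kappa\eta=\beta\gamma\beta$, so $\delta\eta=\gamma(\kappa\eta)=(\gamma\beta)^2$; combined with Proposition~\ref{prop_all_around} (which reduces every path to a back-and-forth path) this shows that every path from the vertex $2$ to itself is a power of $\gamma\beta$, so $e_2\bar{\Lambda}e_2\cong k[t]/(t^4)$ is generated by a single element as a $k$-algebra. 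On the other hand $e_1\bar{\Lambda}e_1$ is generated by $\beta\gamma$ and $\kappa\lambda$ (Propositions~\ref{prop_central_el} and \ref{prop_all_around}), and these are linearly independent modulo the square of the radical, since neither $\beta\gamma$ nor $\kappa\lambda$ occurs as a term in any of the relations \eqref{eq_def_rel}, so $\beta\gamma$ and $\kappa\lambda$ are part of a $k$-basis of $\bar{\Lambda}$ by Proposition~\ref{prop_all_around}; thus $e_1\bar{\Lambda}e_1$ is not generated by one element. Hence $e_1\bar{\Lambda}e_1\not\cong e_2\bar{\Lambda}e_2\cong e_3\bar{\Lambda}e_3$, so $\pi$ fixes $1$ and again $\pi\in\{\id,(2\,3)\}$. (Alternatively, the case $n=3$ may be extracted from \cite[Proposition~1.1]{KessarQuat}, just as for Corollary~\ref{corollary_iso_factorization}.) With the claim established, the construction of the first paragraph finishes the proof.
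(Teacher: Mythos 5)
Your argument for $n>3$ (and implicitly for $q\equiv 1\bmod 4$) is essentially the paper's: the unequal diagonal entries of the Cartan matrix force $\pi$ to fix $S_1$, so $\pi\in\{\id,(2\,3)\}$, and $(2\,3)$ is realised by the $\GL_2(q)$-automorphism of Proposition~\ref{prop_gl2_action}. The treatment of the remaining case $n=3$, $q\equiv 3\bmod 4$ is, however, incorrect, and this is precisely where the content of the corollary lies.

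First, you substitute $c=1$, but $c=2^{n-2}=2$ when $n=3$ (the hypothesis $n\geq 3$ excludes $c=1$). Second, even with the correct value $c=2$ the claim $e_1\bar\Lambda e_1\not\cong e_2\bar\Lambda e_2$ is false. One computes, using the relations \eqref{eq_def_rel} and Proposition~\ref{prop_backbackforth}, that in $e_1\bar\Lambda e_1$ the generators $\beta\gamma$, $\kappa\lambda$ satisfy $(\beta\gamma)^2=\beta\delta\lambda=(\kappa\lambda)^2$, $(\beta\gamma)(\kappa\lambda)=\beta(\delta\eta\delta)\lambda=(\beta\delta\eta)\delta\lambda=0$ and $(\beta\gamma)^3=(\beta\gamma\kappa)\lambda\kappa\lambda=0$; while in $e_2\bar\Lambda e_2$ the generators $\gamma\beta$, $\delta\eta$ satisfy $(\gamma\beta)^2=\delta(\lambda\beta)=\delta\eta\delta\eta=(\delta\eta)^2$, $(\gamma\beta)(\delta\eta)=(\gamma\beta\delta)\eta=0$ and $(\gamma\beta)^3=0$. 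So both corners are isomorphic to $k[x,y]/(xy,\,x^2-y^2,\,x^3)$. (For general $c$ the corner $e_2\bar\Lambda e_2$ is $k[x,y]/(xy,\,x^2-y^c,\,y^{c+1})$ of dimension $2+c$, which coincides with $e_1\bar\Lambda e_1$ exactly when $c=2$.) Consequently your ``key reduction'' --- that every automorphism of $\bar\Lambda$ fixes $S_1$ --- fails for $n=3$: as the paper records, $\Gamma\cong\OO\tilde A_4=\OO Q_8\rtimes C_3$ by \cite[Theorem~A]{KessarQuat}, and this order admits an automorphism inducing a $3$-cycle on simple modules. Your construction of $\widehat\alpha$ only provides for the identity and the transposition, so the $3$-cycle is an unhandled case and the proof has a genuine gap there.

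Your parenthetical alternative (``extract the case $n=3$ from \cite[Proposition~1.1]{KessarQuat}'') points in the right direction but cites the wrong result: what the paper actually invokes is \cite[Theorem~A]{KessarQuat} to identify $\Gamma$ with $\OO\tilde A_4$, and \cite[Lemma~1.2]{KessarQuat} to produce an order-$3$ automorphism of $\Gamma$; together with the $\GL_2(q)$-automorphism this generates the full symmetric group on the three simple modules, which is what one needs.
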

\begin{proof}
	We know that the Cartan matrix of $B_0(\SL_2(q))$ looks as follows:
	\begin{equation}
		\underbrace{\left(\begin{array}{ccc} 4 & 2 & 2 \\ 2& 2+ 2^{n-2} & 2^{n-2} \\ 2 & 2^{n-2} & 2+2^{n-2} \end{array}\right)}_{\textrm{if $q\equiv 3 \mod 4$}} \quad \textrm{or} \quad
		\underbrace{\left(\begin{array}{ccc} 2^n & 2^{n-1} & 2^{n-1} \\ 2^{n-1}& 2+ 2^{n-2} & 2^{n-2} \\ 2^{n-1} & 2^{n-2} & 2+2^{n-2} \end{array}\right)}_{\textrm{if $q\equiv 1 \mod 4$}}
	\end{equation}
	If $\alpha$ is an automorphism of $k\otimes \Gamma$, then the dimension of the endomorphism ring of the projective cover of $S_i^\alpha$ is the same as the dimension of the endomorphism ring of the projective cover of $S_i$ (for each $i$). If either $n>3$ or $q\equiv 1 \mod 4$, then the diagonal entries of the above Cartan matrices are not all equal, which implies that $\alpha$ needs to fix one isomorphism class of simple modules, and it might swap the other two. Hence, if $\alpha$ is non-trivial, then it necessarily needs to induce the same permutation on simple modules as the automorphism of $\Gamma$ coming from Proposition \ref{prop_gl2_action}. 
	
	The case $q\equiv 3 \mod 4$ and $n=3$ is special, since then the Cartan matrix imposes no restriction 
	on the permutation of the simple modules induced by $\alpha$. However, in that case, \cite[Theorem A]{KessarQuat}
	implies that $\Gamma \iso \OO \tilde A_4 = \OO Q_8 \rtimes C_3$, and \cite[Lemma 1.2]{KessarQuat} implies that
	this $\OO$-order has an automorphism which induces a permutation of order three on isomorphism classes of simple modules. It follows that this automorphism together with the automorphism from Proposition \ref{prop_gl2_action}
	generates a full symmetric group on three points, which implies our assertion.
\end{proof}

\section{Transfer to other algebras of quaternion type}

In this section we are going to use a technique reminiscent of the one used in \cite{EiseleDerEq} to get a theorem similar to Theorem \ref{thm_unique_lift} for arbitrary blocks of quaternion defect. Note that, technically, Theorem \ref{thm_unique_lift} cannot be applied to any block of quaternion type yet, but only to their basic algebras if they happen to be isomorphic 
to $Q(3\mathcal K)^c$. The fact that Theorem \ref{thm_unique_lift} remains valid if one  replaces the algebra $\bar{\Lambda}=Q(3\mathcal K)^c$ by an algebra Morita-equivalent to it would be a side-note at best.
The main idea in \cite{EiseleDerEq} was that, up to technicalities, one can in fact replace $\bar{\Lambda}=Q(3\mathcal K)^c$ by an algebra derived equivalent to it, instead of just Morita-equivalent. In our case the
technical side of this argument is in fact much simpler than in \cite{EiseleDerEq}, and hence we will only have to use well-known facts about derived equivalences.

\begin{defi}[Admissible lifts of quaternion blocks]\label{defi_admissible}
	We call an $\OO$-order $\Lambda$  \emph{admissible} if the following three conditions hold:
	\begin{enumerate}
		\item $K\otimes \Lambda$ is split semisimple
		\item $\Lambda$ is symmetric 
		\item For some prime $q$ there is an isometry $\widehat \Phi:\ K_0(K\otimes \Lambda) \longrightarrow K_0(K\otimes B_0(\OO\SL_2(q)))$
		and an isomorphism $\Phi:\ Z(\Lambda) \longrightarrow Z(B_0(\OO \SL_2(q)))$
		which satisfy the assumptions made at the beginning of section \ref{section_fix_wedderburn}.
	\end{enumerate}	
	If  $\bar{\Lambda}$ is a finite-dimensional $k$-algebra, then an \emph{admissible lift} is an admissible $\OO$-order 
	$\Lambda$ with $k\otimes \Lambda \iso \bar{\Lambda}$ 
\end{defi}

\begin{lemma}[Admissible lifts of $Q(3 \mathcal K)^c$]\label{lemma_q3k}
Let $\Lambda$ be an admissible lift of $Q(3 \mathcal K)^c$ ($c=2^{n-2}$, $3\leq n$ arbitrary). Assume that 
$K\otimes B_0(\OO \SL_2(q))$ is split, where $q\equiv 3 \mod 4$ is a prime such that $q+1$ has $2$-valuation $n-1$.
Then $\Lambda$ is isomorphic to the basic order of $B_0(\OO\SL_2(q))$. 
\end{lemma}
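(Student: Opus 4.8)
The plan is to reduce the statement to Theorem~\ref{thm_unique_lift} by exhibiting both $\Lambda$ and the basic order $\Gamma$ of $B_0(\OO\SL_2(q))$ as $\OO$-suborders of a single copy of the split semisimple algebra $A$ of that theorem, arranged so that hypotheses (1)--(4) of Theorem~\ref{thm_unique_lift} hold; the theorem then gives that $\Lambda$ and $\Gamma$ are conjugate in $A$, hence isomorphic as $\OO$-orders, which is the claim. Throughout I take $q$ to be the prime with respect to which the admissibility of $\Lambda$ is witnessed: by the conventions of Section~\ref{section_fix_wedderburn} one has $q\equiv 3\bmod 4$, and since the witnessing isometry $\widehat\Phi$ identifies $K_0(K\otimes\Gamma)$ with $K_0(K\otimes\Lambda)$, comparing the number of irreducible $K$-characters on the two sides forces $v_2(q^2-1)=n$, hence $v_2(q+1)=n-1$, so $q$ is as in the statement.

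First I would verify that $\Lambda$ and $\Gamma$ already satisfy conditions (1)--(3) of Theorem~\ref{thm_unique_lift}. Being an admissible lift of $\bar\Lambda=Q(3\mathcal K)^c$, $\Lambda$ is symmetric with split semisimple $K$-span, satisfies $k\otimes\Lambda\cong\bar\Lambda$, and has decomposition matrix \eqref{decomp} up to permutation of rows and columns (this is part of what ``admissible'' encodes, via the assumptions of Section~\ref{section_fix_wedderburn}, and in any case follows from Propositions~\ref{prop_epi_center}, \ref{prop_crit_decomp_01} and~\ref{prop_decomp_matrix}). For $\Gamma$: since $q\equiv 3\bmod 4$ and $v_2(q+1)=n-1$, the discussion around \eqref{decomp} shows $B_0(k\SL_2(q))$ is Morita equivalent to $Q(3\mathcal K)^{2^{n-2}}=\bar\Lambda$, so $k\otimes\Gamma\cong\bar\Lambda$; $\Gamma$ is a symmetric order because it is Morita equivalent to a block of a group algebra; and since $K\otimes B_0(\OO\SL_2(q))$ is split by hypothesis and split semisimplicity is Morita invariant, $K\otimes\Gamma$ is split semisimple, with decomposition matrix \eqref{decomp}. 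I would then fix Wedderburn embeddings $\wed_\Gamma\colon\Gamma\hookrightarrow A$ and $\wed_\Lambda\colon\Lambda\hookrightarrow A$ whose images have decomposition matrix exactly \eqref{decomp} with the row order fixed; such embeddings exist because both $K$-spans are split semisimple with Wedderburn components of the dimensions prescribed by the column sums of \eqref{decomp}, exactly as at the start of the proof of Theorem~\ref{thm_unique_lift}.

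The remaining point is hypothesis (4), that the images of the two centers coincide in $A$, and this is precisely what Proposition~\ref{prop_wed_emb_perf_iso} supplies, applied to $\Lambda$, $\Gamma$, the isomorphism $\Phi$, the isometry $\widehat\Phi$ and the embeddings just chosen: there is $\sigma\in\operatorname{Sym}(\{7,\ldots,6+l\})$ with $\wed_\Lambda(Z(\Lambda))=\alpha_\sigma(\wed_\Gamma(Z(\Gamma)))$. The key observation is that $\sigma$ moves only the indices $7,\ldots,6+l$, whose rows in \eqref{decomp} are all equal to $(0,1,1)$; hence $\alpha_\sigma$ extends to an automorphism of $A$ permuting the corresponding $K^{2\times 2}$-summands, and $\alpha_\sigma^{-1}\circ\wed_\Lambda$ is again a Wedderburn embedding of $\Lambda$ realizing \eqref{decomp} with the same fixed row order. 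After replacing $\wed_\Lambda$ by $\alpha_\sigma^{-1}\circ\wed_\Lambda$ and identifying $\Lambda$ and $\Gamma$ with their images in $A$, all of (1)--(4) hold, so Theorem~\ref{thm_unique_lift} gives $\Lambda\cong\Gamma$. The step that needs the most care is this last one: one must check that correcting $\wed_\Lambda$ by $\alpha_\sigma$ does not disturb hypothesis (3) of Theorem~\ref{thm_unique_lift} --- which is exactly where one uses that the affected rows of \eqref{decomp} are indistinguishable --- and that the admissibility data $(\Phi,\widehat\Phi)$ is in the precise form demanded by Proposition~\ref{prop_wed_emb_perf_iso}. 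Everything else is unwinding the definition of ``admissible lift'' and quoting the material of Sections~\ref{section_q3k}--\ref{section_fix_wedderburn}.
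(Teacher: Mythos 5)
Your proposal is correct and follows essentially the same route as the paper: fix Wedderburn embeddings of $\Lambda$ and of the basic order $\Gamma$ into $A$ realizing the decomposition matrix \eqref{decomp} with fixed row order, invoke Proposition~\ref{prop_wed_emb_perf_iso} to align the images of the centers (after twisting $\wed_\Lambda$ by the resulting $\alpha_\sigma$, which is harmless since $\sigma$ only permutes the identical rows $7,\ldots,6+l$), and then apply Theorem~\ref{thm_unique_lift}. Your opening remark pinning down the witnessing prime --- that the rank of $K_0$ forces $v_2(q^2-1)=n$ and hence $v_2(q+1)=n-1$ --- is a useful explication of a point the paper leaves implicit; otherwise the two arguments coincide step for step.
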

\begin{proof}
	Let $\Gamma$ be the basic order of $B_0(\OO\SL_2(q))$. We need to
	show that $\Lambda$ and $\Gamma$ are isomorphic.
	Proposition \ref{prop_epi_center} and Proposition \ref{prop_crit_decomp_01} imply that both $\Lambda$ and $\Gamma$ have decomposition numbers $\leq 1$. Now Proposition \ref{prop_decomp_matrix} implies that the decomposition matrices of $\Lambda$ and $\Gamma$ are both equal to the one given in \eqref{decomp}, up to permutation of rows and columns. Let 
	\begin{equation}
		A := K \oplus K \oplus K \oplus K^{3\times 3} \oplus K^{2\times 2} \oplus K^{2\otimes 2} \oplus \bigoplus_{i=7}^{6+l} K^{2\times 2}
	\end{equation}
	From the decomposition matrices of $\Lambda$ and $\Gamma$ we know that this algebra $A$ is isomorphic to both $K\otimes\Lambda$ and $K\otimes\Gamma$. We can choose Wedderburn embeddings $\wed_\Lambda:\ \Lambda \longrightarrow A$ and $\wed_\Gamma:\ \Gamma \longrightarrow A$ such that the decomposition matrices of their images are both equal
	to \eqref{decomp} (the order of the rows now being fixed).
	 Moreover, Proposition \ref{prop_wed_emb_perf_iso} implies that there is an automorphism $\alpha$ of $A$ permuting the Wedderburn components $7,\ldots, 6+l$
	such that $\alpha(\wed_\Lambda (Z(\Lambda)))= \wed_\Gamma(Z(\Gamma))$. Note that $\wed_\Lambda (Z(\Lambda)) = Z(\wed_\Lambda(\Lambda))$ (and the same for $\Gamma$). Now we may apply Theorem \ref{thm_unique_lift} to
	the $\OO$-orders $\alpha(\wed_\Lambda (\Lambda))$ and $\wed_\Gamma(\Gamma)$.
	It follows that $\alpha(\wed_{\Lambda}(\Lambda))$ and $\wed_\Gamma(\Gamma)$ are conjugate in $A$, which implies that $\Lambda$ and $\Gamma$ are isomorphic.
\end{proof}

At this point we have to look at the other two Morita equivalence classes of $2$-blocks with defect group 
$Q_{2^n}$. These are the algebras $Q(3\mathcal A)_2^c$ and $Q(3\mathcal B)^c$ from the appendix of \cite{TameClass} (note: we parametrize these algebras as in \cite{HolmDerEq}, using only a single parameter ``$c$'', as that is the only undetermined parameter in the context of blocks).
The article \cite{HolmDerEq} gives us an explicit (one-sided) two-term tilting complex in 
$\mathcal K^b(\projC - Q(3\mathcal A)_2^c)$ with endomorphism ring $Q(3\mathcal B)^c$
and an explicit two-term tilting complex in $\mathcal K^b(\projC - Q(3\mathcal A)_2^c)$ with 
endomorphism ring $Q(3\mathcal K)^c$ (the algebra we have been looking at exclusively so far).

We will need a few well-known results on derived equivalences in order to get a version of Lemma 
\ref{lemma_q3k} for the algebras $Q(3\mathcal A)_2^c$ and $Q(3\mathcal B)^c$. The first one is a theorem of Rickard which tells us that a derived equivalences between two $k$-algebras give rise to derived equivalences between two $\OO$-orders reducing to these respective $k$-algebras. The caveat of this is that only one of the two $\OO$-orders
can be chosen freely, while the other one is then determined up to isomorphism by this choice.
Note that while our notation is mostly standard, there is one peculiarity that may be worth pointing out: we consider one-sided tilting complexes as complexes of right modules, and the endomorphism ring has the usual composition as its multiplication (that is, the endomorphism ring of a module acts on the module from the left). With this convention, 
a ring is derived equivalent to the endomorphism ring of a tilting complex, rather than the opposite ring thereof.
\begin{thm}[see {\cite[Theorem 3.3]{RickardLiftTilting}}]\label{thm_rickard_lifting}
	If $\Lambda$ is an $\OO$-order and $T\in \mathcal K^b(\projC-k\otimes \Lambda)$ is a 
	tilting complex, then there is a tilting complex $\widehat T\in \mathcal K^b(\projC-\Lambda)$ (unique up to isomorphism) with $k\otimes \widehat T \iso T$. Moreover, $\End_{\mathcal D^b(\Lambda)}(\widehat T)$
	is an $\OO$-order, and $k\otimes \End_{\mathcal D^b(\Lambda)}(\widehat T) \iso \End_{\mathcal D^b(k\otimes \Lambda)}(T)$.
\end{thm}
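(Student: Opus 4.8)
The plan is to construct $\widehat T$ by lifting $T$ step by step along the tower of quotients $\Lambda_m:=\Lambda/\pi^m\Lambda$ (so $\Lambda_1=\bar\Lambda:=k\otimes\Lambda$ and $\Lambda=\varprojlim_m\Lambda_m$) and then to pass to the inverse limit, which is where completeness of $\OO$ enters. Suppose $T$ has been lifted to a complex $T_m\in\mathcal K^b(\projC-\Lambda_m)$ of finitely generated projective $\Lambda_m$-modules with $T_m\otimes_{\Lambda_m}\Lambda_1\cong T$. The surjection $\Lambda_{m+1}\twoheadrightarrow\Lambda_m$ has square-zero kernel $\pi^m\Lambda/\pi^{m+1}\Lambda$, which as a $\Lambda_m$-bimodule is, via multiplication by $\pi^m$, just $\bar\Lambda$. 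Since finitely generated projective modules lift uniquely along a surjection with nilpotent kernel, the terms of $T_m$ lift to finitely generated projective $\Lambda_{m+1}$-modules, and lifting the differential so that its square again vanishes is a square-zero deformation problem; using $T_m\otimes^{\mathbf{L}}_{\Lambda_m}\bar\Lambda\cong T$ together with the change-of-rings identity $\operatorname{Hom}_{\mathcal D^b(\Lambda_m)}(T_m,M)\cong\operatorname{Hom}_{\mathcal D^b(\bar\Lambda)}(T,M)$ for complexes $M$ of $\bar\Lambda$-modules, the obstruction to such a lift lies in $\operatorname{Hom}_{\mathcal D^b(\bar\Lambda)}(T,T[2])$ and, once it vanishes, the set of lifts is a torsor under $\operatorname{Hom}_{\mathcal D^b(\bar\Lambda)}(T,T[1])$. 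Both groups vanish because $T$ is a tilting, hence self-orthogonal, complex, so $T_{m+1}$ exists and is unique up to isomorphism. Choosing the lifts and identifications compatibly yields a tower $\{T_m\}$, and, $\Lambda$ being $\pi$-adically complete, $\widehat T^i:=\varprojlim_m T_m^i$ is a finitely generated projective $\Lambda$-module while the differentials pass to the limit; this produces $\widehat T\in\mathcal K^b(\projC-\Lambda)$ with $k\otimes\widehat T\cong T$, unique up to isomorphism.

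It remains to verify that $\widehat T$ is a tilting complex over $\Lambda$ and to identify $\End_{\mathcal D^b(\Lambda)}(\widehat T)$. Set $C^{\bullet}:=\operatorname{RHom}_\Lambda(\widehat T,\widehat T)$; since $\widehat T$ is a bounded complex of finitely generated projectives over the $\OO$-order $\Lambda$, $C^{\bullet}$ is represented by a bounded complex of finitely generated free $\OO$-modules, $H^{n}(C^{\bullet})=\operatorname{Hom}_{\mathcal D^b(\Lambda)}(\widehat T,\widehat T[n])$, and base change along $\OO\to k$ (valid since $\widehat T$ is perfect) gives $k\otimes_{\OO}C^{\bullet}\cong\operatorname{RHom}_{\bar\Lambda}(T,T)$, so $H^{n}(k\otimes_{\OO}C^{\bullet})=\operatorname{Hom}_{\mathcal D^b(\bar\Lambda)}(T,T[n])$. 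The universal coefficient theorem over the discrete valuation ring $\OO$ gives, for each $n$,
\begin{equation*}
0\longrightarrow k\otimes H^{n}(C^{\bullet})\longrightarrow H^{n}\bigl(k\otimes_{\OO}C^{\bullet}\bigr)\longrightarrow\operatorname{Tor}^{\OO}_{1}\bigl(k,H^{n+1}(C^{\bullet})\bigr)\longrightarrow 0 .
\end{equation*}
For $n\neq 0$ the middle term vanishes (self-orthogonality of $T$), so $k\otimes H^{n}(C^{\bullet})=0$; being finitely generated over $\OO$, $H^{n}(C^{\bullet})$ then vanishes by Nakayama's lemma, i.e.\ $\widehat T$ is self-orthogonal. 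The sequence at $n=-1$ now gives $\operatorname{Tor}^{\OO}_{1}(k,H^{0}(C^{\bullet}))=0$, so $\End_{\mathcal D^b(\Lambda)}(\widehat T)=H^{0}(C^{\bullet})$ is $\OO$-torsion-free and finitely generated, hence an $\OO$-order; and the sequence at $n=0$ collapses to $k\otimes\End_{\mathcal D^b(\Lambda)}(\widehat T)\cong\End_{\mathcal D^b(\bar\Lambda)}(T)$.

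Finally, one checks that $\widehat T$ generates $\mathcal K^b(\projC-\Lambda)$ as a thick subcategory. Write $B:=\End_{\mathcal D^b(\Lambda)}(\widehat T)$, fix a representative of $\widehat T$ by a complex of $B$-$\Lambda$-bimodules, and put $\widehat P:=\operatorname{RHom}_\Lambda(\widehat T,\Lambda)$, a complex with bounded finitely generated cohomology. Base change gives $k\otimes^{\mathbf{L}}_{\OO}\widehat P\cong\operatorname{RHom}_{\bar\Lambda}(T,\bar\Lambda)$, which is perfect over $k\otimes B\cong\End_{\mathcal D^b(\bar\Lambda)}(T)$ (it is the inverse tilting complex of $T$); since perfection over an $\OO$-order is detected modulo $\pi$, $\widehat P$ is perfect over $B$, so $\widehat P\otimes^{\mathbf{L}}_{B}\widehat T$ lies in $\mathrm{thick}_{\mathcal D^b(\Lambda)}(\widehat T)$ and in $\mathcal D^b(\Lambda)$. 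The evaluation morphism $\widehat P\otimes^{\mathbf{L}}_{B}\widehat T\to\Lambda$ becomes, after applying $k\otimes^{\mathbf{L}}_{\OO}(-)$ (using that $B$ is $\OO$-flat), the morphism $\operatorname{RHom}_{\bar\Lambda}(T,\bar\Lambda)\otimes^{\mathbf{L}}_{k\otimes B}T\to\bar\Lambda$, which is an isomorphism because $T$ is a tilting complex over $\bar\Lambda$. Its cone therefore lies in $\mathcal D^b(\Lambda)$ and is annihilated by $k\otimes^{\mathbf{L}}_{\OO}(-)$, hence vanishes by Nakayama; thus $\Lambda\cong\widehat P\otimes^{\mathbf{L}}_{B}\widehat T\in\mathrm{thick}(\widehat T)$, so $\widehat T$ generates $\mathcal K^b(\projC-\Lambda)$. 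Together with self-orthogonality, this makes $\widehat T$ a tilting complex.

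The step I expect to be the main obstacle is the first one: carrying out the square-zero deformation theory of complexes carefully enough to pin the obstruction down as an element of $\operatorname{Hom}_{\mathcal D^b(\bar\Lambda)}(T,T[2])$, and then checking that the stagewise lifts genuinely glue — over the complete ring $\Lambda$, not merely as a formal pro-object — into a bounded complex of finitely generated projective $\Lambda$-modules. This is exactly the content of \cite[Theorem 3.3]{RickardLiftTilting}, which one may simply cite.
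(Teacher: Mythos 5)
The paper states this result as a citation to Rickard's \cite[Theorem 3.3]{RickardLiftTilting} and gives no proof, and your argument is a correct reconstruction of Rickard's approach: lifting the complex stage by stage along the $\pi$-adic quotients via square-zero deformation theory, with the obstruction and indeterminacy living in $\operatorname{Hom}_{\mathcal D^b(k\otimes\Lambda)}(T,T[2])$ and $\operatorname{Hom}_{\mathcal D^b(k\otimes\Lambda)}(T,T[1])$ (which vanish by self-orthogonality), passing to the inverse limit using completeness, and then verifying the order and tilting properties via universal coefficients and Nakayama. You already note at the end that this is precisely the content of the cited theorem, so there is nothing to add.
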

A second fact we will need is that two-term tilting complexes are determined by their terms.
\begin{thm}[{see \cite[Corollary 8]{JensenXuZDegenerations}}]\label{thm_jxz}
	Let $A$ be a $k$-algebra and let $P_1$ and $P_0$ be projective $A$-modules. Then there is at most one 
	tilting complex of the form $0\longrightarrow P_1\longrightarrow P_0\longrightarrow 0$, up to isomorphism
	in $\mathcal K^b(\projC-A)$.
\end{thm}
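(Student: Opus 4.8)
The plan is to view two-term complexes with fixed terms as points of an affine space acted on by an algebraic group, and to use that the tilting property forces the relevant orbit to be open and dense; since two open dense subsets of an irreducible variety must intersect, there can then be at most one such orbit. Concretely, fix finitely generated projective right $A$-modules $P_1$ and $P_0$ (we may and do work in the finite-dimensional setting of the applications, which is also that of \cite{JensenXuZDegenerations}), put $V := \operatorname{Hom}_A(P_1,P_0)$, and let the algebraic group $G := \operatorname{Aut}_A(P_1)\times\operatorname{Aut}_A(P_0)$ act linearly on the finite-dimensional $k$-space $V$ by $(\alpha,\beta)\cdot f := \beta f\alpha^{-1}$. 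Write $T_f$ for the complex $0\to P_1\xrightarrow{f}P_0\to 0$. If $f$ and $g$ lie in one $G$-orbit then $T_f\cong T_g$ already as complexes of $A$-modules, hence in $\mathcal{K}^b(\projC-A)$, so it suffices to prove that the set of $f\in V$ for which $T_f$ is a tilting complex is contained in a single $G$-orbit. (If $P_1=0$ or $P_0=0$ the statement is trivial, so assume both are nonzero.)

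The key step is the natural identification
\begin{equation}
\operatorname{Hom}_{\mathcal{K}^b(\projC-A)}(T_f,\,T_f[1]) \;\cong\; V\,/\,\bigl(f\operatorname{End}_A(P_1)+\operatorname{End}_A(P_0)f\bigr).
\end{equation}
A degree-zero chain map $T_f\to T_f[1]$ is just an arbitrary homomorphism $P_1\to P_0$ — there is no commutativity constraint, because the terms of $T_f$ adjacent to $P_1$ and those of $T_f[1]$ adjacent to $P_0$ all vanish — and a short homotopy computation shows such a chain map is null-homotopic precisely when it lies in $f\operatorname{End}_A(P_1)+\operatorname{End}_A(P_0)f$. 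Crucially, this same subspace is the image of the differential at the identity of the orbit map $G\to V,\ (\alpha,\beta)\mapsto(\alpha,\beta)\cdot f$, i.e.\ it is the tangent space $T_f(G\cdot f)$ of the orbit at $f$. Hence $\operatorname{Hom}_{\mathcal{K}^b}(T_f,T_f[1])=0$ if and only if $T_f(G\cdot f)=V$.

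Now assume $T_f$ is a tilting complex. Then $\operatorname{Hom}_{\mathcal{K}^b}(T_f,T_f[1])=0$, so by the previous paragraph $T_f(G\cdot f)=V$. The group $G$ is a connected linear algebraic group over $k$ (being the unit group of the finite-dimensional algebra $\operatorname{End}_A(P_1)\times\operatorname{End}_A(P_0)$, an extension of a product of general linear groups by a connected unipotent group) acting linearly on the irreducible affine variety $V$; by the basic theory of algebraic transformation groups the orbit $G\cdot f$ is a smooth, irreducible, locally closed subvariety, so $\dim(G\cdot f)=\dim_k T_f(G\cdot f)=\dim_k V$, and a full-dimensional irreducible locally closed subset of the irreducible variety $V$ is open and dense. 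If $g$ is a second element with $T_g$ tilting, then $G\cdot g$ is likewise open and dense in $V$; two open dense subsets of the irreducible variety $V$ meet, so $G\cdot f=G\cdot g$, the elements $f$ and $g$ are $G$-conjugate, and $T_f\cong T_g$. Thus there is at most one two-term tilting complex with terms $P_1$ and $P_0$. Observe that we used \emph{only} the self-orthogonality $\operatorname{Hom}(T,T[1])=0$, which is one of the defining properties of a tilting complex.

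The step I expect to be the main obstacle is the implication ``the tangent space of the orbit equals all of $V$ $\Rightarrow$ the orbit is dense'': this rests on orbits of algebraic group actions being smooth together with the ensuing dimension count, and hence on $k$ being algebraically closed and $V$ finite-dimensional — both of which hold here, since $A$ is a finite-dimensional $k$-algebra and $P_1,P_0$ are finitely generated. In module-theoretic terms this step says that a tilting complex admits no proper degeneration to a complex with the same terms, which is essentially the content of \cite[Corollary 8]{JensenXuZDegenerations} and could equally well be invoked directly.
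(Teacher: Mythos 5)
The paper gives no proof of this statement; it is imported verbatim from the cited reference (Jensen--Su--Zimmermann, Corollary~8). Your argument is correct and, as you yourself note at the end, reconstructs exactly the degeneration-theoretic mechanism behind that corollary: the self-orthogonality $\operatorname{Hom}_{\mathcal{K}^b}(T_f,T_f[1])=0$ is reinterpreted as the vanishing of $V/(f\operatorname{End}_A(P_1)+\operatorname{End}_A(P_0)f)$, which is precisely the normal space to the $G$-orbit, so each tilting complex with terms $P_1,P_0$ gives an open dense $G$-orbit in $V=\operatorname{Hom}_A(P_1,P_0)$, and two such orbits must coincide. The only assumptions you lean on beyond those in the statement — $A$ finite-dimensional, $P_i$ finitely generated, $k$ algebraically closed — are all in force in the paper's setting (and in \cite{JensenXuZDegenerations}), so this is a valid self-contained substitute for the citation.
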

On top of that we are going to use some facts from \cite{DerCatDerEq}, \cite{RouquierZimmermann} and \cite{DerEq}, which are formulated for an
$R$-algebra $A$ which is projective as an $R$-module, where $R$ is an arbitrary commutative ring. 
These facts can therefore be applied to finite dimensional $k$-algebras and $\OO$-orders equally.
Namely, if we have a one-sided
tilting complex $T\in \mathcal K^b(\projC-A)$ whose endomorphism ring is isomorphic to $B$, then there is a two-sided tilting complex $X\in \mathcal D^b(\modC-B^{\op} \otimes_R A)$ whose restriction to $A$ is isomorphic to $T$ in the derived category (see \cite[Corollary 3.5]{DerCatDerEq}).
Moreover, by \cite[Lemma 2.2]{RouquierZimmermann} such an $X$ can be chosen in such a way that the restriction of each term of $X$ to both $A$ and $B$ is projective (without necessarily being projective as a $B$-$A$-bimodule). An equivalence between $\mathcal D^b(B)$ and $\mathcal D^b(A)$ is then afforded by the functor $-\otimes_{B}^{\mathbb L} X$. If we choose $X$ is such a way that all its terms are projective as $B$-modules, then we may replace the left derived tensor product by the ordinary tensor product of complexes. 
We denote the inverse of $X$ by $X^\vee$. We have $X\otimes_A^{\mathbb L} X^\vee \iso B$ in $\mathcal D^b(B^{\op} \otimes_R B)$ and $X^\vee \otimes_B^{\mathbb L} X \iso A$ in $\mathcal D^b(A^{\op} \otimes_R A)$. For symmetric algebras $A$ the complex $X^\vee$ can be computed as $\Hom_R(X, R)$ (see \cite[Section 9.2.2]{DerEq}). 

In the case of self-injective algebras it is fairly easy to check whether a derived equivalence is actually a Morita-equivalence. A one-sided tilting complex $T$ over a self-injective algebra $A$ is always isomorphic 
(in $\mathcal D^b(A)$) to a tilting complex whose highest and lowest degree non-zero terms
are in the same degree as its highest and lowest degree non-zero homologies (as both epimorphisms onto projectives and embeddings of projectives split in this case). In particular, a 
one-sided tilting complex which has non-zero homology only in a single degree is isomorphic to the stalk complex
of a projective module (which has to be a progenerator), and its endomorphism ring in $\mathcal D^b(A)$ is isomorphic to the endomorphism ring of that projective module. It follows that $A$ is Morita equivalent to $\End_{\mathcal D^b(A)}(T)$. If there is a two-sided tilting complex for two algebras which has homology concentrated in a single degree, then these algebras are Morita-equivalent because the restriction of said tilting complex to either side is
isomorphic to a one-sided tilting complex, and isomorphisms in $\mathcal D^b(A)$ preserve homology (pretty much by definition). 

We should also note that if $A$ is a symmetric $k$-algebra, and $B$ is the endomorphism ring of a two-term tilting complex over $A$, then $A$ is also the endomorphism ring of a two-term tilting complex over $B$. This follows simply from the fact that
$X^\vee$ can be computed as $\Hom_k(X, k)$, and the fact that we can choose a one-sided tilting complex in such a way that its non-zero terms are concentrated between the highest and lowest degree non-zero homology.

If we have a one-sided tilting complex $T$ over a symmetric $\OO$-order $\Lambda$, then its endomorphism ring $\Gamma$ is an $\OO$-order by \cite{TiltedSymmIsSymm}, and by \cite[Theorem 2.1 and Corollary 2.2]{DerCatDerEq} the fact that $\Gamma$ is an $\OO$-order implies that $k\otimes T$ is a tilting complex over $k\otimes \Lambda$ with endomorphism ring
$k\otimes \Gamma$. If $k\otimes T$ has homology concentrated in a single degree then it is isomorphic to the stalk complex of a progenerator $P$ in $\modC-k\otimes \Lambda$. We know that there is a progenerator $\widehat P$ of $\modC-\Lambda$ with
$k\otimes \widehat P \iso P$. Hence the stalk complex associated with $\widehat P$ is a tilting complex over $\Lambda$ reducing to the stalk complex associated with $P$. By Theorem \ref{thm_rickard_lifting} such a complex is unique
up to (quasi-)isomorphism, and therefore $T$ must be isomorphic to the stalk complex associated with $\widehat P$, which means that $\Gamma$ is Morita-equivalent to $\Lambda$. 

Assuming $A$ is a symmetric $R$-algebra we can also give an explicit description of two-sided tilting complexes with homology concentrated in a single degree.
Namely, such a complex $X$ is quasi-isomorphic to the stalk-complex of its non-zero homology, which we will denote by $M$. Since we also know that its restriction to either side is quasi-isomorphic to a one-sided tilting complex, which under the assumptions made is quasi-isomorphic to the stalk complex of a progenerator, it follows that $M$ is projective as a left and as a right module. Of course we can do the same for $X^\vee$, which must be quasi-isomorphic to some $A$-$B$-bimodule $M^\vee$, also projective from the left and from the right. Due to projectivity it follows that we do not have to bother with the derived tensor product (as discussed above), and we can conclude that $M\otimes_A M^\vee \iso B$ as a $B$-$B$-bimodule and $M^\vee \otimes_B M \iso A$ as an $A$-$A$-bimodule. That is, $M$ is an invertible bimodule. 
In the case where $A$ is equal to $B$ and $A$ is a basic $k$-algebra or a basic $\OO$-order we can go even further:
in that case $M\iso A^\alpha$ for some automorphism $\alpha$ of $A$.

\begin{prop}\label{prop_tilted_admissible}
	Let $\Lambda$ be an admissible $\OO$-order, and let $T\in \mathcal K^b(\projC-\Lambda)$ be a tilting complex.
	Then $\Gamma := \End_{\mathcal D^b(\Lambda)}(T)$ is an admissible $\OO$-order.
\end{prop}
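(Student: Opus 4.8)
The plan is to verify the three conditions in Definition \ref{defi_admissible} for $\Gamma$ one at a time, using the general facts about derived equivalences collected above. The first two conditions are quick. Since $T$ is a one-sided tilting complex over the symmetric $\OO$-order $\Lambda$, its endomorphism ring $\Gamma$ is again a symmetric $\OO$-order by \cite{TiltedSymmIsSymm} and \cite{RickardLiftTilting} (Theorem \ref{thm_rickard_lifting}), which handles condition 2 and the $\OO$-order part. For condition 1, a derived equivalence induces an isomorphism $Z(K\otimes \Gamma)\iso Z(K\otimes \Lambda)$ of centres, and since $K\otimes\Lambda$ is split semisimple, so is $K\otimes \Gamma$; concretely, $K\otimes T$ is a tilting complex over the semisimple algebra $K\otimes\Lambda$, hence splits as a direct sum of stalk complexes of simple modules, and its endomorphism ring $K\otimes \Gamma$ is a product of matrix algebras over $K$.

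The substance of the proof is condition 3: producing the isometry $\widehat\Phi_\Gamma$ and the centre isomorphism $\Phi_\Gamma$ for $\Gamma$ that satisfy the hypotheses of section \ref{section_fix_wedderburn}, starting from the corresponding data $\widehat\Phi_\Lambda$, $\Phi_\Lambda$ for $\Lambda$ (which exist because $\Lambda$ is admissible). The idea is to compose with the maps induced by the derived equivalence. Using the discussion preceding the proposition, choose a two-sided tilting complex $X\in\mathcal D^b(\Gamma^{\op}\otimes_\OO\Lambda)$ restricting to $T$, with all terms projective on both sides. Then $-\otimes_\Gamma^{\mathbb L}X$ induces an equivalence $\mathcal D^b(\Gamma)\to\mathcal D^b(\Lambda)$, hence, after applying $K\otimes-$, an isometry $\rho: K_0(K\otimes\Gamma)\to K_0(K\otimes\Lambda)$ on Grothendieck groups (the Euler form is preserved because $X$ is a two-sided tilting complex, so $\Hom$-spaces and hence the bilinear form are matched up to the usual sign conventions). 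Moreover $\rho$ carries the lattice generated by projective $\Gamma$-modules onto the lattice generated by projective $\Lambda$-modules, since a derived equivalence sends $\mathcal K^b(\projC-\Gamma)$ to $\mathcal K^b(\projC-\Lambda)$. Its restriction to the centre, via the identification $Z(K\otimes \Gamma)\iso Z(\mathcal D^b(K\otimes \Gamma))$, gives a $K$-algebra isomorphism $Z(K\otimes\Gamma)\to Z(K\otimes \Lambda)$ which restricts to an isomorphism $\zeta: Z(\Gamma)\to Z(\Lambda)$ of $\OO$-orders (the integral structures are matched because $Z(\Gamma)$ and $Z(\Lambda)$ are the unique maximal orders—indeed the full centres—inside their semisimple $K$-spans, and the isomorphism of $K$-spans must carry one onto the other). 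Now set $\Phi_\Gamma := \Phi_\Lambda\circ\zeta$ and $\widehat\Phi_\Gamma := \widehat\Phi_\Lambda\circ\rho$. Both are isometries/isomorphisms onto the data for $B_0(\OO\SL_2(q))$, and the compatibility conditions (a) and (b) from section \ref{section_fix_wedderburn} are preserved under composition: (b) holds because both $\rho$ and $\widehat\Phi_\Lambda$ match up the sublattices generated by projectives; and (a)—that $\widehat\Phi_\Gamma$ sends each class $[V]$ of a simple to $\pm[W]$ where $W$ is the simple belonging to $(\id_K\otimes\Phi_\Gamma)(\eps)$—follows because $\rho$ already has this shape relative to $\zeta$ (an isometry between split semisimple Grothendieck groups that restricts to the centre isomorphism $\zeta$ must send simples to signed simples respecting $\zeta$), and $\widehat\Phi_\Lambda$ has it relative to $\Phi_\Lambda$, so the composite has it relative to $\Phi_\Lambda\circ\zeta=\Phi_\Gamma$.

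The main obstacle I expect is the bookkeeping in the previous paragraph: carefully checking that the isometry $\rho$ induced by $-\otimes^{\mathbb L}_\Gamma X$ genuinely restricts to the centre isomorphism $\zeta$ in a way compatible with the Wedderburn decomposition, i.e. that the square relating "$\rho$ on $K_0$" and "$\zeta$ on centres" commutes. This is where one has to be precise about how a two-sided tilting complex identifies $Z(K\otimes\Gamma)$ with $Z(K\otimes\Lambda)$ and how that identification acts on primitive central idempotents; concretely one uses that for a semisimple algebra the centre is canonically $\prod_V\End(V)\cap Z = \prod_V K$, indexed by simples, and that $-\otimes^{\mathbb L}_\Gamma X$ permutes these factors exactly according to the induced (signed) permutation of simples on $K_0$. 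Once that compatibility is in hand, everything else is formal composition. One small extra point to note explicitly: the hypotheses of section \ref{section_fix_wedderburn} also fix the ordering of the rows of the decomposition matrix via the Wedderburn embeddings, but condition 3 of Definition \ref{defi_admissible} only asks for the decomposition matrix up to permutation of rows and columns together with the existence of $\widehat\Phi$, $\Phi$; since $\Lambda$ satisfies condition 3 of section \ref{section_fix_wedderburn}'s assumptions and $\rho$, $\zeta$ are compatible, $\Gamma$ does too, so no reordering subtlety arises.
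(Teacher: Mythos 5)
Your overall strategy is the same as the paper's: verify the three conditions of Definition \ref{defi_admissible} for $\Gamma$, with conditions 1 and 2 being quick consequences of \cite{TiltedSymmIsSymm} and the behaviour of tilting complexes over semisimple algebras, and condition 3 obtained by composing the data $\widehat\Phi$, $\Phi$ for $\Lambda$ with an isometry $\rho$ and a centre isomorphism $\zeta$ coming from a two-sided tilting complex $X$. Your construction of $\rho$ via $[V]\mapsto [V\otimes_{K\otimes\Gamma}(K\otimes X)]$ and the argument that it matches up the lattices generated by projectives are exactly what the paper does.

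There is, however, a genuine gap in the way you produce the $\OO$-order isomorphism $\zeta: Z(\Gamma)\to Z(\Lambda)$. You first build a $K$-algebra isomorphism $Z(K\otimes\Gamma)\to Z(K\otimes\Lambda)$ and then assert that it "restricts to an isomorphism $\zeta: Z(\Gamma)\to Z(\Lambda)$ of $\OO$-orders (the integral structures are matched because $Z(\Gamma)$ and $Z(\Lambda)$ are the unique maximal orders \dots inside their semisimple $K$-spans)." This justification is false: $Z(K\otimes\Gamma)$ is a split commutative semisimple $K$-algebra, i.e.\ $K^{6+l}$, whose unique maximal order is $\OO^{6+l}$, and the centre of a block (or of any symmetric $\OO$-order of interest here) is generally a strictly smaller order, not maximal. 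So an abstract $K$-algebra isomorphism of the centres has no reason to carry $Z(\Gamma)$ onto $Z(\Lambda)$, and your step 2 does not follow. The correct route is the one the paper takes: the \emph{integral} derived equivalence already furnishes an isomorphism $Z(\Gamma)\iso Z(\Lambda)$ of $\OO$-algebras directly, by identifying both with $\End_{\mathcal D^b(\Lambda\otimes\Gamma^{\op})}(X)$; one then extends scalars to get the $K$-version, rather than the other way around. This also makes the compatibility with $\rho$ transparent, because the identification gives the concrete relation $x\cdot\Psi(z)=z\cdot x$ for $x\in H^i(X)$ and $z\in Z(\Gamma)$, from which one reads off how the primitive central idempotents move under $\Psi$ and checks it agrees with the signed permutation on simples induced by $\rho$. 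Your appeal to "an isometry restricting to $\zeta$ must send simples to signed simples respecting $\zeta$" is the right intuition, but without the explicit description of $\Psi$ you have not actually shown that $\rho$ "restricts" to $\zeta$ in the required sense.
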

\begin{proof}
	Since $\Lambda$ is symmetric, $\Gamma$ is both an $\OO$-order and symmetric (see \cite{TiltedSymmIsSymm}).
	Moreover, $K\otimes T$ is a tilting complex over $K\otimes \Lambda$ with endomorphism ring $K\otimes \Gamma$ (this is elementary), and since $K\otimes \Lambda$ is assumed to be
	split semisimple, so is $K\otimes \Gamma$ (since two semisimple algebras are derived equivalent if and only if they are Morita equivalent). 
	
	It remains to find an isometry $\widehat \Psi:\ K_0(K\otimes \Gamma) \longrightarrow K_0(K\otimes \Lambda)$ 
	mapping the sublattice of $K_0(K\otimes \Gamma)$ generated by $K$-spans of projective $\Gamma$-modules
	onto the analogously defined sublattice of $K_0(K\otimes \Lambda)$. Moreover there should be an isomorphism $\Psi:\ Z(\Gamma) \longrightarrow Z(\Lambda)$ whose $K$-linear extension maps a primitive idempotent $\eps_V$ in $Z(K\otimes \Gamma)$ to $\eps_W$, the primitive idempotent in $Z(K\otimes \Lambda)$ that belongs to 
	$[W] = \pm \widehat \Psi ([V])$. Once we have found these maps, it is immediate that the composition
	of $\widehat \Psi$ respectively $\Psi$ with the maps $\widehat \Phi$ respectively $\Phi$ from the definition of admissibility (of $\Lambda$) yields the maps needed for $\Gamma$ to be admissible.
	First let us choose a two-sided tilting complex $X$ with terms that are projective
	as left $\Gamma$-modules and as right $\Lambda$-modules such that $-\otimes_{\Gamma} X$ affords an equivalence between $\mathcal D^b(\modC-\Gamma)$ and $\mathcal D^b(\modC-\Lambda)$. We choose $\widehat \Psi$ to be the induced map from 
	$K_0(K\otimes \Gamma)$ to $K_0(K\otimes \Lambda)$, that is,  $[V]\mapsto [V\otimes _{K\otimes \Gamma} (K \otimes X)]$. This map $\widehat \Psi$ is an isometry since $K\otimes X$ is a two-sided tilting complex, and it maps the sublattice of $K_0(K\otimes \Lambda)$ spanned by 
	the projective modules into the sublattice of $K_0(K\otimes \Gamma)$ spanned by projectives. The reason for the latter is simply that if 
	$P$ is projective, then all terms of $P\otimes_\Gamma X$ are projective, and the homomorphism between 
	the Grothendieck groups is defined by applying $-\otimes_\Gamma X$ and then taking the alternating sum 
	of the terms. Since the same argument applies to $-\otimes_{K\otimes \Lambda}(K\otimes X^\vee)$, it also follows that $\widehat \Psi$ maps the sublattice generated by projective $\Gamma$-modules surjectively onto the sublattice generated by projective $\Lambda$-modules.
	
	To get the required isomorphism between the centers we first note that it is well known that the centers of derived equivalent algebras are isomorphic. 
	Concretely, an isomorphism $\Psi: Z(\Gamma)\longrightarrow Z(\Lambda)$ can be obtained by identifying both $Z(\Gamma)$ and $Z(\Lambda)$ with the ring of
	endomorphisms of $X$ (in $\mathcal D^b(\Lambda\otimes \Gamma^{\op})$). That means in particular that $x \cdot \Psi(z)=z\cdot x$ for all $x\in H^i(X)$ ($i$ arbitrary) and all $z \in Z(\Gamma)$. 
	This implies that if an idempotent $\eps \in Z(K\otimes \Gamma)$ acts non-trivially on 
	a simple $K\otimes \Gamma$-module $V$, then the image of $\eps$ under the $K$-linear extension of $\Psi$ 
	acts non-trivially on $V\otimes_{K\otimes \Gamma} (K\otimes X)$, which by definition becomes $\pm\widehat \Psi([V])$ in the Grothendieck group. This shows that $\widehat \Psi$ and $\Psi$ have the required properties.
\end{proof}

\begin{prop}\label{prop_lift_transfer}
	Assume that $\bar \Lambda$ is a $k$-algebra and $\Lambda_0$  is an admissible lift of $\bar{\Lambda}$ such that the following hold: 
	\begin{enumerate}
	\item If $\Lambda$ is an arbitrary  admissible lift 
	of $\bar \Lambda$, then $\Lambda\iso \Lambda_0$.
	\item Every automorphism of $K_0(k\otimes \Lambda_0)$ which is induced by some element of 
	$\Aut_k(k\otimes \Lambda_0)$ is also induced by an element of $\Aut_\OO(\Lambda_0)$. 
	\end{enumerate}
	Then the following holds for every basic $k$-algebra $\bar \Lambda'$  which is derived equivalent to $\bar \Lambda$ 
	by means of a two-term tilting complex:
	There is an admissible lift $\Lambda_0'$ of $\bar \Lambda'$ such that  if $\Lambda'$ is an arbitrary  admissible lift 
	of $\bar \Lambda'$, then $\Lambda'\iso \Lambda_0'$.
\end{prop}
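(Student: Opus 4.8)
The plan is to construct $\Lambda_0'$ as the endomorphism ring of a lifted two-term tilting complex, and then to identify an arbitrary admissible lift $\Lambda'$ of $\bar\Lambda'$ with it by running a ``round trip'' from $\Lambda'$ to $\Lambda_0$ and back, invoking Rickard's uniqueness of lifts of tilting complexes (Theorem~\ref{thm_rickard_lifting}) to compare the two ends. The only place where anything delicate happens will be in matching up the two resulting two-term tilting complexes over $\bar\Lambda$, and that is precisely where hypothesis~(2) comes in.

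I would first record that $\bar\Lambda=k\otimes\Lambda_0$ is symmetric ($\Lambda_0$ being admissible, hence symmetric), and hence so is every algebra derived equivalent to it; in particular $\bar\Lambda'$ is symmetric. Using the remark preceding Proposition~\ref{prop_tilted_admissible} (if $A$ is symmetric and $B$ is the endomorphism ring of a two-term tilting complex over $A$, then $A$ is the endomorphism ring of a two-term tilting complex over $B$), together with replacing a tilting complex by a basic version if necessary, the hypothesis that $\bar\Lambda'$ is derived equivalent to $\bar\Lambda$ via a two-term tilting complex yields a two-term tilting complex $\bar T\in\mathcal K^b(\projC-\bar\Lambda)$ with $\End_{\mathcal D^b(\bar\Lambda)}(\bar T)\iso\bar\Lambda'$. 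Fixing an isomorphism $k\otimes\Lambda_0\iso\bar\Lambda$, I would apply Theorem~\ref{thm_rickard_lifting} to lift $\bar T$ to a two-term tilting complex $\widehat T\in\mathcal K^b(\projC-\Lambda_0)$ with $k\otimes\widehat T\iso\bar T$; then $\Lambda_0':=\End_{\mathcal D^b(\Lambda_0)}(\widehat T)$ is an $\OO$-order with $k\otimes\Lambda_0'\iso\bar\Lambda'$, and it is admissible by Proposition~\ref{prop_tilted_admissible}, so it is an admissible lift of $\bar\Lambda'$.

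For the uniqueness statement, let $\Lambda'$ be an arbitrary admissible lift of $\bar\Lambda'$ and fix $k\otimes\Lambda'\iso\bar\Lambda'$. I would take $\bar S$ to be the restriction to $\bar\Lambda'$ of the inverse of the two-sided tilting complex attached to $\bar T$: a two-term tilting complex over $\bar\Lambda'$ with endomorphism ring isomorphic to $\bar\Lambda$, whose own inverse restricts on $\bar\Lambda$ back to $\bar T$. Lifting $\bar S$ by Theorem~\ref{thm_rickard_lifting} gives a two-term tilting complex $\widehat S$ over $\Lambda'$, and by Theorem~\ref{thm_rickard_lifting} together with Proposition~\ref{prop_tilted_admissible} the order $\Xi:=\End_{\mathcal D^b(\Lambda')}(\widehat S)$ is an admissible lift of $\bar\Lambda$; by hypothesis~(1) it is isomorphic to $\Lambda_0$, so I may fix $\theta\colon\Xi\to\Lambda_0$. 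Regarding $\widehat S$ as a tilting complex with endomorphism ring $\Lambda_0$, passing to a two-sided tilting complex with terms projective on both sides and applying $\Hom_\OO(-,\OO)$, I obtain a two-term tilting complex $\widehat S^*\in\mathcal K^b(\projC-\Lambda_0)$ with $\End_{\mathcal D^b(\Lambda_0)}(\widehat S^*)\iso\Lambda'$ (this is the $\OO$-order analogue of the duality discussion preceding Proposition~\ref{prop_tilted_admissible}, using that $\OO$-duals of projectives over symmetric orders are again projective). It then suffices to prove $\widehat S^*\iso\widehat T$, because that gives $\Lambda'\iso\End(\widehat S^*)\iso\End(\widehat T)=\Lambda_0'$.

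Both $k\otimes\widehat S^*$ and $k\otimes\widehat T\iso\bar T$ are two-term tilting complexes over $\bar\Lambda$ with endomorphism ring isomorphic to $\bar\Lambda'$, so by Theorem~\ref{thm_jxz} they are isomorphic once they have the same terms, whereupon $\widehat S^*\iso\widehat T$ follows from the uniqueness in Theorem~\ref{thm_rickard_lifting}. Unwinding the construction gives $k\otimes\widehat S^*\iso\bar T^{\alpha}$ (the complex $\bar T$ with $\bar\Lambda$ acting through $\alpha$), where $\alpha\in\Aut_k(\bar\Lambda)$ measures the discrepancy between the natural identification $k\otimes\Xi\iso\bar\Lambda$ (coming from $k\otimes\widehat S\iso\bar S$ and the two-sided structure of the inverse of $\bar T$) and the isomorphism $k\otimes\theta$; since $\theta$ is only determined up to an element of $\Aut_\OO(\Lambda_0)$, the automorphism $\alpha$ is only determined modulo the image of $\Aut_\OO(\Lambda_0)$ in $\Aut_k(\bar\Lambda)$. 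This is exactly where hypothesis~(2) is used: the automorphism of $K_0(\bar\Lambda)$ induced by $\alpha$ is also induced by some element of $\Aut_\OO(\Lambda_0)$, so after composing $\theta$ with a suitable automorphism of $\Lambda_0$ I may assume that $\alpha$ acts trivially on $K_0(\bar\Lambda)$, i.e. fixes the isomorphism class of every projective $\bar\Lambda$-module; then $\bar T^{\alpha}$ has the same terms as $\bar T$, so $\bar T^{\alpha}\iso\bar T$ by Theorem~\ref{thm_jxz}, and the argument closes. I expect this last identification to be the main obstacle: the two round trips agree over $\bar\Lambda$ only up to an algebra automorphism $\alpha$ which need not lift to $\Lambda_0$, and it is precisely Theorem~\ref{thm_jxz} (reducing the problem to the $K_0$-action of $\alpha$) combined with hypothesis~(2) that makes things work. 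Keeping track of two-term-ness of all the complexes involved, and of the behaviour of endomorphism rings and of $\OO$-duals of projectives, is routine bookkeeping of the sort already carried out in the discussion before Proposition~\ref{prop_tilted_admissible}.
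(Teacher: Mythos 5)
Your proof is correct and uses the same key ingredients as the paper (Proposition~\ref{prop_tilted_admissible}, hypotheses~(1) and~(2), Theorem~\ref{thm_jxz}, Theorem~\ref{thm_rickard_lifting}) and the same core mechanism: transport the problem to the $\bar\Lambda$ side, recognize the obstruction as an algebra automorphism of $\bar\Lambda$ that need not lift, and neutralize its action on $K_0$ by combining hypothesis~(2) with Theorem~\ref{thm_jxz}. The organization differs in a way worth noting. The paper takes two arbitrary admissible lifts $\Lambda_1',\Lambda_2'$ of $\bar\Lambda'$, runs both round trips, composes the three two-sided complexes $({}^{\alpha_1}X_1)^\vee\otimes_{\Lambda_0}\Lambda_0^\gamma\otimes_{\Lambda_0}{}^{\alpha_2}X_2$ and reads off a Morita equivalence from homology being concentrated in a single degree; it never explicitly constructs $\Lambda_0'$, so its conclusion is strictly a uniqueness statement. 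You instead fix the candidate $\Lambda_0'=\End_{\mathcal D^b(\Lambda_0)}(\widehat T)$ explicitly, reduce each admissible lift $\Lambda'$ to the comparison of two two-term tilting complexes $\widehat S^*$ and $\widehat T$ over $\Lambda_0$, and close with Rickard's uniqueness of lifts, which cuts out most of the bimodule bookkeeping. One point you gloss over in ``unwinding the construction'': the automorphism $\alpha$ you identify has two sources, the freedom in $\theta$ \emph{and} the ambiguity in the choice of a two-sided tilting complex $\widehat Y$ restricting to $\widehat S$; but once $\widehat Y$ is fixed the latter contribution is a fixed automorphism, so the net discrepancy is still well defined modulo the image of $\Aut_\OO(\Lambda_0)$ in $\Aut_k(\bar\Lambda)$, and hypothesis~(2) applies exactly as you say. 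So the argument goes through.
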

\begin{proof}
	Let $T$ be a two-term tilting complex over $\bar \Lambda'$ with $\End_{\mathcal D^b(\bar \Lambda')}(T)\iso \bar \Lambda$. Then, for any admissible lift $\Lambda'$ of $\bar \Lambda'$ there exists a tilting complex $\widehat T\in \mathcal K^b(\projC-\Lambda)$ such that $k\otimes \widehat T \iso T^{\varphi}$, where $\varphi: k\otimes \Lambda'
	\stackrel{\sim}{\longrightarrow} \bar \Lambda'$ is an isomorphism. Define 
	$\Lambda$ to be $\End_{\mathcal D^b(\Lambda')}(\widehat T)$, and let $X$ denote a two-sided tilting complex in
	$\mathcal D^b(\Lambda^{\op} \otimes \Lambda')$ whose restriction to 
	the right is isomorphic to $\widehat T$. We assume without loss that all terms of $X$ are projective as left $\Lambda$-modules and as right $\Lambda'$-modules.
	
	We let $\Lambda'_1$ and $\Lambda'_2$ be arbitrary admissible lifts of $\bar\Lambda'$. Let $\varphi_1$, $\varphi_2$, $X_1$, $X_2$, $\Lambda_1$ and $\Lambda_2$ be constructed as above. Then $\Lambda_1$ and $\Lambda_2$ are both admissible lifts of $\bar \Lambda$ by Proposition \ref{prop_tilted_admissible}, and therefore they are both isomorphic to $\Lambda_0$ by assumption.
	For $i\in\{1,2\}$ let $\alpha_i:\ \Lambda_0 \stackrel{\sim}{\longrightarrow} \Lambda_i$ be an isomorphism. 
	Then ${^{\alpha_i}}X_i$ for $i\in \{1,2\}$ are two-sided tilting complexes in $\mathcal D^b(\Lambda_0^{\op} \otimes \Lambda_i')$. The restriction to the right of $k\otimes {{^{\alpha_i}}X_i}$ is of course still isomorphic to $T^{\varphi_i}$, and therefore we have the following isomorphisms in $\mathcal D^b(k\otimes \Lambda_0)$
	\begin{equation}
		\underbrace{(k\otimes {^{\alpha_1}X_1})\otimes_{k\otimes \Lambda_1'} {^{\varphi_1}\bar{\Lambda'}^{\varphi_2}}}_{\iso T^{\varphi_2} \iso {k\otimes {^{\alpha_2}X_2}}}
		\otimes_{k\otimes \Lambda_2'} (k\otimes {^{\alpha_2}X_2})^{\vee} \iso k\otimes \Lambda_0
	\end{equation}
	Of course this is only a (quasi-)isomorphism of complexes of left modules, but we can still deduce that 
	the left hand side has homology concentrated in a single degree. This implies that 
	$(k\otimes {^{\alpha_1}X_1})\otimes_{k\otimes \Lambda_1'} {^{\varphi_1}\bar{\Lambda}'^{\varphi_2}}\otimes_{k\otimes \Lambda_2'} (k\otimes {^{\alpha_2}X_2})^{\vee}$ is isomorphic to
	$(k\otimes \Lambda_0)^\beta$ for some automorphism $\beta$ of $k\otimes \Lambda_0$. Hence we get
	\begin{equation}\label{eqn_jksjksdd}
		{^{\varphi_1}\bar{\Lambda}'^{\varphi_2}} \iso (k\otimes {^{\alpha_1}X_1})^\vee\otimes_{k\otimes \Lambda_0}
		(k\otimes \Lambda_0)^\beta \otimes_{k\otimes \Lambda_0} (k\otimes {^{\alpha_2}X_2})
	\end{equation}
	which is now a quasi-isomorphism of two-sided complexes.
	By assumption there exists an automorphism $\gamma\in \Aut(\Lambda_0)$ which induces the same action on
	$K_0(k\otimes \Lambda_0)$ as $\beta$. In particular $(k\otimes {^{\alpha_1}X_1})^\vee\otimes_{k\otimes \Lambda_0}
	(k\otimes \Lambda_0)^\beta \iso (k\otimes {^{\alpha_1}X_1})^\vee\otimes_{k\otimes \Lambda_0}(k\otimes \Lambda_0^\gamma)$
	in $\mathcal D^b(\Lambda_0)$ (i. e. again forgetting about the left action), since $(k\otimes {^{\alpha_1}X_1})^\vee$ restricted to the right is a two-term tilting complex, and these are determined by their terms (and $\beta$ and $\id_k\otimes \gamma$ act on these terms in the same way by definition). This implies that if we replace 
	$(k\otimes \Lambda_0)^\beta$ by $k\otimes \Lambda_0^\gamma$ in the right hand side of \eqref{eqn_jksjksdd}, we still get a complex with 
	homology concentrated in a single degree. Moreover, since all involved complexes have terms which are projective 
	as both left and right modules, we have
	\begin{equation}
		(k\otimes {^{\alpha_1}X_1})^\vee\otimes_{k\otimes \Lambda_0}
		(k\otimes \Lambda_0^\gamma) \otimes_{k\otimes \Lambda_0} (k\otimes {^{\alpha_2}X_2}) \iso 
		k\otimes (({^{\alpha_1} X_1)^\vee \otimes_{\Lambda_0} \Lambda_0^\gamma \otimes_{\Lambda_0} {^{\alpha_2} X_2}})
	\end{equation}
	in $\mathcal D^b(\Lambda_1'^{\op}\otimes \Lambda_2')$. We conclude that $Y := ({^{\alpha_1} X_1)^\vee \otimes_{\Lambda_0} \Lambda_0^\gamma \otimes_{\Lambda_0} {^{\alpha_2} X_2}}$ is a two-sided tilting complex all of whose terms are projective from the left and from the right such that $k\otimes Y$ is isomophic to a stalk complex. By the properties of tilting complexes we revisited above we can conclude that $Y$ actually affords a Morita equivalence. As $\Lambda_1'$ and $\Lambda_2'$ are basic this implies $\Lambda_1'\iso \Lambda_2'$.
\end{proof}

\begin{proof}[Proof of Theorem \ref{thm_main}]
	First we need to show that (the basic order of) a block of quaternion defect of $\OO G$ with three simple modules
	is admissible in the sense of Definition \ref{defi_admissible}. It is clearly symmetric and it has split semisimple 
	$K$-span by Proposition \ref{prop_splitting}. That takes care of the first two properties required for admissibility.
	
	In \cite[Definition following Corollary 2.7]{Olsson}, the blocks of quaternion defect are divided into three different cases, labeled ``(aa)'', ``(ab)'' and ``(bb)''.  The blocks with three simple modules correspond to the case ``(aa)'' (see \cite[table on page 231]{Olsson}). By \cite[Theorem 1]{CabanesPicaronny} there is a perfect isometry between any two blocks of quaternion defect with the same label (i. e. either ``(aa)'', ``(ab)'', or ``(bb)''). In particular, there is a perfect isometry between any block of quaternion defect with three simple modules and the principal block of
	$\OO \SL_2(q)$ for an appropriately chosen $q$. A perfect isometry gives rise to an isomorphism between centers 
	and an isometry between Grothendieck groups satisfying the required properties of $\Phi$ and $\widehat{\Phi}$. This takes care of the third property required in the definition of admissibility. 
	It follows that  $\Lambda$ and $\Gamma$ are both admissible, and so are their basic orders. 
	
	By \cite[Chapter IX]{TameClass} there are only three possible basic algebras for $k\otimes \Lambda$ respectively 
	$k\otimes \Gamma$ for any fixed generalized quaternion defect group. These are the algebras $Q(3\mathcal A)^c_2$, $Q(3\mathcal B)^c$ and $Q(3\mathcal K)^c$. Hence the basic algebras of $k\otimes \Lambda$ and $k\otimes \Gamma$ each are isomorphic to one of those (for appropriate $c$). Any two admissible lifts of 
	$Q(3\mathcal K)^c$ are isomorphic by Lemma \ref{lemma_q3k}. The algebra $Q(3\mathcal A)_2^c$ is derived equivalent to $Q(3\mathcal K)^c$ by means of a two-term tilting complex, and Corollary \ref{corollary_full_permutation_auto}  implies that the second condition of Proposition \ref{prop_lift_transfer} is satisfied for the (unique) admissible lift
	of $Q(3\mathcal K)^c$, and hence Proposition \ref{prop_lift_transfer} yields that any two admissible lifts of $Q(3\mathcal A)_2^c$ are isomorphic. By comparing Cartan matrices one sees that $B_0(\OO \SL_2(q))$ for an appropriately chosen $q\equiv 1 \mod 4$ is a lift of $Q(3\mathcal A)_2^c$, and hence Corollary \ref{corollary_full_permutation_auto} and Proposition \ref{prop_lift_transfer} can be applied again. It follows that any two admissible lifts of $Q(3\mathcal B)^c$ are isomorphic. This completes the proof of the first assertion.
	
	The second assertion follows from the fact that the algebras $k\otimes\Lambda$ and $k\otimes \Gamma$ are derived equivalent, together with Theorem \ref{thm_rickard_lifting}, Proposition \ref{prop_tilted_admissible} and the uniqueness of admissible lifts we just showed.
\end{proof}

\paragraph{Acknowledgments} 
This research was supported by the Research Foundation Flanders (FWO - Vlaanderen) project G.0157.12N.

\bibliographystyle{plain}
\bibliography{refs}{}

\end{document}